\DeclarePairedDelimiter{\ip}{\langle}{\rangle}
\newtheorem{assumption}[theorem]{Assumption}
\DeclareMathOperator{\Per}{Per}
\DeclareMathOperator{\dist}{dist}
\newcommand{\tmix}{t_\mathrm{mix}}
\newcommand{\tdis}{t_\mathrm{dis}}
\newcommand{\TV}{\mathrm{TV}}
\newcommand{\cyl}{\mathcal{C}}
\newcommand{\zerotuple}{{\bm{0}}}
\newcommand{\Phigh}{\mathcal P_H}
\newcommand{\Plow}{\mathcal P_L}
\begin{document}
\title[Using Bernoulli maps to accelerate mixing]{Using Bernoulli maps to accelerate mixing of a random walk on the torus}

\author[Iyer]{Gautam Iyer}
\address{%
    Department of Mathematical Sciences,
    Carnegie Mellon University,
Pittsburgh, PA 15213.}
\email{gautam@math.cmu.edu}
\author[Lu]{Ethan Lu}
\address{%
    Department of Mathematics,
    Stanford University,
Stanford, CA 94305.}
\email{ethanlu@stanford.edu}
\author[Nolen]{James Nolen}
\address{%
    Department of Mathematics,
    Duke University,
Durham, NC 27708.}
\email{nolen@math.duke.edu}
\thanks{%
  This work has been partially supported by
  the National Science Foundation under
  grants DMS-2108080 and DGE-2146755, 
  and the Center for Nonlinear Analysis.
}
\dedicatory{Dedicated to Robert L. Pego, whose life and work is an inspiration.}
\subjclass{%
  Primary:
    60J05. %
  Secondary:
    37A25. %
}
\keywords{enhanced dissipation, mixing time}
\begin{abstract}
  We study the mixing time of a random walk on the torus, alternated with a Lebesgue measure preserving Bernoulli map.
  Without the Bernoulli map, the mixing time of the random walk alone is $O(1/\epsilon^2)$, where $\epsilon$ is the step size.
  Our main results show that for a class of Bernoulli maps, when the random walk is alternated with the Bernoulli map~$\varphi$ the mixing time becomes $O(\abs{\ln \epsilon})$.
  We also study the \emph{dissipation time} of this process, and obtain~$O(\abs{\ln \epsilon})$ upper and lower bounds with explicit constants.
\end{abstract}
\maketitle

\section{Introduction}

The aim of this paper is to study how certain dynamical systems can accelerate convergence of a random walk to its stationary distribution.
Explicitly, let $\T^d$ denote the $d$-dimensional torus, and consider a (discrete time) dynamical system that preserves Lebesgue measure and whose evolution is determined by a map~$\varphi \colon \T^d \to \T^d$.
Let~$\epsilon > 0$ be small,
and $\epsilon \zeta_n$ be a sequence of i.i.d.\ $\T^d$ valued random variables with mean~$0$ and ``size~$\epsilon$'' (in a sense that will made precise shortly).
We consider the Markov process on the torus defined by
\begin{equation}\label{e:XnTorus}
  X_{n+1} = \varphi(X_n) + \epsilon \zeta_{n+1} \,.
\end{equation}
Throughout this paper, we will assume that the law of the random variables $\epsilon \zeta_{n}$ has a density function $K_\epsilon$.
Under our assumptions, the unique, stationary distribution of the process~$X$ is the Lebesgue measure on $\T^d$, and we will denote this measure by~$\pi$.

We will measure the accelerated mixing of~$X$ by estimating the \emph{mixing time} and the \emph{dissipation time} as~$\epsilon \to 0$.
Recall the \emph{mixing time} of~$X$, denoted by $\tmix(\delta)$, is defined by
\begin{equation}
  \tmix(\delta) = \min \set[\Big]{ n \in \N \st  \norm{ \dist(X_n) - \pi }_\TV \leq \delta  \text{ for all initial distributions of } X_0 }\,.
\end{equation}
Here~$\delta \in (0, 1)$ is any fixed constant, and when~$\delta = 1/2$, we will drop the argument and simply write~$\tmix$ instead of $\tmix(1/2)$.   

If~$\varphi$ is the identity map, then $X$ is simply a random walk on the torus.
In this case, if the distribution of~$\epsilon \zeta_n$ is non-degenerate and regular it is not hard to show
that~$\tmix \approx O(1/\epsilon^2)$ as $\epsilon \to 0$ (see for instance Lemma~1 in~\cite{FannjiangWoowski03}).
For more interesting choices of~$\varphi$, however, the mixing time may be dramatically smaller.

The simplest example of of this is when~$\varphi$ is the one dimensional doubling map:
\begin{equation*}
  \varphi(x) = 2x \pmod{1}\,.
\end{equation*}
In this case it is not hard to see~$\tmix \approx O(\abs{\ln \epsilon})$.
Indeed, the linear structure of~$\varphi$ allows us to write
\begin{equation*}
  X_n = 2^n X_0 + \sum_{k=1}^{n} 2^{n-k} (\epsilon \zeta_k) \pmod{1}\,.
\end{equation*}
If $2^{n-1}\epsilon \approx 1$, then the first term in the above sum is roughly uniformly distributed, and can be used to show $\tmix \approx \abs{\ln_2 \epsilon}$ as~$\epsilon \to 0$.
A similar argument can be used to show that if $\varphi$ is an \emph{ergodic toral automorphism} (see for instance~\S4.2 in~\cite{KatokHasselblatt95}), then $\tmix = O(\abs{\ln \epsilon})$ as $\epsilon \to 0$.
In fact, for both these examples, the distribution of~$X_n$ converges to the stationary distribution at a rate that is faster than exponential (see for instance~\cite{FannjiangWoowski03,FengIyer19}, or Appendix~\ref{s:dexp}, below).

The analysis of both the above examples relies crucially on the linear structure of~$\varphi$.
Even when~$\varphi$ is piecewise affine linear, the analysis breaks down and estimating the mixing time is a much harder problem.
One general result that can be deduced from~\cite{FengIyer19} (see Corollary~\ref{c:expMix}, below), is that when~$\varphi$ is~$C^1$ and generates an \emph{exponentially mixing} dynamical system, and the distribution of~$\epsilon \zeta_k$ is a periodized rescaled non-degenerate Gaussian, then
\begin{equation}\label{e:logEp3}
  \tmix \leq O(\abs{\ln \epsilon}^3)\,.
\end{equation}
This bound, however, is not expected to be sharp.
To briefly explain why, suppose~$X_0$ is concentrated at a point $x \in \T^d$.
After one time step, the noise should ensure that some fraction of the mass of~$X_1$ becomes uniformly spread over the ball~$B(\varphi(x), \epsilon)$.
Since~$\varphi$ generates an exponentially mixing dynamical system, we know that for $n \geq O(\abs{\ln \epsilon})$, a constant fraction of the set~$\varphi^n(B(\varphi(x), \epsilon))$ intersects any given~$\epsilon$ ball.
So, after one more time step, the noise should spread a fraction of the mass uniformly over this~$\epsilon$ ball.
This suggests that the optimal bounds should be $\tmix = O(\abs{\ln \epsilon})$, and not~$O(\abs{\ln \epsilon}^3)$ as stated above.  To turn the above argument into a rigorous proof of~$O(\abs{\ln \epsilon})$ bounds, one needs to control the distance between $X_n$ and  $\varphi^n(X_0)$ for $n = O(\abs{\ln \epsilon})$.
This does not always seem possible, and thus showing~$\tmix \leq O(\abs{\ln \epsilon})$ in this generality is still open.
\medskip

The main purpose of this paper is to improve~\eqref{e:logEp3} and obtain $O(\abs{\ln \epsilon})$ mixing time bounds for a class of exponentially mixing maps with non-degenerate noise.
Roughly speaking, we will need to assume that~$\varphi$ is a piecewise affine linear Bernoulli map.
In dimensions higher than~$1$ we will also require that all cylinder sets are axis-aligned cubes (see Section~\ref{s:notation} for a precise description).
A typical example (when $d = 1$) is the map
\begin{equation*}
  \varphi(x) = \begin{cases}
    3x & x \in [0, \tfrac{1}{3})\,,
    \\
    \frac{3(1 - x)}{2} & x \in [\tfrac{1}{3}, 1)\,.
  \end{cases}
\end{equation*}
In dimension two, another example is shown graphically on the left of Figure~\ref{f:2dCyl}.
The map shown on the right of Figure~\ref{f:2dCyl} does not satisfy our assumptions, as some cylinder sets are rectangles, and not axis-aligned squares.
\begin{figure}[hbt]
  \includegraphics[width=.4\linewidth]{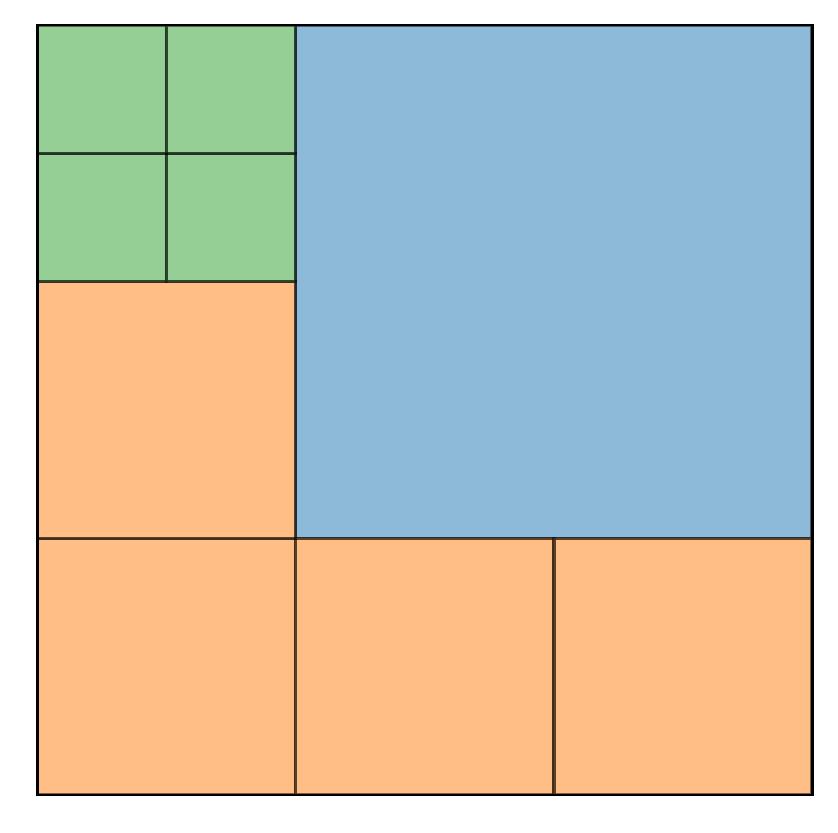}
  \qquad
  \includegraphics[width=.4\linewidth]{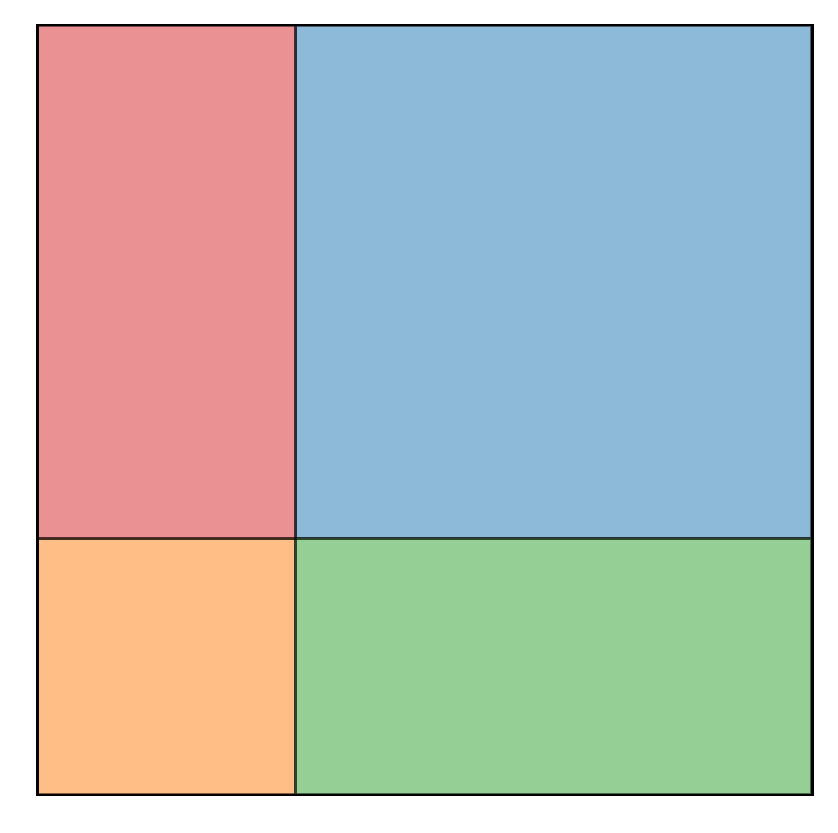}
  \caption{Two examples of the map~$\varphi$ in two dimensions.
    On each smaller region (colored by size), the function~$\varphi$ is an affine linear bijection onto the big square.
  }
  \label{f:2dCyl}
\end{figure}

The first main result in this paper is an $O(\abs{\ln \epsilon})$ upper bound on the mixing time, under certain assumptions on the distribution of~$\epsilon \zeta_n$.
Roughly speaking, we assume that the distribution of $\epsilon \zeta_n$ is either a periodized rescaled non-degenerate Gaussian (Assumption~\ref{a:Gaussian}), or a rescaled separated kernel (Assumption~\ref{a:Separated}).
\begin{theorem}\label{t:TMixUpper}
  Suppose either Assumption~\ref{a:Gaussian} or~\ref{a:Separated} (below) hold.
  Then there exists a constant~$C$ such that for all sufficiently small~$\epsilon$ we have
  \begin{equation}\label{e:tMixGaussian}
    \tmix \leq C \abs{\ln \epsilon}  \,.
  \end{equation}
\end{theorem}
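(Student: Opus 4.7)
The plan is to reduce Theorem~\ref{t:TMixUpper} to an $L^2$-decay estimate for the density of $X_n$ after one initial smoothing step. Write the density-side Markov operator as $P^*\rho = K_\epsilon \ast U_\varphi \rho$, where $U_\varphi$ is the Perron--Frobenius operator of $\varphi$; under the axis-aligned cube Bernoulli hypothesis, $U_\varphi f(y) = \sum_j \ell_j^d f(\varphi_j^{-1}(y))$, with $\ell_j$ the side of cylinder $C_j$ and $\varphi_j$ its affine branch. Let $\rho_n$ be the density of $X_n$ given $X_0 = x$, so that $\rho_1 = K_\epsilon(\cdot - \varphi(x))$. Under Assumption~\ref{a:Gaussian} or~\ref{a:Separated} this satisfies $\norm{\rho_1}_{L^2} \leq C\epsilon^{-d/2}$, and its Fourier coefficients are concentrated on $\abs{k} \leq C/\epsilon$. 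Since $\norm{\rho - 1}_\TV \leq \norm{\rho - 1}_{L^2}$ on the unit-volume torus, it suffices to prove $\norm{\rho_{1+n} - 1}_{L^2} \leq 1/2$ for some $n = O(\abs{\ln \epsilon})$.

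The main step is a per-step $L^2_0$ contraction $\norm{P^*(\rho - 1)}_{L^2} \leq \lambda \norm{\rho - 1}_{L^2}$ with a universal constant $\lambda < 1$, valid on densities $\rho$ with damped high Fourier modes (as holds for every $\rho = P^{*k}\rho_1$ with $k \geq 0$). I would split the spectrum into low modes $\abs{k} \leq N$ and high modes $\abs{k} > N$ via the projectors $\Plow$ and $\Phigh$, for $N$ large but independent of $\epsilon$. On $\Phigh$, the multiplier $\hat K_\epsilon(k)$ is uniformly bounded by some $\mu_1 < 1$ (using the decay of $\hat K_\epsilon$ guaranteed by either assumption), giving a contraction of the high-frequency component. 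On $\Plow$, the exponential mixing of the Bernoulli shift $\varphi$ on mean-zero trigonometric polynomials of degree $\leq N$ yields $\norm{\Plow U_\varphi^T \Plow}_{L^2_0} \leq \mu_2 < 1$ for some $T = T(N)$; meanwhile, the off-diagonal piece $\Phigh U_\varphi \Plow$ transports a definite fraction of low-frequency energy into $\abs{k} > N$, where it is killed by the next convolution with $K_\epsilon$. Combining these three ingredients and iterating drives $\norm{\rho_{1+n} - 1}_{L^2}$ from $O(\epsilon^{-d/2})$ down below a fixed constant in $n = O(\abs{\ln \epsilon})$ steps.

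The main obstacle is the Fourier bookkeeping for $U_\varphi$: a single plane wave $e^{2\pi i k \cdot x}$ pushed forward by $U_\varphi$ becomes, on each cylinder, a wave with wavevector $\ell_j^{-1} k$, which is generically non-integer and hence spreads across a block of integer Fourier modes after periodization. The axis-aligned cube hypothesis is precisely what makes this spread tractable: because the rescaling factor is the same in every coordinate direction, the image stays coordinate-aligned and does not mix directions; for rectangular (non-cubic) cylinders the aspect ratios of rescaled blocks would degenerate and one would lose the uniform-in-$\epsilon$ contraction, which is why Figure~\ref{f:2dCyl} (right) is excluded. Making the spread estimate quantitative enough to obtain $\mu_2 < 1$ uniformly in $\epsilon$ is the technical heart of the argument; once it is established, iterating the contraction $O(\abs{\ln \epsilon})$ times yields~\eqref{e:tMixGaussian}.
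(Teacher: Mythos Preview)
Your proposal has a genuine gap at the heart of the contraction argument. You claim a per-step $L^2_0$ contraction with factor $\lambda<1$ uniform in $\epsilon$, built from a high-frequency multiplier bound $\sup_{|k|>N}|\hat K_\epsilon(k)|\le\mu_1<1$ with $N$ ``large but independent of $\epsilon$''. This is false: for any fixed $N$, $\hat K_\epsilon(k)=\hat K(\epsilon k)\to 1$ as $\epsilon\to 0$ for every $|k|>N$, so $\mu_1\to 1$ and no uniform contraction survives. To get $\mu_1$ bounded away from $1$ you must take the frequency cutoff at scale $N\sim 1/\epsilon$ (this is exactly condition~\eqref{e:KHatDecay} in the paper). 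But then the low-frequency block has dimension growing like $\epsilon^{-d}$, and the time $T=T(N)$ you invoke for $\|\Plow U_\varphi^T\Plow\|_{L^2_0}\le\mu_2<1$ is of order $|\ln\epsilon|$, not $O(1)$. Iterating a contraction that costs $|\ln\epsilon|$ steps, starting from $\|\rho_1-1\|_{L^2}\sim\epsilon^{-d/2}$, yields at best $\tmix=O(|\ln\epsilon|^2)$. This is precisely the Fourier-splitting bound of~\cite{FengIyer19} mentioned in the introduction (leading to~\eqref{e:logEp3}), not the sharp~\eqref{e:tMixGaussian}.

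The paper does not proceed via $L^2$ contraction at all. Instead it establishes a Doeblin minorization: it constructs a family of bump functions $\bm F_s$ supported on cylinder sets $\cyl_s$ satisfying $U_*\bm F_s=\bm F_{\sigma s}$ and the pointwise eigenfunction-like inequality $K_\epsilon*\bm F_s\ge(1-a(\lambda_s\epsilon)^\gamma)\bm F_s$ (Lemma~\ref{l:FsExist}). Because the loss factors $(1-a(\lambda_s\epsilon)^\gamma)$ along a chain $s,\sigma s,\sigma^2 s,\dots$ form a summable geometric series, one obtains $T_*^{|s|+1}\bm F_s\ge\beta'>0$ uniformly (Lemma~\ref{l:TstarLower}). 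After one smoothing step any initial measure dominates a convex combination of such $\bm F_s$ with $|s|\le N\sim|\ln\epsilon|$ (Lemma~\ref{l:efnBound}), giving $T_*^{N+2}\mu\ge\chi\pi$ with $\chi>0$ independent of $\epsilon$. The mixing-time bound then follows from the standard minorization argument. The cube hypothesis enters not through Fourier alignment but because each $\bm F_s$ is an affine rescaling of a single $\bm F_{\zerotuple}$, which is what makes the convolution lower bound~\eqref{e:KepsFs} transferable across scales.
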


Of course, the above implies for any~$\delta \in (0, 1)$ we have
\begin{equation}
  \tmix(\delta) \leq C \abs{\ln \epsilon} (1 + \abs{\ln \delta})\,.
\end{equation}
While this bound is of the right order in~$\epsilon$, the constant~$C$ is not explicit.
We believe the sharp bound is of the form
\begin{equation}\label{e:TMixDeltaSharpUpper}
  \frac{d \ln \epsilon}{\ln p_{\max}} - C(\delta)
    \leq \tmix(\delta)
    \leq \frac{d \ln \epsilon}{\ln p_{\max}} + C(\delta) \ln( 1 + \abs{\ln \epsilon} )\,,
\end{equation}
where~$p_{\max}$ is the measure of the largest domain on which~$\varphi$ is affine linear, and $C(\delta) < \infty$ is a finite constant that is independent of~$\epsilon$.
Although we are presently unable to prove the upper bound in~\eqref{e:TMixDeltaSharpUpper}, we are able to prove the lower bound, under less restrictive assumptions than that of Theorem~\ref{t:TMixUpper}.

\begin{theorem}\label{t:TMixLower}
  Suppose Assumption~\ref{a:Cubes} holds, and for all sufficiently small~$\epsilon > 0$ we have
  \begin{equation}\label{e:FirstMomentKep}
    \int_{\T^d} d(y, 0) K_\epsilon(y) \, \pi(dy) \leq \epsilon\,.
  \end{equation}
  (Here $d(y, 0)$ denotes the distance between~$y$ and $0$ on the torus $\T^d$.)
  Then for every~$\delta \in (0, 1)$ the mixing time of~\eqref{e:XnTorus} is bounded below by
  \begin{equation}\label{e:TMixLower}
    \frac{d \ln \epsilon}{\ln p_{\max}} - C(1 + \abs{\ln(1 - \delta)})
      \leq \tmix(\delta)\,,
  \end{equation}
  for some constant $C$ and all sufficiently small~$\epsilon > 0$.
\end{theorem}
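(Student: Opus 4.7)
The plan is to exhibit a starting measure for which the law of $X_n$ remains concentrated on a set of small Lebesgue measure, so that the total variation distance from $\pi$ exceeds $\delta$ for every $n$ below the stated threshold. Let $C^*$ denote the cylinder of largest measure $p_{\max}$, a cube of side $p_{\max}^{1/d}$; under Assumption~\ref{a:Cubes}, $\varphi|_{C^*}$ is an affine bijection onto $\T^d$ with derivative $A = p_{\max}^{-1/d} O$ for a signed permutation $O$, so $|Av| = p_{\max}^{-1/d} |v|$ for every $v \in \R^d$. Define the nested ``always-largest'' cylinders $C_k^* := \{x \in C^* : \varphi^j(x) \in C^* \text{ for } 1 \le j \le k\}$, each a cube of side $p_{\max}^{(k+1)/d}$ inside $C^*$. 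I would take $X_0 = \delta_{x_0}$ with $x_0$ the center of $C_{n-1}^*$: since $\varphi$ maps $C_k^*$ affinely onto $C_{k-1}^*$ and therefore preserves centers, the deterministic orbit $Y_k := \varphi^k(x_0)$ is the center of $C_{n-1-k}^*$, and in particular $d(Y_k, \partial C^*) \ge \tfrac12 p_{\max}^{(n-k)/d}$ for each $k = 0, \dots, n-1$.

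Next, couple the chain with the linearized noise process $\tilde V_0 := 0$, $\tilde V_{k+1} := A \tilde V_k + \epsilon \zeta_{k+1}$. So long as $X_j \in C^*$ for every $j < k$, the affine structure of $\varphi$ gives $X_k = Y_k + \tilde V_k$, and a geometric sum combined with~\eqref{e:FirstMomentKep} yields $\E|\tilde V_k| \le C_0 \epsilon p_{\max}^{-k/d}$ for a constant $C_0$ depending only on $p_{\max}$. The main obstacle, as I see it, is controlling the probability that this coupling breaks without losing a spurious $\log n \sim \log|\log \epsilon|$ factor from a naive union bound. The trick is to consider the weighted process $W_k := |\tilde V_k|/p_{\max}^{(n-k)/d}$: convexity of the norm together with $\E \zeta_{k+1} = 0$ gives $\E[|\tilde V_{k+1}| \mid \mathcal F_k] \ge |A \tilde V_k| = p_{\max}^{-1/d} |\tilde V_k|$, making $W_k$ a non-negative submartingale, and Doob's maximal inequality then yields $\P(\max_{0 \le k \le n-1} W_k \ge \tfrac12) \le 2\, \E W_{n-1} \le 2 C_0 \epsilon p_{\max}^{-n/d}$, without any factor of $n$. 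On the complementary event, $|\tilde V_k| < \tfrac12 p_{\max}^{(n-k)/d} \le d(Y_k, \partial C^*)$ for every $k < n$, so $X_k$ remains in $C^*$ throughout and $X_n = Y_n + \tilde V_n$.

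To conclude, take $A := B(Y_n, t)$ for $t > 0$ to be chosen and combine the previous paragraph with Markov's inequality applied to $|\tilde V_n|$ to get $\P(X_n \notin A) \le 2 C_0 \epsilon p_{\max}^{-n/d} + C_0 \epsilon p_{\max}^{-n/d}/t$, while $\pi(A) \le c_d t^d$. Hence $\|\dist(X_n) - \pi\|_\TV \ge 1 - 2 C_0 \epsilon p_{\max}^{-n/d} - C_0 \epsilon p_{\max}^{-n/d}/t - c_d t^d$. Balancing by choosing $t$ of order $\epsilon p_{\max}^{-n/d}/(1-\delta)$ makes each error term less than $(1-\delta)/3$ as soon as $\epsilon p_{\max}^{-n/d} \lesssim (1-\delta)^{(d+1)/d}$; taking logarithms rearranges this to $n \le \frac{d \log \epsilon}{\log p_{\max}} - C(1 + |\log(1-\delta)|)$, which produces the claimed lower bound on $\tmix(\delta)$.
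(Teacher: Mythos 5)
Your proof is correct and takes a genuinely different, more probabilistic route than the paper. The paper starts with a \emph{uniform} density $\bm I_s$ on the cylinder $\cyl_s$ with $s=(i,\dots,i)$ ($N$ copies of the maximal-probability index), and establishes via the operator-level $L^1$ estimate in Lemma~\ref{l:pcMix} (and Corollary~\ref{c:TStarUstar}) that $\norm{T_*^{N-N_1}\bm I_s - U_*^{N-N_1}\bm I_s}_{L^1}$ is small; the lower bound then follows from the triangle inequality because $U_*^{N-N_1}\bm I_s = \bm I_t$ is supported on a set of measure $p_{\max}^{N_1}$. You instead start from a point mass at the center of the deepest ``always-largest'' cylinder, couple the chain against its affine linearization $\tilde V_k$ while it remains inside $C^*$, and control the coupling-failure probability with a submartingale and Doob's maximal inequality. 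The technical crux is identical in both approaches --- one must show that the noise-induced error telescopes geometrically in the depth index rather than accumulating a factor of $n$ --- and the two proofs dispatch it by parallel devices: the paper sums the perimeter-to-volume ratios $H(\sigma^k\mathcal S)$ geometrically via~\eqref{e:HSummable}, while your weighted process $W_k = \abs{\tilde V_k}/p_{\max}^{(n-k)/d}$ together with Doob's inequality does the same. A small asymmetry worth noting: your submartingale step uses $\E\zeta_{k+1}=0$ (via Jensen), which is a standing assumption of the paper but is \emph{not} used in the paper's proof of this particular theorem, which needs only the first-moment bound~\eqref{e:FirstMomentKep}; so the paper's argument is marginally more general in that respect. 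Conversely, your argument is more elementary --- it tracks a single cylinder per generation and needs no machinery beyond Markov's and Doob's inequalities --- whereas the paper's Lemma~\ref{l:pcMix} works with the full partition. The paper pays that cost because the same lemma, in its $L^p$ form, is reused for the dissipation-time lower bound in Theorem~\ref{t:pl-dis-time}, something a point-mass coupling would not give directly.
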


Thus far we have been discussing the mixing time $t_{mix}$, which quantifies convergence to the stationary distribution in terms of total variation.
On the other hand, in many physical contexts (such as passive scalar advection~\cite{
  Obukhov49,
  Corrsin51,
  Pierrehumbert94,
  ShraimanSiggia00,
  HaynesVanneste05,
  Thiffeault12,
  MilesDoering18
}), $L^2$ convergence is more natural.
The \emph{dissipation time} (see~\cite{FannjiangWoowski03,FannjiangNonnenmacherEA04,FengIyer19}) measures the rate of convergence of the distribution of $X_n$ to~$\pi$ in $L^2$, for initial distributions which are also~$L^2$.
Explicitly, for
any $\delta \in (0, 1)$, define the dissipation time~$\tdis(\delta)$ by
\begin{equation*}
  \tdis(\delta) = \min \set[\Big]{ n \in \N \st \norm{\theta_n}_{L^2} \leq \delta \norm{\theta_0}_{L^2} \text{ for all } \theta_0 \in \dot L^2 }\,.
\end{equation*}
Here~$\theta_n(x) = \E^x \theta_0(X_n) = \E(\theta_0(X_n) \given X_0 = x)$, and $\dot L^2 \subseteq L^2(\T^d, \pi)$ is the sub-space of all mean-zero functions.
For notational convenience,
when~$\delta = 1/2$, we will drop the argument and write $\tdis$ instead of $\tdis\paren{1/2}$.

Before stating our result concerning the dissipation time of~\eqref{e:XnTorus}, we mention that the dissipation time and mixing time are related in a general setting (see also~\cite{IyerZhou22}).
\begin{proposition}\label{p:tmixTdis}
  For any Markov process, the dissipation time is bounded in terms of the mixing time by the inequality
  \begin{equation}\label{e:tdis-leq-tmix}
    \tdis(\delta) \leq \tmix\paren[\Big]{ \frac{\delta^2}{4} }
      \leq \paren[\big]{2 - \log_\delta 4} \tmix(\delta)
    \,.
  \end{equation}
  For the Markov process~\eqref{e:XnTorus}, if the densities $K_\epsilon$ satisfy
  \begin{equation}\label{e:KepL2}
    \sup_{\epsilon > 0} \epsilon^{d/2} \norm{K_\epsilon}_{L^2} = \bm K < \infty\,,
  \end{equation}
  then for any~$\delta, \delta' \in (0, 1)$ we have
  \begin{equation}\label{e:tmix-leq-tdis}
    \tmix(\delta') \leq 2 + \log_\delta \paren[\Big]{ \frac{\delta' \epsilon^{d/2}}{\bm K} } \tdis(\delta)\,.
  \end{equation}
\end{proposition}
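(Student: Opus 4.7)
The plan is to prove each of the three inequalities of Proposition~\ref{p:tmixTdis} in turn. For the first inequality $\tdis(\delta) \leq \tmix(\delta^2/4)$, I would use the mean-zero property of $\theta_0 \in \dot L^2$ to rewrite
\begin{equation*}
  P^n\theta_0(x) = \int_{\T^d} \theta_0(y)\,(\mu_x^n - \pi)(dy),
\end{equation*}
where $\mu_x^n$ denotes the law of $X_n$ started from $x$. The key trick is to apply Cauchy--Schwarz with respect to the (positive) total-variation measure $|\mu_x^n - \pi|$, whose total mass is $2\|\mu_x^n - \pi\|_{\TV}$. This yields the pointwise estimate
\begin{equation*}
  |P^n\theta_0(x)|^2 \leq 2\|\mu_x^n - \pi\|_{\TV}\int \theta_0^2\,d|\mu_x^n - \pi|.
\end{equation*}
Integrating in $x$ against $\pi$ and using $|\mu_x^n - \pi| \leq \mu_x^n + \pi$ together with the stationarity identity $\int P^n(\theta_0^2)\,d\pi = \|\theta_0\|_{L^2}^2$, the double integral is bounded by $2\|\theta_0\|_{L^2}^2$. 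Choosing $n = \tmix(\delta^2/4)$ makes the prefactor at most $\delta^2/4$, giving $\|P^n\theta_0\|_{L^2} \leq \delta\|\theta_0\|_{L^2}$ as required.

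For the second inequality $\tmix(\delta^2/4) \leq (2 - \log_\delta 4)\tmix(\delta)$, I would invoke the standard submultiplicativity of the worst-case TV distance $d(t) := \sup_x \|\mu_x^t - \pi\|_{\TV}$, which (through the auxiliary coupling distance $\bar d(t) := \sup_{x,y}\|\mu_x^t - \mu_y^t\|_{\TV}$ satisfying $\bar d(t+s)\le\bar d(t)\bar d(s)$ and $d(t)\le\bar d(t)\le 2d(t)$) yields geometric decay $d(k\tmix(\delta)) \leq (2\delta)^k$. A short algebraic calculation identifies the smallest integer $k$ that drives the right-hand side below $\delta^2/4$ as (at most) $2 - \log_\delta 4$.

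For the reverse estimate \eqref{e:tmix-leq-tdis}, the plan is to use the noise kernel to regularize any starting distribution into $L^2$ after a single step, and then iterate the dissipation-time bound. Young's convolution inequality combined with \eqref{e:KepL2} shows that the density $h_1 = d\mu_1/d\pi$ of the distribution after one step satisfies $\|h_1\|_{L^2(\pi)} \leq \bm K/\epsilon^{d/2}$ regardless of the initial law, so $h_1 - 1 \in \dot L^2$ is controlled in the same way. Applying the dissipation time $k$ times shrinks this $L^2$ norm by a factor $\delta^k$. Passing from $L^2$ to $L^1$ via Cauchy--Schwarz on $(\T^d,\pi)$ bounds the TV distance by half the $L^2$ norm of $h - 1$, so the required value of $k$ is of order $\log_\delta(\delta'\epsilon^{d/2}/\bm K)$. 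Absorbing the single initial regularization step and an integer rounding into an extra additive $\tdis(\delta)$ then yields the stated bound.

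The main subtlety is the first part: extracting an $L^2\to L^2$ operator-norm bound on mean-zero functions from only a pointwise TV estimate requires the Cauchy--Schwarz argument with respect to $|\mu_x^n - \pi|$ together with the stationarity cancellation that tames the resulting double integral. Once that is in hand, the remaining two inequalities are essentially bookkeeping consequences of TV submultiplicativity and Young's inequality, respectively.
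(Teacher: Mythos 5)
Your argument for the first inequality $\tdis(\delta) \leq \tmix(\delta^2/4)$ is essentially the paper's: both write $\theta_n(x) = \int \theta_0 \, d(\mu_x^n - \pi)$ using the mean-zero hypothesis, apply Cauchy--Schwarz against the measure $|\mu_x^n - \pi|$ (the paper phrases this as $|\rho_n(x,\cdot)-1|\pi$), and bound the resulting double integral by $2\norm{\theta_0}_{L^2}^2$ via $|\mu_x^n - \pi| \leq \mu_x^n + \pi$ together with stationarity of $\pi$. Your plan for~\eqref{e:tmix-leq-tdis} also matches: Young's inequality and~\eqref{e:KepL2} place the law of $X_1$ in $L^2$ with norm $O(\epsilon^{-d/2})$, the dissipation-time contraction is iterated, and Cauchy--Schwarz converts back to total variation. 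One step the paper makes explicit that you gloss over is that $\tdis$ is defined through the backward operator $Tf = \E^{\,\cdot} f(X_1)$, while the density evolution you iterate is the adjoint $T_*$; identity~\eqref{e:TDisTStar}, proved via the duality $\ip{Tf,g}=\ip{f,T_*g}$, is what licenses applying $\tdis$ to $T_*^n$, and you should state this.

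The genuine gap is in the middle inequality $\tmix(\delta^2/4) \leq (2-\log_\delta 4)\tmix(\delta)$. The submultiplicativity chain you invoke ($\bar d(t+s)\le\bar d(t)\bar d(s)$, $d\le\bar d\le 2d$) gives only the bound $(2\delta)^k$ on the worst-case TV distance after $k\,\tmix(\delta)$ steps. For $\delta\ge 1/2$ this does not decay at all, and for $\delta<1/2$ the smallest $k$ with $(2\delta)^k\le\delta^2/4$ is $\log_{2\delta}(\delta^2/4)$, which is \emph{strictly larger} than $2-\log_\delta 4$. The constant $2-\log_\delta 4$ is what one gets from solving $\delta^k=\delta^2/4$, i.e.\ from the stronger decay $d(kt)\le d(t)^k$ without the factor of two coming from $\bar d\le 2d$ --- a property that a generic Markov chain need not satisfy. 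Thus your ``short algebraic calculation'' does not close as described. To be fair, the paper itself only writes out the proof of the first inequality and treats the second as standard, so this issue is inherited from the source; but you should be aware that the constant you wrote down is not reached by the coupling-based submultiplicativity you cited, and either a larger constant or additional structure (e.g.\ reversibility / self-adjointness) is needed.
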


The~$L^2$ convergence of systems in the form~\eqref{e:XnTorus} have been studied by many authors~\cite{ThiffeaultChildress03,FannjiangWoowski03,FannjiangNonnenmacherEA04,FannjiangNonnenmacherEA06,FengIyer19,OakleyThiffeaultEA21}.
The work of Fannjiang \etal~\cite{FannjiangWoowski03,FannjiangNonnenmacherEA04,FannjiangNonnenmacherEA06} studies the dissipation time under certain spectral assumptions on the associated Koopman operator.
These assumptions, however, are hard to verify.

Previous work of Feng and the first author~\cite{FengIyer19} used a Fourier splitting method to show that if~$\varphi$ is~$C^1$ and generates an exponentially mixing dynamical system, then the dissipation time of the Markov process~\eqref{e:XnTorus} is bounded by
\begin{equation}\label{e:TDisLep2}
  \tdis \leq C \abs{\ln \epsilon}^2\,,
\end{equation}
for some finite constant~$C$ and all sufficiently small~$\epsilon > 0$.
An immediate consequence of this is the mixing time bound~\eqref{e:logEp3} mentioned above.

\begin{corollary}\label{c:expMix}
  Suppose~$K_\epsilon$ is a periodized rescaled standard Gaussian (i.e.\ $K_\epsilon$ is defined by~\eqref{e:KepsPeriodized}, below, with $\check K$ being a standard Gaussian in $\R^d$).
  If $\varphi \colon \T^d \to \T^d$ is~$C^1$ and generates an exponentially mixing, dynamical system, then there exists a constant~$C$ such that~\eqref{e:logEp3} holds for all sufficiently small~$\epsilon$.
\end{corollary}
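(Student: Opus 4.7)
The plan is to combine the dissipation time bound of Feng and Iyer~\cite{FengIyer19}, cited here as~\eqref{e:TDisLep2}, with the mixing-from-dissipation estimate of Proposition~\ref{p:tmixTdis}. Once both ingredients are in place, the corollary is a short computation; the only thing to check by hand is the $L^2$ hypothesis~\eqref{e:KepL2} on the noise kernel.

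First I would verify~\eqref{e:KepL2} for a periodized rescaled standard Gaussian. If $\check K$ is the standard Gaussian density on $\R^d$ and $K_\epsilon$ denotes its periodization at scale $\epsilon$, then a change of variables gives $\norm{K_\epsilon}_{L^2(\T^d)}^2 \leq \epsilon^{-2d}\int_{\R^d} \check K(x/\epsilon)^2\,dx = \epsilon^{-d}\norm{\check K}_{L^2(\R^d)}^2$ for $\epsilon$ small enough that the periodization does not cause any significant overlap. Hence $\epsilon^{d/2}\norm{K_\epsilon}_{L^2} \leq \norm{\check K}_{L^2}$, and the supremum on the left of~\eqref{e:KepL2} is finite.

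Next, since $\varphi$ is $C^1$ and exponentially mixing, the dissipation time bound~\eqref{e:TDisLep2} from~\cite{FengIyer19} yields $\tdis \leq C_1 \abs{\ln \epsilon}^2$ for some finite constant $C_1$ and all sufficiently small $\epsilon$. I would then apply~\eqref{e:tmix-leq-tdis} of Proposition~\ref{p:tmixTdis} with $\delta = \delta' = 1/2$: this gives
\begin{equation*}
  \tmix \leq \paren[\Big]{2 + \log_{1/2}\paren[\Big]{\frac{\epsilon^{d/2}}{2\bm K}}} \tdis
    \leq C_2 \abs{\ln \epsilon}\, \tdis
\end{equation*}
for $\epsilon$ small enough, because $\log_{1/2}(\epsilon^{d/2}/(2\bm K)) = O(\abs{\ln \epsilon})$. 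Chaining these two estimates produces
\begin{equation*}
  \tmix \leq C_2 \abs{\ln \epsilon} \cdot C_1 \abs{\ln \epsilon}^2 = C \abs{\ln \epsilon}^3,
\end{equation*}
which is exactly~\eqref{e:logEp3}.

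There is essentially no serious obstacle here: the two nontrivial inputs (the $\abs{\ln \epsilon}^2$ dissipation bound and the general inequality converting dissipation to mixing) are both available. The only thing to be slightly careful about is the periodization, i.e.\ making sure the factor $\epsilon^{d/2}\norm{K_\epsilon}_{L^2}$ remains bounded uniformly in $\epsilon$ after wrapping the Gaussian onto the torus. A standard tail estimate for the Gaussian makes this routine.
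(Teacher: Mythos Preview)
Your proposal is correct and follows the same approach as the paper: invoke the $\tdis \leq C\abs{\ln\epsilon}^2$ bound from~\cite{FengIyer19} and then apply Proposition~\ref{p:tmixTdis} to convert it into a mixing time bound. The paper's proof is a one-liner citing exactly these two ingredients; you have simply filled in the details, including the verification of~\eqref{e:KepL2} and the explicit $O(\abs{\ln\epsilon})$ bookkeeping in~\eqref{e:tmix-leq-tdis}.
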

\begin{proof}
  Corollary 2.5 in~\cite{FengIyer19} proves~\eqref{e:TDisLep2}, and now Proposition~\ref{p:tmixTdis} implies~\eqref{e:logEp3}.
\end{proof}
\smallskip

We now return to studying the process~\eqref{e:XnTorus} when~$\varphi$ is the piecewise affine linear Bernoulli map described earlier.
Such maps generate exponentially mixing dynamical systems (see for instance~\cite{KatokHasselblatt95,SturmanOttinoEA06}), and hence~\cite{FengIyer19} guarantees the $O(\abs{\ln \epsilon}^2)$ upper bound~\eqref{e:TDisLep2}.
Of course, if~$K_\epsilon$ satisfies the assumptions in Theorem~\ref{t:TMixUpper}, then Proposition~\ref{p:tmixTdis} will imply the stronger upper bound
\begin{equation}\label{e:TDis3CLep}
  \tdis \leq \tmix\paren[\Big]{\frac{1}{16}} \leq 4 \tmix \leq 4C \abs{\ln \epsilon}\,,
\end{equation}
for some constant~$C > 0$.
We will obtain now improve~\eqref{e:TDis3CLep} by obtaining the exact constant on the right, and making less restrictive assumptions on~$K_\epsilon$.

\begin{theorem}\label{t:pl-dis-time}
  Suppose Assumption~\ref{a:Cubes} holds and $\supp( K_\epsilon ) \subseteq B(0, \epsilon)$.
  Moreover, assume that the Fourier coefficients of~$K_\epsilon$ satisfy
  \begin{equation}\label{e:KHatDecay}
    \sup_{\abs{k} > \frac{c}{\epsilon}} \abs{\hat K_\epsilon(k)} < 1\,,
  \end{equation}
  for any~$c > 0$ and all sufficiently small~$\epsilon > 0$.
  Then, there exists an explicit constant~$C$, such that for every $\delta \in (0, 1)$,
  \begin{equation}\label{e:plDisTime}
      \frac{d \ln \epsilon}{\ln p_{\min}}
	- C ( 1 + \abs{\ln (1 - \delta)} )
      \leq \tdis(\delta)
      \leq \frac{d \ln \epsilon}{\ln p_{\max}}
	+ C\paren[\Big]{ 1 + \frac{\abs{\ln \delta}}{\delta^2} }
  \end{equation}
  for all sufficiently small~$\epsilon > 0$.
  Here $p_{\min}$ is the measure of the smallest domain on which~$\varphi$ is affine linear.
\end{theorem}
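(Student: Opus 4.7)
The plan is to estimate $\|T^n\|_{\dot L^2\to\dot L^2}$, where $T=UM_\epsilon$ decomposes as the isometric Koopman operator $U\theta=\theta\circ\varphi$ composed with the Fourier multiplier $M_\epsilon\theta=K_\epsilon*\theta$ of symbol $\hat K_\epsilon$. The structural fact driving both bounds is that on each cylinder $C_i$ (an axis-aligned cube of side $p_i^{1/d}$), $\varphi$ is affine with diagonal Jacobian of entries $\pm p_i^{-1/d}$; iterated, $\varphi^n$ partitions $\T^d$ into cylinders $C_{i_1\cdots i_n}$ of side $\prod_j p_{i_j}^{1/d}$, and the local Fourier-expansion rate under $\varphi^n$ ranges from $p_{\min}^{-n/d}$ (sharpest, on the smallest cylinder) down to $p_{\max}^{-n/d}$ (slowest, on the largest), producing the two rates in~\eqref{e:plDisTime}.

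For the lower bound, test with a single low Fourier mode $\theta_0(x)=e^{2\pi i k_0\cdot x}$ with $|k_0|\asymp 1$, so $\|\theta_0\|_{L^2}=1$. One step yields $T\theta_0=\hat K_\epsilon(k_0)e^{2\pi i k_0\cdot\varphi(x)}$; iterating cylinder-by-cylinder shows $T^n\theta_0$ is, up to a boundary-induced Fourier tail, a superposition of pure exponentials of magnitude at most $p_{\min}^{-n/d}|k_0|$, each attenuated by a product of $\hat K_\epsilon$ factors. Since $\supp K_\epsilon\subseteq B(0,\epsilon)$ and $K_\epsilon$ is centered, Taylor expansion gives $|\hat K_\epsilon(\ell)|\ge 1-C\epsilon^2|\ell|^2$ for $\epsilon|\ell|\ll 1$; summing the geometric deficit $\sum_k \epsilon^2 p_{\min}^{-2k/d}$, dominated by $\epsilon^2 p_{\min}^{-2n/d}$, yields $\|T^n\theta_0\|_{L^2}^2\ge\delta^2$ so long as $\epsilon p_{\min}^{-n/d}\lesssim\sqrt{1-\delta^2}$, i.e.\ $n\le d\ln\epsilon/\ln p_{\min}-C(1+|\ln(1-\delta)|)$, which is the claimed lower bound.

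For the upper bound, fix $R=c_0/\epsilon$ and set $\gamma:=\sup_{|\ell|>R}|\hat K_\epsilon(\ell)|<1$ by~\eqref{e:KHatDecay}. The claim is that after $n_*:=d\ln\epsilon/\ln p_{\max}$ iterations every initial Fourier mode has been transported above $R$ on \emph{every} cylinder of $\varphi^n$: on $C_{i_1\cdots i_n}$ the starting frequency $|\ell_0|\asymp 1$ becomes $|\ell_n|\asymp(p_{i_1}\cdots p_{i_n})^{-1/d}\ge p_{\max}^{-n/d}\ge 1/\epsilon$. Hence each application of $M_\epsilon$ beyond step $n_*$ contributes at least a factor $\gamma$, and a further $O(|\ln\delta|)$ steps drive $\|T^n\|$ below $\delta$; the $|\ln\delta|/\delta^2$ correction in~\eqref{e:plDisTime} arises from converting the $L^2$-integrated transport estimate after the first $n_*$ steps into a uniform operator-norm bound via a Cauchy--Schwarz-type loss.

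The principal obstacle is controlling Fourier leakage in the upper bound: multiplying $e^{2\pi i\ell\cdot\varphi(x)}$ by a cylinder indicator $\mathbf 1_{C_i}$ spreads its Fourier content into a tail decaying only as $|\ell'|^{-d}$, so a naive iteration accumulates an $n$-fold error. Obtaining the sharp prefactor $d/(-\ln p_{\max})$---rather than an entropy-type rate $d/h(\varphi)$ or the $|\ln\epsilon|^2$ bound of~\cite{FengIyer19}---will likely require absorbing the tails into an auxiliary $\BV$-type norm in the spirit of Lasota--Yorke while tracking $L^2$ by interpolation, or a dyadic Fourier-shell induction that exploits the axis-aligned cube hypothesis of Assumption~\ref{a:Cubes}. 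The lower bound, tracking only a single Fourier mode, is comparatively routine once the iteration is set up.
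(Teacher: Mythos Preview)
Your plan differs substantially from the paper's, and the gap you flag for the upper bound also undermines your lower bound.  The difficulty is that once $\varphi$ is only \emph{piecewise} affine, a pure mode $e^{2\pi i k_0\cdot x}$ does not evolve as a clean superposition of plane waves under $T=UM_\epsilon$.  After one step you have $\hat K_\epsilon(k_0)\,e^{2\pi i k_0\cdot\varphi(x)}$, which already has unbounded Fourier support; applying $M_\epsilon$ again does \emph{not} act as a single scalar, so the description ``a superposition of pure exponentials of magnitude at most $p_{\min}^{-n/d}|k_0|$, each attenuated by a product of $\hat K_\epsilon$ factors'' is not what $T^n\theta_0$ actually is.  Controlling the boundary tail at each step is exactly the same leakage problem you identify for the upper bound, and calling it ``comparatively routine'' understates it: making your lower bound rigorous would require showing that after $j$ steps most of the $L^2$ mass of $T^j\theta_0$ still sits at frequencies $\lesssim p_{\min}^{-j/d}$, which is essentially a quantitative $\BV$/perimeter estimate in disguise.

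The paper sidesteps the leakage entirely by working in physical space.  The engine for both bounds is Lemma~\ref{l:pcMix}: if $f_0$ is piecewise constant on a partition $\{\cyl_s\}_{s\in\mathcal S}$ into cylinder cubes of side $\approx L\epsilon$, then $\|T_*^N f_0-U_*^N f_0\|_{L^p}$ is controlled by a geometric sum of perimeter-to-volume ratios $H(\sigma^n\mathcal S)$ (the cube hypothesis in Assumption~\ref{a:Cubes} is used precisely to make this sum uniformly bounded).  Since $U_*$ acts on $\one_{\cyl_s}$ by the exact shift $\bm I_s\mapsto\bm I_{\sigma s}$, no Fourier tails appear.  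For the lower bound (Section~\ref{s:dissipationTimeBounds}, last subsection) one simply takes $f_0$ piecewise constant on the level-$N$ cylinders, with $N$ chosen so that $\ell_s\approx\Lambda\epsilon$; then $U_*^{N-1}f_0$ has the same distribution as $f_0$ and Corollary~\ref{c:TStarUstar} gives $\|T_*^{N-1}f_0\|_{L^2}\ge(1-\delta')\|f_0\|_{L^2}$ directly, yielding the $p_{\min}$ rate.

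For the upper bound (Section~\ref{s:DtimeUpperBd}) the paper does use a Fourier low/high split $\Plow+\Phigh$ at scale $B/\epsilon$, but only once, and only to reduce to the physical-space lemma.  High frequencies are damped by one convolution via~\eqref{e:KHatDecay}; low frequencies have $\|\nabla\Plow g\|_{L^2}\le (B/\epsilon)\|\Plow g\|_{L^2}$, so by the Poincar\'e inequality on cylinders $\Plow g$ is close in $L^2$ to a function piecewise constant on $\mathcal S_{\epsilon,\delta}$, to which Lemma~\ref{l:pcMix} applies (Lemma~\ref{l:LowFreqDecay}).  A short two-case argument (either $\|\Phigh U_*f_n\|$ is already small, or it is large and one step of $M_\epsilon$ gives a definite contraction) then yields~\eqref{e:plDisTime} with the $p_{\max}$ rate.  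There is no need to track Fourier modes through the dynamics, and no Lasota--Yorke or dyadic-shell machinery is invoked.
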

\begin{remark}
  Since~$\tdis(\delta)$ is a decreasing function of~$\delta$, the lower bound in~\eqref{e:plDisTime} is useful when~$\delta$ is close to~$1$, and the upper bound is useful when~$\delta$ is close to~$0$.
\end{remark}

In general,~$p_{\min} < p_{\max}$, and so there may be a gap between the upper and lower bounds in ~\eqref{e:plDisTime}.
However, in the symmetric case (when each of the regions on which~$\varphi$ is affine linear have equal size), the upper and lower bounds in~\eqref{e:plDisTime} match.
Obtaining matching upper and lower bounds for the mixing time is indicative of a \emph{cutoff phenomenon}~\cite{Diaconis96,LevinPeresEA09,BasuHermonEA17}.
Although we believe the cutoff phenomenon happens even when~$p_{\min} < p_{\max}$, we are presently unable to prove it.

\medskip

The main idea behind our proofs
is to find a sufficiently large family of initial distributions which are both mixed well by~$\varphi$, and not perturbed too much by the noise.
For Theorem~\ref{t:TMixUpper} the family we use are bump functions on ``cylinder sets'' (which will be defined shortly).
These bump functions will vanish linearly near the boundary, giving a good estimate on the rate at which~$X$ leaves cylinder sets.
The key to using this is an eigenfunction like inequality~\eqref{e:KepsFs}, explained in Section~\ref{s:MixTime}, below.

For Theorems~\ref{t:TMixLower} and~\ref{t:pl-dis-time} the family of initial distributions we use are piecewise constant on cylinder sets.
This family of distributions are better mixed by the deterministic dynamics of~$\varphi$, which is what leads to the explicit coefficients of $\abs{\ln \epsilon}$ coefficients in~\eqref{e:plDisTime}.
The drawback of this method is that to apply it one needs to first approximate the initial distribution by one which is piecewise constant to a high degree of accuracy.
Using Fourier projections, we are able to obtain convergence of the density of~$X$ in~$L^2$, but not in~$L^1$.
This allows us to obtain the dissipation time bounds~\eqref{e:plDisTime}, but not the mixing time bound~\eqref{e:tMixGaussian}.
In order to obtain a bound on the mixing time by this method, one would need to obtain suitable $W^{1,1}$ decay of the density, which we are presently unable to do.
\medskip

There are several elementarily stated variants of our main results that \emph{can not} be proved by our methods, and require new ideas.
\begin{enumerate}
  \item
    What is the mixing time if the cylinder sets are cuboids, and not cubes (e.g.\ the map~$\varphi$ shown on the right of Figure~\ref{f:2dCyl})?
    More generally, what is the mixing time if the map~$\varphi$ is Bernoulli and the cylinder sets ``shrink nicely''?
    This is particularly interesting in light of a recent striking result~\cite{DolgopyatKanigowskiEA21} which shows that any~$C^{1+\alpha}$ exponentially mixing map is Bernoulli.
    For general Bernoulli systems the cylinder sets could be very irregular.
    Our methods can not be used to address this situation and new ideas are required.
  \item
    If the map~$\varphi$ is a bi-infinite Bernoulli shift (such as the bakers map~\cite{SturmanOttinoEA06,KatokHasselblatt95}, or the folded bakers map appearing in~\cite{ElgindiZlatos19}), then is the mixing time still~$O(\abs{\ln \epsilon})$?
    Numerical evidence suggests this is true, but we are presently unable to generalize our methods to this situation.
  \item
    Does the Markov process~$X$ exhibit a cutoff phenomenon?
    When the map~$\varphi$ is a uniformly expanding map, this is easy to show.
    When $p_{\max} = p_{\min}$, the bound~\eqref{e:plDisTime} shows a cutoff phenomenon in the~$L^2$ sense, but not necessarily in the usual total variation sense.
    In general, when~$p_{\max} \neq p_{\min}$, and~$\varphi$ is only piecewise affine linear, we suspect~$X$ exhibits a cutoff phenomenon, but are presently unable to prove it.
  \item 
    In the spatially discrete, speeding up convergence of Markov chains by various techniques has been extensively studied~\cite{
      ChungDiaconisEA87,
      ChenLovaszEA99,
      DiaconisGraham92,
      DiaconisHolmesEA00,
      Neal04,
      KapferKrauth17,
      ConchonKerjan22,
      Diaconis13}.
    A recent result that of particular relevance is that of Chatterjee and Diaconis~\cite{ChatterjeeDiaconis20}, where the authors study~\eqref{e:XnTorus} on the discrete torus, and show that choosing~$\varphi$ to be almost any bijection on the state space results in an exponential speedup.
    To the best of our knowledge, there is no continuous version of this result and the method used in~\cite{ChatterjeeDiaconis20} can not easily be adapted to the continuous setting.
    %
\end{enumerate}

Finally, we mention related questions in the continuous time setting.
In the context of fluid dynamics, there has been a lot of recent activity studying the notion and applications of \emph{enhanced dissipation} (see for instance~\cite{
  ConstantinKiselevEA08,
  Zlatos10,
  BedrossianCotiZelati17,
  FengIyer19,
  Wei19,
  CotiZelatiDelgadinoEA20,
  FengFengEA20,
  BedrossianBlumenthalEA21,
  ColomboCotiZelatiEA21,
  IyerXuEA21,
  FengMazzucato22,
  IyerZhou22,
  Seis22
}).
This notion measures faster convergence of the associated PDE to its equilibrium distribution.
The motivation for the present paper was to provide a different perspective and study this phenomenon using convergence of Markov processes.
While we are able to obtain optimal bounds in one discrete time setting, there are many related continuous time settings where the optimal bounds are open.

\subsection*{Plan of this paper}
In Section~\ref{s:notation} we describe~$\varphi$, and state the assumptions required for our main results precisely.
In Section~\ref{s:MixTime} we prove Theorem~\ref{t:TMixUpper} by constructing a family of bump functions supported on cylinder sets.
In Section~\ref{s:MixTimeLower} we prove Theorem~\ref{t:TMixLower} by estimating the rate at which piecewise constant functions get mixed.
In Section~\ref{s:TMixTDis} we prove Proposition~\ref{p:tmixTdis} relating the mixing time and dissipation time, and in Section~\ref{s:dissipationTimeBounds} we prove Theorem~\ref{t:pl-dis-time} obtaining upper and lower bounds on~$\tdis$.
Finally, in Appendix~\ref{s:dexp} we show that when~$\varphi$ is a linear expanding map (or an ergodic toral automorphism) we show that the convergence of~$X_n$ happens at a double exponential rate.

\section{Notation and Preliminaries}\label{s:notation}
\subsection{Piecewise affine linear Bernoulli maps.}\label{s:Phi}
We begin by precisely describing the map~$\varphi:\T^d \to \T^d$ used in \eqref{e:XnTorus}.
Let~$M \geq 2$ and $\check E_1, \dots, \check E_M \subseteq \R^d$ be a partition of $[0, 1)^d$.
When $d = 1$ we assume each $\check E_i$ is an interval, and when $d > 1$ we assume 
each $\check E_i$ is a $d$-dimensional cube with sides parallel to the coordinate axes (as in the left figure in Figure~\ref{f:2dCyl}).

Let $p_i = \check \pi(\check E_i) \in (0,1)$, where $\check \pi$ denotes the Lebesgue measure on $\R^d$.
We will assume~$\check \varphi_i \colon \check E_i \to [0, 1)^d$ are affine linear bijections of the form
\begin{equation}\label{e:PhiDef}
  \check \varphi_i(\check x) = \frac{\check D_i \check x}{p_i^{1/d}} + \check e_i\,,
\end{equation}
for some vectors~$\check e_i \in \R^d$, and orthogonal matrices~$\check D_i$.

We now project onto the torus $\T^d$, and define the map~$\varphi \colon \T^d \to \T^d$.
Let $\Pi \colon \R^d \to \T^d = \R^d / \Z^d$ denote the canonical projection, $E_i = \Pi(\check E_i)$, and let
\begin{equation*}
  \varphi_i(x) = \Pi \circ \check \varphi_i( \check x )\,,
\end{equation*}
where $\check x \in [0, 1)^d$ is the unique point such that $\Pi(\check x) = x \in \T^d$.
Define the expanding map~$\varphi \colon \T^d \to \T^d$ by
\begin{equation}\label{e:phiExpanding}
  \varphi(x) = \varphi_i(x)\,,
  \quad\text{if }x \in E_i \,.
\end{equation}
With this notation, we define quantities~$p_{\min}$ and $p_{\max}$ (appearing in Theorems~\ref{t:TMixLower} and~\ref{t:pl-dis-time}) by
\begin{equation*}
  p_{\min} \defeq \min_{i \in \mathcal I} \pi(E_i)\,,
  \quad\text{and}\quad
  p_{\max} \defeq \max_{i \in \mathcal I} \pi(E_i)\,.
\end{equation*}

Note that
\begin{equation*}
  \sum_{i \in \mathcal I} p_i = \sum_{i \in \mathcal I} \pi( E_i ) = \pi(\T^d) = 1\,.
\end{equation*}
Hence, for any Borel set~$A \subseteq \T^d$,
\begin{equation*}
  \pi(\varphi^{-1}(A))
    = \sum_{i=1}^M \pi(\varphi_i^{-1}(A))
    = \pi(A) \sum_{i=1}^M p_i
    = \pi(A)\,,
\end{equation*}
which shows that~$\varphi$ preserves the Lebesgue measure~$\pi$.
This implies that the unique stationary distribution of the process~\eqref{e:XnTorus} is also the Lebesgue measure~$\pi$.

\subsection{Cylinder sets and the Bernoulli shift.}\label{s:Cylinder}
Our analysis of $X_n$ relies on the fact that the map~$\varphi$ has the structure of a \emph{Bernoulli shift} on the space of one-sided sequences (see for instance~\cite{KatokHasselblatt95,SturmanOttinoEA06}).
The building block for functions that are controllably mixed will be based on \emph{cylinder sets}, which we define in this section.

Let $\mathcal I = \set{1, \dots, M}$, and $\mathcal T$ denote the set of all finite length $\mathcal I$-valued tuples.
Explicitly,
\begin{equation*}
    \mathcal T
    = \set{\zerotuple} \cup \bigcup_{m = 1}^\infty \mathcal I^m,
\end{equation*}
where $\zerotuple$ denotes the empty tuple.
Given a tuple $s = (s_0, \dots, s_{m-1}) \in \mathcal T$ we use $\abs{s} = m$ to denote the length of the tuple~$s$, with \(\abs{\zerotuple} = 0\) by convention.

Let \(\sigma\colon \mathcal T \to \mathcal T\) be the \emph{Bernoulli left shift}.
That is, $\sigma(s)$ removes the first coordinate of~$s$ and shifts the other coordinates left.
More precisely, we define
\[
    \sigma(s_0, \ldots, s_{m-1}) = (s_1, \ldots, s_{m-1}) \,,
    \quad\text{and}\quad
    \sigma(\zerotuple) = \zerotuple\,.
\]
Let~$\sigma^k$ denote the \(k\)-fold composition of the map \(\sigma\).

\begin{figure}[bt]
    \includegraphics[width=.3\linewidth]{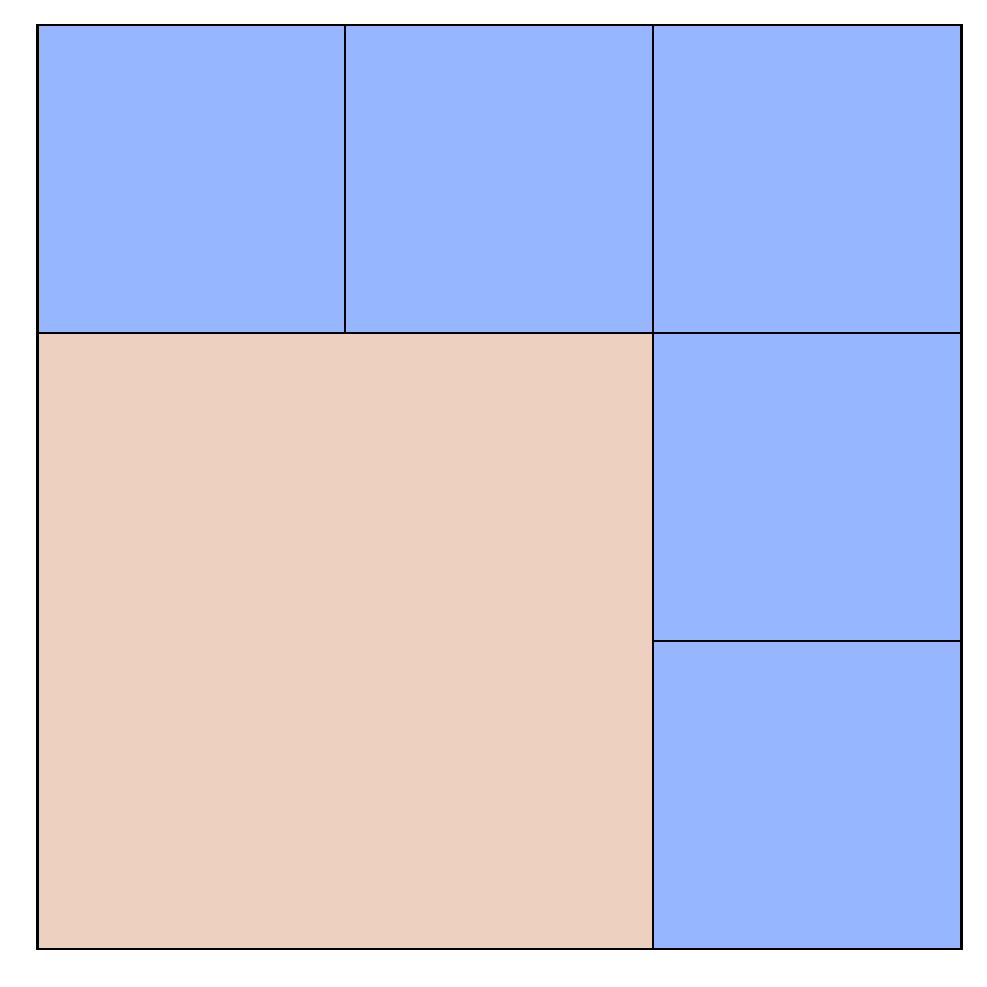}
    \includegraphics[width=.3\linewidth]{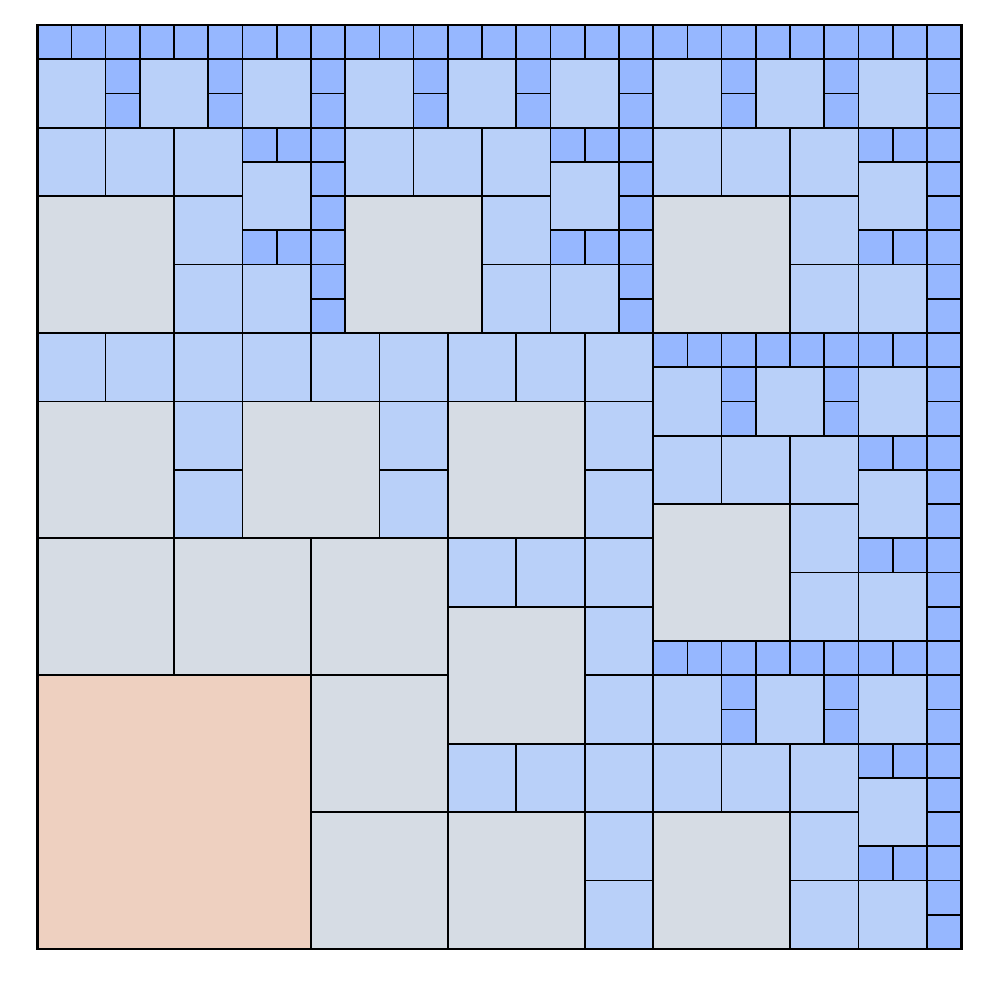}
    \includegraphics[width=.3\linewidth]{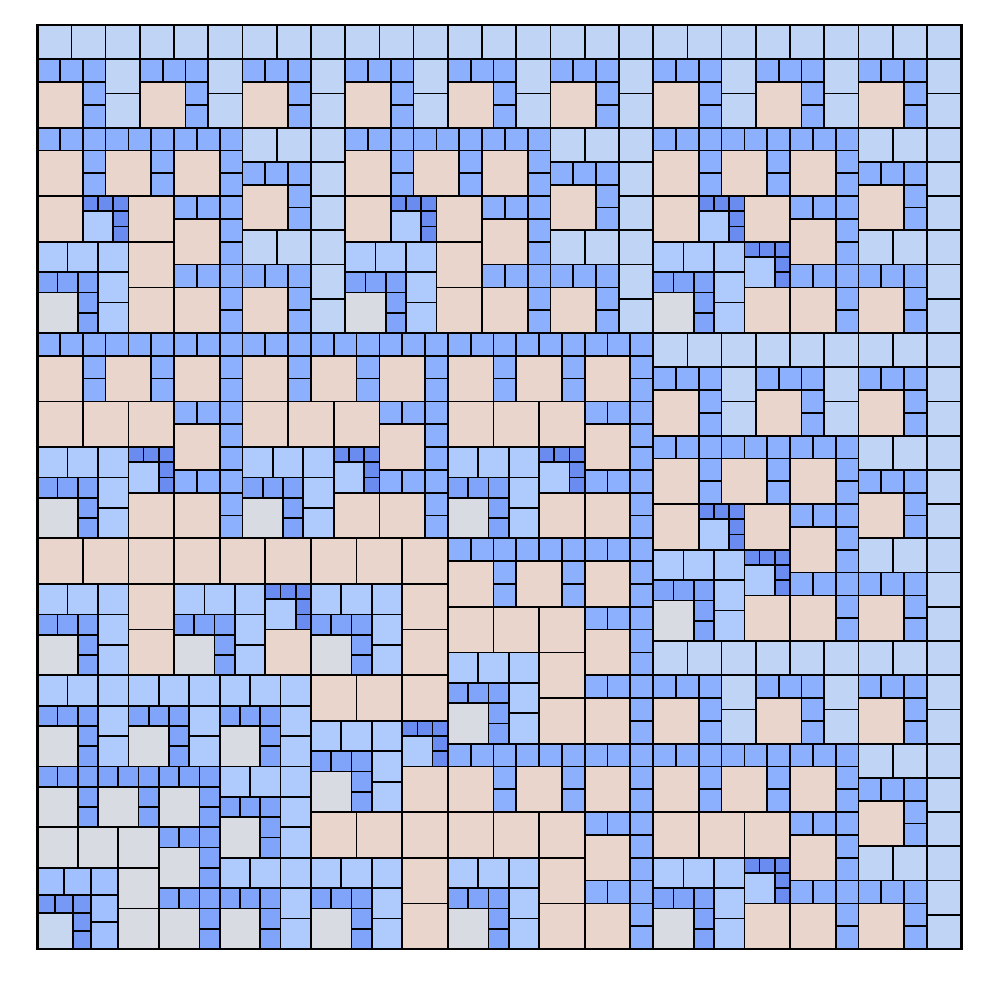}
  \qquad
  \caption{A few examples of cylinder sets.
    The leftmost figure shows order~$0$ cylinder sets, which are simply the domains~$E_1, \dots, E_6$.
    The middle figure shows all cylinder sets of order~$2$.
    The rightmost figure shows a partition of the torus into cylinder sets of different orders, that will be used in the proof.
  }
  \label{f:CylSubdivisionFigure}
\end{figure}
Now given a map \(\varphi\) as in~\eqref{e:phiExpanding}, we can define the associated cylinder set associated to a tuple~\(s \in \mathcal T\) by
\begin{equation}\label{e:CylS}
  \cyl_s \defeq \set{ x \in \T^d \st \varphi^n(x) \in  E_{s_n} \text{ for all } n \leq \abs{s} }\,.
\end{equation}
When~$s = \zerotuple$, the associated cylinder set $\cyl_s$ is the whole torus $\T^d$.
When $s = (s_0)$ is a tuple of length~$1$, the associated cylinder set~$\cyl_s$ is simply the domain~$E_{s_0}$ (see for instance the leftmost figure in Figure~\ref{f:CylSubdivisionFigure}).
When~$\abs{s} > 1$, each of these cylinder sets get subdivided further, forming finer and finer partitions of the torus (e.g.\ the middle figure in Figure~\ref{f:CylSubdivisionFigure}).

Note that the action of~$\varphi$ on cylinder sets is simply the Bernoulli shift.
That is, for any~$s \in \mathcal T$,
\begin{equation*}
  \varphi(\cyl_s) = \cyl_{\sigma s}\,.
\end{equation*}
In particular, this means that~$\varphi^{\abs{s}}(\cyl_s) = \T^d$, and so an initial distribution that is supported on~$\cyl_s$ becomes spread over~$\T^d$ after~$\abs{s}$ iterations of~$\varphi$.
This can be used to show that~$\varphi$ generates an \emph{exponentially mixing} dynamical system (see for instance~\cite{KatokHasselblatt95,SturmanOttinoEA06}).

Since the process~$X$ is constructed by intertwining the action of~$\varphi$ with noise, this suggests that if~$X_0$ is concentrated on one cylinder set~$\cyl_s$, then the distribution of $X_{\abs{s}}$ should be spread out over the whole torus.
This, however, is not easy to prove as the action of the noise does not necessarily commute with the dynamics of~$\varphi$.
The main idea behind the proof of Theorem~\ref{t:TMixUpper} is to construct a special distribution that is supported on~$\cyl_s$ and is provably mixed after~$\abs{s}$ iterations.
We do this in Section~\ref{s:GaussianMixTime}, below.

\subsection{Assumptions on \texorpdfstring{$\varphi$}{phi} and the noise.}

We now precisely describe the assumptions that are required for our results.
The first assumption is on the geometry of the cylinder sets.
\begin{assumption}\label{a:Cubes}
  Assume that the following hold:
  \begin{enumerate}[(1)]
    \item Each~$\varphi_i \colon E_i \to \T^d$ (defined in~\eqref{e:phiExpanding}) is a bijection.
    \item\label{i:mixing}
      For every~$\check x \in \partial [0, 1]^d$, there exists~$i \in \mathcal I$ and~$\check y \in (0, 1)^d$ such that~$\check \varphi_i(\check y) = \check x$.
    \item If~$d \geq 2$, every cylinder set~$\cyl_s$ (as defined in~\eqref{e:CylS}) is an axis-aligned cube.
  \end{enumerate}
\end{assumption}

If the cylinder sets are not exactly cubes, Theorems~\ref{e:TMixLower} and~\ref{t:pl-dis-time} can still be used provided the \emph{volume to perimeter} ratio of cylinder sets is controlled by the diameter.
Specifically, the quantity that needs to stay bounded is the right hand side of~\eqref{e:fullCylLpEstimate} that appears in Lemma~\ref{l:pcMix} from Section~\ref{s:MixTimeLower} below.
This condition, however, is hard to verify for general Bernoulli systems, and so we restrict our attention to the piecewise affine linear case in this paper.

The assumptions on the noise distribution in Theorem~\ref{t:TMixUpper} are a little more restrictive.
The noise has to either be a periodized, non-degenerate Gaussian, or a sum of separated kernels with controlled first moments.
We state this as our next two assumptions.

\begin{assumption}\label{a:Gaussian}
  In addition to Assumption~\ref{a:Cubes}, suppose the following hold:
  \begin{enumerate}[(1)]\reqnomode
    \item
      The function~$\check K$ is the density of a Gaussian in $\R^d$ with mean~$0$ and non-degenerate covariance matrix, and the densities~$K_\epsilon$ are obtained by rescaling and periodizing~$\check K$.
      Explicitly,~$K_\epsilon$ is defined by
      \begin{equation}\label{e:KepsPeriodized}
	K_\epsilon(x) = \sum_{n \in \Z^d} \check K_\epsilon(\check x + n)\,,
	\quad
	\check K_\epsilon (\check x) = \frac{1}{\epsilon^d} \check K\paren[\Big]{ \frac{\check x}{\epsilon} }
	\,.
      \end{equation}
    \item 
      The function $\check K$ is invariant under the action of each orthogonal matrix $\check D_i$ in~\eqref{e:PhiDef}.
      That is, we assume
      \begin{equation}\label{e:KSymm}
	\check K( \check D_i \check x ) = \check K( x )
	\quad \text{for all } \check x \in \R^d\,.
      \end{equation}
  \end{enumerate}
\end{assumption} 

\begin{remark*}
  The invariance assumption~\eqref{e:KSymm} is satisfied if each of the orthogonal matrices~$\check D_i$ commute with the covariance matrix of~$\check K$.
\end{remark*}

\begin{assumption}\label{a:Separated}
  In addition to Assumption~\ref{a:Cubes}, suppose the following hold:
  \begin{enumerate}[(1)]\reqnomode
    \item
      The distributions~$K_\epsilon$ are obtained by rescaling and periodizing a distribution~$\check K$ (as in~\eqref{e:KepsPeriodized}), and~$\check K$ satisfies the conditions below.
    \item 
      For every~$\eta > 0$ we have
      \begin{equation}
	\inf_{\check x, \check y \in [0, \eta)^d} \check K(\check x - \check y) > 0\,.
      \end{equation}
    \item\label{i:sep}
      There exists a family of densities~$\set{ \check K_n }_{n \geq 1}$ such that~$\check K_n \to \check K$ in~$L^1(\R^d)$, and each~$\check K_n$ is in the form
      \begin{equation}\label{e:Ksep3}
	\check K_n(\check x) = \sum_{i = 1}^n b_{n, i} \prod_{j=1}^d \check k_{n,i}( \check x_j )\,,
      \end{equation}
      for some even functions $\check k_{n, i}:\R \to \R$, and constants $b_{n, i} \geq 0$.
      Moreover, there exists a constant $A > 0$ such that such that for every~$n, i$ we have
      \begin{equation}\label{e:Ksep3Moments}
	\int_{\R} \check k_{n,i}(\check x_j) \, d\check x_j = 1\,,
	\quad
	\int_{\R} \abs{\check x_i} \check k_{n,i}(\check x_j) \, d\check x_j \leq A\,,
	\quad
	\int_0^{\frac{1}{2}} \check x_i \check k_{n,i}(\check x_j) \, d\check x_j \geq \frac{1}{A}\,.
      \end{equation}
    \item
      The density~$\check K$ is invariant under the action of each~$\check D_i$ as in~\eqref{e:KSymm}.
  \end{enumerate}
\end{assumption} 

We do not presently have a simple description of the class of probability densities that satisfy condition~\ref{i:sep} in Assumption~\ref{a:Separated}.
If~\eqref{e:Ksep3Moments} did not require a \emph{lower bound} on the first moments, then any compactly supported~$L^1$ probability distribution can be expressed as the~$L^1$ limit of distributions~$\check K_n$ in the form~\eqref{e:Ksep3}.
The lower bound, however, breaks the standard approximation arguments.

\section{Upper bounds on the Mixing Time.}\label{s:MixTime}
\subsection{Proof of Theorem~\ref{t:TMixUpper}}\label{s:GaussianMixTime}
As mentioned above, the main idea behind the proof of Theorem~\ref{t:TMixUpper} is to construct a family of ``bump functions'' supported on cylinder sets whose behavior is controlled under the evolution of~$X$.

To state this, it is convenient to introduce the operators~$T_*$ and~$U_*$.
The operator~$T_*$ is the push forward of a distribution by the transition kernel of~$X$.
Explicitly, we define
\begin{equation}\label{e:TStarDef}
  T_* \mu(y) \defeq \int_{\T^d} \mu(dx) \rho(x, y)\,,
\end{equation}
where $\rho(x, y)$ is the transition density of the process~$X$, and~$\mu$ is a finite measure.  If $\mu <\!\!< \pi$ and $\frac{d \mu}{d \pi} = f \in L^1(\pi)$, we define the action of $T_*$ on the density $f$ by
\begin{gather}\label{e:TStarDefPrime}\tag{\ref*{e:TStarDef}$'$}
  T_* f(y) = \int_{\T^d} \pi(dx) f(x) \rho(x, y) \,.
\end{gather}
From~\eqref{e:XnTorus} we note that we may also write
\begin{equation}\label{e:DefUTStar}
  T_* f = K_\epsilon * U_*f\,,
  \qquad\text{where}\qquad 
  U_*f \defeq \sum_{i = 1}^M f \circ \varphi_i^{-1} \abs{ \det( D \varphi_i^{-1} ) } \,.
\end{equation}

Recall by Assumption~\ref{a:Cubes}, all cylinder sets are intervals for~$d = 1$ and axis-aligned cubes for $d > 1$.
We will use~$\ell_s$ denote the length of the interval~$\cyl_s$ when~$d = 1$, and the side length of the cube~$\cyl_s$ when $d > 1$.
For convenience define~$\lambda_s = 1/\ell_s$.
Explicitly,
\begin{equation}\label{elldef}
  \ell_s = \pi(\cyl_s)^{1/d}\,,
  \quad\text{and}\quad
  \lambda_s = \frac{1}{\ell_s} = \frac{1}{\pi(\cyl_s)^{1/d}}\,.
\end{equation}

\begin{lemma}\label{l:FsExist}
  Suppose either Assumption~\ref{a:Gaussian} or Assumption~\ref{a:Separated} hold.
  Then there exist a family of continuous functions~$\set{\bm F_s \st s \in \mathcal T}$ with the following properties.
  Each function~$\bm F_s$ is supported on~$\cyl_s$, is strictly positive on the interior of~$\cyl_s$, is normalized so that~$\norm{\bm F_s}_{L^1} = 1$, and satisfies the identity
  \begin{equation}\label{e:UStarFs}
    U_* \bm F_s = \bm F_{\sigma s}\,,
    \quad\text{for all } s \in \mathcal T - \set{\zerotuple}\,.
  \end{equation}
  Moreover, there exists $a < \infty$, $\gamma > 0$ such that for all~$s \in \mathcal T$ we have
  \begin{equation}\label{e:KepsFs}
    K_\epsilon * \bm F_s
      \geq \paren{ 1 - a (\lambda_s \epsilon)^\gamma} \bm F_s\,.
  \end{equation}
\end{lemma}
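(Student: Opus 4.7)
The plan is to construct all $\bm F_s$ from a single base density $\bm F_\zerotuple$ via a Bernoulli pullback, and then reduce the key inequality \eqref{e:KepsFs} to a scale-free estimate on the base.

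I would take
\[
  \bm F_\zerotuple(x) = c \prod_{j=1}^{d} g(x_j),
\]
where $g\colon [0,1]\to[0,\infty)$ is a fixed continuous ``tent-like'' function that vanishes linearly at the endpoints and is symmetric around $1/2$, and $c$ is a normalizing constant. For each nonempty tuple $s = (s_0, \ldots, s_{m-1})$, define
\[
  \bm F_s(x) = \lambda_s^d\, \bm F_\zerotuple\bigl( \varphi^{|s|}(x) \bigr) \quad\text{for } x \in \cyl_s,
\]
and $\bm F_s\equiv 0$ outside $\cyl_s$. This is equivalent to the recursion $\bm F_s = p_{s_0}^{-1}\, \bm F_{\sigma s}\circ \varphi_{s_0}$ on $\cyl_s$, since $\pi(\cyl_s) = \prod_{i<m} p_{s_i} = \lambda_s^{-d}$. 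Support, positivity on the interior, $L^1$-normalization, and the identity $U_* \bm F_s = \bm F_{\sigma s}$ all follow directly from this formula together with $|\det D\varphi_{s_0}^{-1}| = p_{s_0}$. Continuity of $\bm F_s$ on $\T^d$ reduces to the condition that $\bm F_\zerotuple$ vanish on $\partial[0,1]^d$: by Assumption~\ref{a:Cubes}\eqref{i:mixing}, $\varphi^{|s|}$ sends $\partial\cyl_s$ into $\partial[0,1]^d$, so the values agree across boundary identifications.

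The hard part is \eqref{e:KepsFs}, and the crucial observation is scale covariance. Since the cylinders are axis-aligned cubes (Assumption~\ref{a:Cubes}(3)), each $\check D_i$ is forced to be a signed coordinate permutation, and hence so is $R_s := \check D_{s_0}\cdots \check D_{s_{m-1}}$; in particular $\varphi^{|s|}\big|_{\cyl_s}$ is the affine map $\Psi_s(x) = \lambda_s R_s(x - c_s)$ for an appropriate corner $c_s$. The symmetry of $g$ implies $\bm F_\zerotuple \circ R_s = \bm F_\zerotuple$. Changing variables $y = c_s + \ell_s R_s^{-1} v$ in the convolution integral, and invoking the invariance $\check K(\check D_i\,\cdot) = \check K(\cdot)$ from Assumption~\ref{a:Gaussian}(2) or~\ref{a:Separated}(4), one arrives at
\[
  (K_\epsilon * \bm F_s)(x) = \lambda_s^d\,\bigl(K_{\lambda_s\epsilon} * \bm F_\zerotuple\bigr)\bigl(\Psi_s(x)\bigr),
\]
modulo exponentially small periodization corrections. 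Since also $\bm F_s(x) = \lambda_s^d\,\bm F_\zerotuple(\Psi_s(x))$, the required inequality reduces to proving the scale-free base estimate
\[
  (K_\delta * \bm F_\zerotuple)(u) \geq (1 - a\delta^\gamma)\, \bm F_\zerotuple(u) \quad \text{for all } u \in \T^d
\]
for all sufficiently small $\delta > 0$. (For $\delta$ bounded below, picking $a$ large makes the right-hand side non-positive, and the inequality is trivial.)

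To prove the base estimate I would exploit the product structure of $\bm F_\zerotuple$. Under Assumption~\ref{a:Separated}, approximate $\check K$ by $\check K_n = \sum_i b_{n,i} \prod_j \check k_{n,i}$; for such $\check K_n$ the convolution with the tent product splits into a convex combination of tensor products of one-dimensional convolutions $(k_{n,i,\delta} * g)(u_j)$. The task then reduces to a single 1D estimate: for any symmetric probability density $k$ satisfying the moment bounds in \eqref{e:Ksep3Moments}, show $(k_\delta * g)(u) \geq (1 - C\delta)\, g(u)$ pointwise on $[0,1]$. This can be verified directly: on each linear piece of $g$ the symmetric convolution preserves the value exactly; at the peak $u = 1/2$ the kink introduces a relative loss of order $\delta$, controlled by the upper first-moment bound; and near the endpoints $\{0,1\}$ the convolution actually exceeds $g$, with the surplus bounded below using the lower first-moment bound. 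Taking tensor products over the $d$ factors yields $\gamma = 1$, and passing to the limit $n\to\infty$ preserves the pointwise bound thanks to the uniformity in \eqref{e:Ksep3Moments}. Under Assumption~\ref{a:Gaussian}, one argues analogously using explicit Gaussian moment computations. The main obstacle is precisely the 1D estimate at the peak of $g$: no boundary mass is available to compensate there, and the $O(\delta)$ loss must be tracked with a constant small enough to fit inside $(1 - a\delta^\gamma)\, g$.
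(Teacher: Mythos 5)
Your overall structure — pull a single base density $\bm F_\zerotuple$ back through the Bernoulli shift, invoke scale covariance and the invariance of $\check K$ under $\check D_i$ to reduce \eqref{e:KepsFs} to a scale-free estimate $K_\delta * \bm F_\zerotuple \geq (1 - a\delta^\gamma)\bm F_\zerotuple$, and prove that base estimate separately — is exactly what the paper does. Your treatment of Assumption~\ref{a:Separated} also matches: tent-product base function, split $\check K_n$ into a convex combination of tensor densities, do the one-dimensional tent estimate, and pass to the limit. (One small misstatement there: near the endpoints of $[0,1]$ the convolution does \emph{not} uniformly exceed $g$ — for $0 < \check x \le 1/4$ the loss is genuinely positive but bounded by $O(\delta\, g(\check x))$ using only the \emph{upper} first-moment bound; the lower first-moment bound is not needed in one dimension and enters only in the $d\geq 2$ argument, to control the region where $\check F$ is already $O(\delta^d)$.)

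The real gap is the Gaussian case. You propose to use the same tensor-product tent function and ``argue analogously using explicit Gaussian moment computations,'' but a non-degenerate Gaussian with \emph{non-diagonal} covariance matrix does \emph{not} factor as $\prod_j \check k_j(\check x_j)$, so the convolution with a tent product does not split into one-dimensional convolutions and the componentwise argument collapses. Assumption~\ref{a:Gaussian} allows any non-degenerate covariance $\check A$ subject only to $[\check D_i, \check A] = 0$; since the $\check D_i$ may all be the identity, $\check A$ can be arbitrary non-diagonal. The paper's choice of $\check{\bm F}_\zerotuple$ in that case is not a tent product at all: it is the principal Dirichlet eigenfunction of the elliptic operator $\mathcal L = \tfrac12\sum \check a_{ij}\partial_i\partial_j$ on $[0,1]^d$, and the inequality $\check K_\epsilon * \check{\bm F}_\zerotuple \geq e^{-\check\lambda_0\epsilon^2}\check{\bm F}_\zerotuple$ follows from a parabolic comparison argument (the heat semigroup of $\mathcal L$ dominates the Dirichlet semigroup), giving $\gamma = 2$. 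That eigenfunction is generically \emph{not} a product, so your approach and the paper's genuinely diverge here, and yours leaves the non-diagonal Gaussian case unproved. If you want to keep a tent-based argument, you would need to either diagonalize $\check A$ (breaking the axis-alignment needed for the self-similarity step) or supply a new estimate that handles off-diagonal covariance directly; as stated the step is a hole, not just a rough edge.
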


When~$K_\epsilon$ is a periodized rescaled Gaussian, one can simply choose~$\bm F_s$ to be the principal eigenfunctions of an elliptic operator (see the proof in Section~\ref{s:FsExist}, below).
In the other case the construction is more involved and is presented in Section~\ref{s:F0Exist}, below.

The key to proving Theorem~\ref{t:TMixUpper} is to show that if the density of the distribution of $X_0$ is $\bm F_s$, then after time $\abs{s}+1$ the density of the distribution of $X_{\abs{s}+1}$ is bounded below, away from~$0$.
This is our next lemma.

\begin{lemma}\label{l:TstarLower}
  Let~$a, \gamma$ be as in Lemma~\ref{l:FsExist} and set~$\eta = (2a)^{1/\gamma}$.
There exist constants $\beta, \beta' \in (0,1]$ such that for all $\epsilon > 0$ and all $s \in \mathcal T$ such that $\ell_s \geq \eta \epsilon$, we have
  \begin{equation}\label{e:TStarLower}
    T_*^{n} \bm F_s
      \geq \begin{cases}
	\beta \bm F_{\sigma^n s}, & \quad \forall\; n \leq \abs{s}\,,
	\\
	\beta', & \quad \forall \;n \geq \abs{s} + 1\, .
      \end{cases}
  \end{equation}
\end{lemma}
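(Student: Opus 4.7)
The plan is to iterate the two properties from Lemma~\ref{l:FsExist}: the invariance (\ref{e:UStarFs}) under $U_*$ and the near-eigenfunction inequality (\ref{e:KepsFs}) under convolution with~$K_\epsilon$. Since $T_* f = K_\epsilon * U_* f$ by (\ref{e:DefUTStar}) and the integral operator $T_*$ is order-preserving on non-negative functions, I expect both parts of the lemma to follow by induction.

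For the first inequality, the base case is
\begin{equation*}
T_* \bm F_s = K_\epsilon * U_* \bm F_s = K_\epsilon * \bm F_{\sigma s} \geq \bigl(1 - a(\lambda_{\sigma s}\epsilon)^\gamma\bigr) \bm F_{\sigma s}\,,
\end{equation*}
using (\ref{e:UStarFs}) and then (\ref{e:KepsFs}) applied at $\sigma s$. Iterating this together with the positivity of $T_*$, an induction on $n$ yields
\begin{equation*}
T_*^n \bm F_s \geq \Bigl( \prod_{k=1}^n \bigl(1 - a(\lambda_{\sigma^k s}\epsilon)^\gamma\bigr) \Bigr) \bm F_{\sigma^n s}
\quad\text{for all } n \leq |s|\,.
\end{equation*}
The main technical step is to show this product admits a positive lower bound that is uniform in $s$, $n$, and $\epsilon$. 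Because $\varphi_{s_0}$ acts on $\cyl_s$ as an affine expansion of factor $p_{s_0}^{-1/d} \geq p_{\max}^{-1/d}$, we have $\ell_{\sigma s} = p_{s_0}^{-1/d} \ell_s$, so by iteration $\lambda_{\sigma^k s} \leq \lambda_s \, p_{\max}^{k/d}$. The threshold $\eta = (2a)^{1/\gamma}$ is chosen precisely so that the hypothesis $\ell_s \geq \eta \epsilon$ forces $a(\lambda_s \epsilon)^\gamma \leq 1/2$, hence $a(\lambda_{\sigma^k s}\epsilon)^\gamma \leq \tfrac{1}{2} p_{\max}^{k\gamma/d} \leq \tfrac12$. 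Summing this geometric series and using $\log(1-x) \geq -2x$ on $[0,1/2]$ then yields
\begin{equation*}
\prod_{k=1}^n \bigl(1 - a(\lambda_{\sigma^k s}\epsilon)^\gamma\bigr) \geq \exp\Bigl( -\frac{p_{\max}^{\gamma/d}}{1 - p_{\max}^{\gamma/d}} \Bigr) =: \beta \in (0,1]\,,
\end{equation*}
which gives the first bound.

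For the second bound ($n \geq |s|+1$), taking $n = |s|$ in the first bound gives $T_*^{|s|} \bm F_s \geq \beta \bm F_\zerotuple$. Since $\bm F_\zerotuple$ is continuous and strictly positive on the compact torus $\T^d = \cyl_\zerotuple$, its minimum $m := \min_{\T^d} \bm F_\zerotuple$ is strictly positive, so $T_*^{|s|} \bm F_s \geq \beta m$. Because $\varphi$ preserves Lebesgue measure, one has $U_* 1 = 1$, so $T_* 1 = K_\epsilon * 1 = 1$; the positive operator $T_*$ thus maps constants to themselves and preserves constant lower bounds. Iterating gives $T_*^n \bm F_s \geq \beta m$ for all $n \geq |s|$, and setting $\beta' := \beta m \in (0,1]$ finishes the proof.

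The only nontrivial input is the convergence of the geometric series in the product estimate, and even there the argument is mechanical once one identifies the correct scaling $\lambda_{\sigma^k s} \leq \lambda_s \, p_{\max}^{k/d}$ coming from the Bernoulli shift structure. All the serious work has been absorbed into Lemma~\ref{l:FsExist}; this lemma is essentially a bookkeeping exercise combining its two conclusions with the positivity and invariance properties of $T_*$.
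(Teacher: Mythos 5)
Your argument for the first inequality is correct and follows the paper's line of reasoning (the paper packages the product as an exponential via $1-x \geq e^{-Cx}$ on $[0,1/2]$ and sums the geometric series, but the content is the same and the resulting $\beta$ is merely a different explicit constant). The second half, however, contains a genuine gap: you assert that $m := \min_{\T^d} \bm F_\zerotuple > 0$, but this is false. By Lemma~\ref{l:F0Exist} the function $\check{\bm F}_\zerotuple$ satisfies $\set{\check{\bm F}_\zerotuple > 0} = (0,1)^d$, so it vanishes on $\partial[0,1]^d$; after projection to $\T^d$, $\bm F_\zerotuple$ therefore vanishes on the image of that boundary (e.g.\ at $x = 0$), and $\min_{\T^d}\bm F_\zerotuple = 0$. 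Consequently $T_*^{\abs{s}}\bm F_s \geq \beta \bm F_\zerotuple$ does not by itself give a positive constant lower bound, which is also why the lemma is stated for $n \geq \abs{s}+1$ rather than $n \geq \abs{s}$.

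The missing ingredient is one more application of $T_*$ together with condition~\ref{i:mixing} of Assumption~\ref{a:Cubes}. That condition guarantees that for every $\check x \in \partial[0,1]^d$ there is some branch $\check\varphi_i$ mapping an interior point $\check y \in (0,1)^d$ to $\check x$; since
\begin{equation*}
U_* \bm F_\zerotuple(x) = \sum_{i=1}^M p_i\, \bm F_\zerotuple\bigl(\varphi_i^{-1}(x)\bigr)\,,
\end{equation*}
at every $x$ at least one preimage $\varphi_i^{-1}(x)$ lies in the interior where $\bm F_\zerotuple > 0$, so $U_* \bm F_\zerotuple > 0$ on all of $\T^d$, and hence so is $T_*\bm F_\zerotuple = K_\epsilon * U_*\bm F_\zerotuple$. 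Continuity then gives $\min_{\T^d} T_*\bm F_\zerotuple > 0$, and applying $T_*$ once more to your bound $T_*^{\abs{s}}\bm F_s \geq \beta \bm F_\zerotuple$ yields $T_*^{\abs{s}+1}\bm F_s \geq \beta\,\min_{\T^d} T_*\bm F_\zerotuple =: \beta' > 0$. From there your observation that $T_*$ preserves constant lower bounds (because $T_*1 = 1$) closes the induction for all $n \geq \abs{s}+1$.
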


By bounding the density of $X_1$ below by a combination of the functions $\bm F_s$ above, we claim that any initial distribution becomes bounded away from~$0$ in time $O(\abs{\ln \epsilon})$.

\begin{lemma}\label{l:efnBound}
  Let $\eta > 0$ be as in Lemma~\ref{l:TstarLower}, and define
  \begin{equation}\label{e:Ndef}
    N \defeq \ceil[\bigg]{ \frac{d \ln (\epsilon \eta)}{\ln (p_{\max}) } }\,.
  \end{equation}
  There is $\chi > 0$ such that for all sufficiently small~$\epsilon > 0$ and every probability measure~$\mu$, we have
  \begin{equation*}
    \frac{d (T_*^{N+2} \mu)}{d\pi} \geq \chi \,.
  \end{equation*}
\end{lemma}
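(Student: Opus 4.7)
The plan is to lower-bound $T_*\mu$ by a positive combination of the bump functions $\bm F_s$ sitting on cylinders of size comparable to $\eta\epsilon$, and then propagate that bound forward using Lemma~\ref{l:TstarLower}.

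First I would construct a partition $\set{\cyl_s \st s \in S}$ of $\T^d$ by a stopping rule on the tree of cylinders: starting from $\cyl_\zerotuple = \T^d$, recursively subdivide $\cyl_s$ whenever every one of its $M$ children has side length at least $\eta\epsilon$, and stop otherwise. The resulting leaves $S$ satisfy $\eta\epsilon \leq \ell_s \leq \eta\epsilon/p_{\min}^{1/d}$, and the combination $\pi(\cyl_s) = \ell_s^d \geq (\eta\epsilon)^d$ together with $\pi(\cyl_s) \leq p_{\max}^{\abs{s}}$ forces $\abs{s} \leq N$ for every $s \in S$.

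The heart of the argument is the single-step pointwise estimate that, for every $s \in S$ and every $y \in \cyl_s$,
\begin{equation*}
  T_*\mu(y) \geq c_2\, \varphi_*\mu(\cyl_s)\, \bm F_s(y),
\end{equation*}
for a constant $c_2 > 0$ depending only on $d$, $\eta$, $p_{\min}$, and the noise. To obtain it, write $T_*\mu(y) = \int K_\epsilon(y-z)\, \varphi_*\mu(dz)$ and discard everything outside $z \in \cyl_s$. For $y, z$ both in the cube $\cyl_s$ of side at most $\eta\epsilon/p_{\min}^{1/d}$, the displacement $(y-z)/\epsilon$ lies in a fixed compact subset of the fundamental domain, so either Assumption~\ref{a:Gaussian} or Assumption~\ref{a:Separated} yields $K_\epsilon(y-z) \geq c_K\epsilon^{-d}$ for an explicit $c_K > 0$; this produces $T_*\mu(y) \geq c_K\epsilon^{-d} \varphi_*\mu(\cyl_s)$. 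On the other hand, iterating the identity $U_*\bm F_s = \bm F_{\sigma s}$ from Lemma~\ref{l:FsExist} yields the pullback formula $\bm F_s(x) = \bm F_\zerotuple(\varphi^{\abs{s}}(x))/\pi(\cyl_s)$ on $\cyl_s$, and hence $\norm{\bm F_s}_{L^\infty} \leq \norm{\bm F_\zerotuple}_{L^\infty}/\pi(\cyl_s) \leq \norm{\bm F_\zerotuple}_{L^\infty}/(\eta\epsilon)^d$. Combining the two bounds yields the displayed estimate with $c_2 = c_K \eta^d/\norm{\bm F_\zerotuple}_{L^\infty}$.

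Since the cylinders in $S$ are disjoint and each $\bm F_s$ is supported on $\cyl_s$, the pointwise estimate sums to $T_*\mu \geq c_2 \sum_{s \in S} \varphi_*\mu(\cyl_s)\, \bm F_s$ almost everywhere. Applying the positivity-preserving operator $T_*^{N+1}$ and invoking Lemma~\ref{l:TstarLower}---which applies to every $s \in S$ because $\abs{s} \leq N$ and $\ell_s \geq \eta\epsilon$---gives
\begin{equation*}
  T_*^{N+2}\mu \geq c_2\beta' \sum_{s \in S} \varphi_*\mu(\cyl_s) = c_2\beta',
\end{equation*}
since the cylinders partition $\T^d$ and $\varphi_*\mu$ is a probability measure. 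Setting $\chi = c_2 \beta'$ finishes the argument. The step I expect to be most delicate is the uniform $L^\infty$ bound on $\bm F_\zerotuple$; this is not explicitly stated in Lemma~\ref{l:FsExist}, but it should fall out of the explicit construction in Section~\ref{s:FsExist} (a principal Dirichlet-type eigenfunction of $K_\epsilon*$ in the Gaussian case, or the tensor construction under Assumption~\ref{a:Separated}), and the pullback identity then transfers the bound to every $\bm F_s$.
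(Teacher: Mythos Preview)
Your proposal is correct and follows essentially the same approach as the paper: partition the torus into cylinders at scale~$\sim\eta\epsilon$, use the kernel lower bound to dominate~$T_*\mu$ from below by a nonnegative combination~$\sum_s \varphi_*\mu(\cyl_s)\,\bm F_s$, then push forward by~$T_*^{N+1}$ via Lemma~\ref{l:TstarLower} and use that the coefficients sum to~$1$. The only cosmetic difference is the stopping rule: the paper stops one level deeper (its leaves satisfy $p_{\min}^{1/d}\eta\epsilon \leq \ell_s \leq \eta\epsilon$ and $\abs{s}\leq N+1$, whereas yours satisfy $\eta\epsilon \leq \ell_s < \eta\epsilon/p_{\min}^{1/d}$ and $\abs{s}\leq N$), which makes your application of the hypothesis~$\ell_s\geq\eta\epsilon$ in Lemma~\ref{l:TstarLower} slightly more direct.
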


The expression $d (T_*^{N+2} \mu) / d\pi$ denotes the Radon--Nikodym derivative of~$T_*^{N+2} \mu$ with respect to the Lebesgue measure~$\pi$.
Momentarily postponing the proofs of Lemmas~\ref{l:FsExist}--\ref{l:efnBound}, we prove Theorem~\ref{t:TMixUpper}.
\begin{proof}[Proof of Theorem~\ref{t:TMixUpper}]
  Let~$\mu_0$ be any probability measure, and inductively define
  \begin{equation*}
    \mu_{n+1}  = \frac{T_*^{N+2} \mu_n - \chi \pi}{1 - \chi}\,.
  \end{equation*}
  By Lemma~\ref{l:efnBound} we see that~$\mu_n$ is a positive measure, and hence by our normalization must be a probability measure.
  Since
  \begin{equation*}
    T_*^{n(N+2)}\mu_0 = (1 - (1-\chi)^n) \pi + (1 - \chi)^n \mu_n\,,
  \end{equation*}
  we note
  \begin{equation*}
    \norm{T_*^{n(N+2)} \mu_0 - \pi}_\TV \leq (1 - \chi)^n\,,
  \end{equation*}
  which immediately implies
  \begin{equation*}
    \tmix \leq \frac{(N+2) \ln 2}{\abs{\ln(1 - \chi)} }\,.
  \end{equation*}
  Using the definition of~$N$ (equation~\eqref{e:Ndef}) we obtain~\eqref{e:tMixGaussian} as claimed.
  \medskip
\end{proof}

\subsection{A lower bound for \texorpdfstring{$T_*^n \bm F_s$}{T*n Fs} (Lemma~\ref{l:TstarLower}).}

The main idea behind the proof of Lemma~\ref{l:TstarLower} is to control the mass that escapes the envelope of $\bm F_s$ through the noise.
Once this is established, repeated application of $U_*$ to~$\bm F_s$ will give a function that bounded away from~$0$.

\begin{proof}[Proof of Lemma~\ref{l:TstarLower}]
  Using~\eqref{e:UStarFs} and~\eqref{e:KepsFs} we see
  \begin{equation}\label{e:TStarFs1}
    T_* \bm F_s
    = K_\epsilon * U_* \bm F_s
    \geq (1 - a (\lambda_{\sigma s} \epsilon)^\gamma ) \bm F_{\sigma s}\,.
  \end{equation}
  Since $\ell_s \geq \epsilon \eta = \epsilon (2a)^{1/\gamma}$ by assumption, we must have $(1 - a (\lambda_s \epsilon)^\gamma ) \geq 1/2$. Therefore,
  \begin{equation}\label{e:expLower}
    1 - a (\lambda_{\sigma s} \epsilon)^\gamma  \geq e^{-C (\lambda_{\sigma s} \epsilon)^\gamma}
  \end{equation}
  where $C = 2 \ln 2$ is independent of $\epsilon$.
  Using this and iterating~\eqref{e:TStarFs1} gives
  \begin{equation}\label{e:TstarS}
    T_*^{n} \bm F_s
      \geq \paren[\Big]{
	  \prod_{k = 1}^{n} (1 -  a (\lambda_{\sigma^k s} \epsilon)^\gamma )
	} \bm F_{\sigma^n s}
      \geq
	\exp\paren[\Big]{ -C  \epsilon^\gamma \sum_{k=1}^{n} \lambda_{\sigma^k s}^\gamma  }
	\bm F_{\sigma^n s}
	\,.
  \end{equation}
  for any $n \in \N$.

  The sum in the exponential is easily bounded for $n \leq \abs{s}$.
  Indeed, if  $s = (s_0, \dots, s_{n'-1}) \in \mathcal T $, then 
  \begin{equation*}
    \lambda_{\sigma s}
      = \frac{1}{\pi(\cyl_{\sigma s})^{1/d}}
      = \frac{p_{s_0}^{1/d}}{\pi(\cyl_{s})^{1/d}}
      \leq p_{\max}^{1/d} \lambda_s\,.
  \end{equation*}
  Hence for every $n \leq \abs{s}$ we have
  \begin{equation}\label{e:tmp-2023-01-30-1}
    \epsilon^\gamma \sum_{k = 1}^{n} \lambda_{\sigma^k s}^\gamma
      \leq \frac{(\epsilon \lambda_{\sigma s})^\gamma}{1 - p_{\max}^{\gamma/d} }
      \leq  \frac{1}{\eta^\gamma (1 - p_{\max}^{\gamma/d}) }\,,
  \end{equation}
  which is finite and independent of $n$ and $\epsilon$.
  Using this in~\eqref{e:TstarS} implies~\eqref{e:TStarLower} holds, for all $n \leq \abs{s}$ and some nonnegative, $\epsilon$-independent constant~$\beta > 0$ that can be computed explicitly to be
  \[
  \beta = \exp \paren[\Big]{\frac{- 2 \ln (2) }{2 a( 1 - p_{\max}^{\gamma/d}) }} \,.
  \]

  To handle the case $n > \abs{s}$ we note first that we have already proved
  \begin{equation}\label{e:TstarS1}
    T_*^{\abs{s}} \bm F_s \geq \beta \bm F_\zerotuple\,.
  \end{equation}
  By assumption~$\check{\bm F}_\zerotuple$ is continuous and strictly positive in~$(0, 1)^d$, and hence condition~\ref{i:sep} in Assumption~\ref{a:Cubes} implies $U_* \bm F_\zerotuple > 0$ on $\T^d$.
  This in turn implies $T_* \bm F_\zerotuple > 0$ on~$\T^d$.
  Since $\bm F_\zerotuple$ is continuous, this implies $\min_{\T^d} \bm F_\zerotuple > 0$, and hence applying~$T_*$ to~\eqref{e:TstarS1} shows
  \begin{equation*}
    T_*^{\abs{s}+1} \bm F_s \geq \beta \paren[\Big]{ \min_{\T^d} T_* \bm F_\zerotuple } = \beta'\,.
  \end{equation*}
  This implies~\eqref{e:TStarLower} for $n = \abs{s} + 1$, and some finite constant~$\beta'$ that is independent of~$s, \epsilon$.
  Since~$T_*$ can not decrease a nonnegative minimum, we also obtain~\eqref{e:TStarLower} for all $n \geq \abs{s} + 1$, concluding the proof.
\end{proof}

\subsection{A lower bound on \texorpdfstring{$T_*^n \mu$}{T*n mu} (Lemma~\ref{l:efnBound})}
\label{s:efnBoundProof}
The main idea behind the proof of Lemma~\ref{l:efnBound} is to partition the torus into cylinder sets $\cyl_s$ with side length~$\ell_s = O(\epsilon)$.
If we apply~$T_*$ once to the initial measure~$\mu$, then $X_1$ has a density that is roughly uniform on sets at the scale $\epsilon$, and hence can be bounded from below by linear combination of functions~$\bm F_s$ with non-negative coefficients.
Applying Lemma~\ref{l:TstarLower} to this will allow us to show that the distribution eventually becomes bounded away from~$0$.
\begin{proof}[Proof of Lemma~\ref{l:efnBound}]
We first define~$\mathcal{S}_\epsilon$ by 
  \begin{equation*}
    \mathcal S_{\epsilon} = \set[\big]{ s \in \mathcal T \st
	  ~\ell_{\sigma s} > \eta  \epsilon, \quad
	\ell_{s} \leq \eta  \epsilon },
  \end{equation*}
where we recall that $a$ is the constant appearing in~Lemma~\ref{l:FsExist} (see the rightmost figure in Figure~\ref{f:CylSubdivisionFigure} for an illustration).
For all $s \in \mathcal{T}$ with $|s| \geq 1$, the side length~$\ell_s$ satisfies 
\[
\left(p_{\min}^{1/d}\right)^{|s|} \leq \ell_s \leq \left(p_{\max}^{1/d}\right)^{|s|}, \quad \quad p_{\min}^{1/d} \ell_{\sigma s}  \leq \ell_{s} \leq p_{\max}^{1/d} \ell_{\sigma s}. 
\]
So, for every $s \in \mathcal{S}_\epsilon$ we must have
\begin{equation}\label{e:diamCSN}
 p_{\min}^{1/d} \eta  \epsilon \leq  \ell_{s} \leq \eta  \epsilon,
\end{equation}
and
\begin{equation}
|s| - 1 \leq N = \left \lfloor \frac{d \ln (\epsilon \eta)}{\ln p_{\max}} \right \rfloor. \label{absslower}
\end{equation}

  Since $\check K \in L^1$, the measure $T_* \mu$ is absolutely continuous with respect to the Lebesgue measure~$\pi$, and we let
  \begin{equation*}
    f \defeq \frac{d (T_*\mu)}{d\pi} \,,
  \end{equation*}
  denote the Radon--Nikodym derivative of~$T_* \mu$.
  Define
  \begin{equation}\label{e:csDef}
    c_s \defeq \frac{1}{\norm{\bm F_s}_{L^\infty(\cyl_s)}} \inf_{x \in \cyl_s} f(x) \,, \quad \quad  s \in \mathcal{S}_\epsilon
  \end{equation}
  and note each $c_s$ is nonnegative and
  \begin{equation}\label{e:fLower2}
    f(x) \geq \sum_{s \in \mathcal S_\epsilon} c_s \bm F_s(x) \,.
  \end{equation}
  We now claim that
  \begin{equation}\label{e:fEfnLower}
    \sum_{s \in \mathcal S_\epsilon} c_s \geq \frac{1}{C_0} \,,
  \end{equation}
  for some constant~$C_0 > 0$ that is independent of~$\epsilon$.

  To prove~\eqref{e:fEfnLower}, let $\nu = U_*\mu$.
  Note that the lower bound in~\eqref{e:diamCSN} implies that for any $s \in \mathcal S_\epsilon$, $\cyl_s$ is a cube with side length at most $\eta \epsilon$.
  Thus, for any $s \in \mathcal S_\epsilon$ and $x \in \cyl_s$, we have
  \begin{align}\label{e:fLower1}
    f(x)  = K_\epsilon * \nu (x)  \geq \int_{\cyl_s} K_\epsilon (x - y) \, \nu(dy)  \geq \frac{\nu(\cyl_s)}{\epsilon^d} \kappa
  \end{align}
where
\[
\kappa = \inf_{\check x, \check y \in [0, \eta)^d} \check K(\check x - \check y) > 0.
\]
By~\eqref{e:UStarFs} and~\eqref{e:diamCSN} we note
  \begin{equation}\label{e:FsLower1}
    \norm{\bm F_s}_{L^\infty}
      = \frac{1}{\pi(\cyl_s)} \norm{\bm F_\zerotuple}_{L^\infty}
      \leq \frac{C_1}{\epsilon^d}\,,
  \end{equation}
  for some constant~$C_1$ that is independent of~$\epsilon$.
  Using~\eqref{e:FsLower1} and~\eqref{e:fLower1} in \eqref{e:csDef} we obtain
  \begin{equation*}
    c_s \geq \frac{\nu(\cyl_s) \kappa}{C_1},\, \quad \forall \; s \in \mathcal{S}_\epsilon.
  \end{equation*}
  Since the sets $\{ \cyl_s \;|\; s \in \mathcal S_\epsilon\}$ form a partition and~$\nu$ is a probability measure, summing the above over $s \in \mathcal S_\epsilon$ yields~\eqref{e:fEfnLower} as desired, with $C_0 = C_1/\kappa$.

  Now, to finish we note that~$T_*$ is monotone on nonnegative functions.  That is, if $g_1$, $g_2$ are any two functions such that $0 \leq g_1 \leq g_1$, then $0 \leq T_* g_1 \leq T_* g_2$.  By \eqref{absslower}, $|s| \leq 1 + N$ for all $s \in \mathcal{S}_\epsilon$. Therefore, using~\eqref{e:diamCSN}, \eqref{e:fLower2} and Lemma~\ref{l:TstarLower} implies
  \begin{align*}
    T_*^{N+1} f &\geq \sum_{s \in \mathcal S_\epsilon} c_s T_*^{N+1} \bm F_s
      \geq \sum_{s \in \mathcal S_\epsilon} c_s  \beta' \geq \frac{\beta'}{C_0}.
  \end{align*}
where $C_0 > 0$ is the constant from~\eqref{e:fEfnLower}, independent of $\epsilon$.
Choosing $\chi = \beta'/C_0$, the proof is complete.
\end{proof}

\subsection{Constructing \texorpdfstring{${\bm F}_s$}{Fs} (Lemma \ref{l:FsExist}).}\label{s:FsExist}

We now construct the family of functions~$\bm F_s$.
In light of the property~\eqref{e:UStarFs}, it is enough to find one function~$\check{\bm F}_\zerotuple$ that satisfies a bound like~\eqref{e:KepsFs}.

\begin{lemma}\label{l:F0Exist}
  Suppose either Assumption~\ref{a:Gaussian} or~\ref{a:Separated} holds.
  There exists a continuous function~$\check{\bm F}_\zerotuple \colon \R^d \to [0, \infty)$, and constants $a < \infty$, $\gamma > 0$ such that
  \begin{gather}
    \set{ \check{\bm F}_\zerotuple > 0 } = (0, 1)^d\,,
    \qquad\int_{\R^d} \check{\bm F}_\zerotuple \, d\check \pi = 1\,,
    \\
    \label{e:KepsFzt}
    \text{and}\quad \check K_\epsilon * \check{\bm F}_\zerotuple \geq (1 - a \epsilon^\gamma) \check{\bm F}_\zerotuple\,,
  \end{gather}
  for all~$\epsilon > 0$.
\end{lemma}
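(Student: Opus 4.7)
The proof naturally splits by which assumption holds. In each case I would construct $\check{\bm F}_\zerotuple$ as a principal Dirichlet ``ground state'' for an operator naturally associated with the noise kernel.

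\emph{Gaussian case.} Let $\Sigma$ denote the non-degenerate covariance of $\check K$, set $\mathcal L \defeq \tfrac12 \operatorname{tr}(\Sigma D^2)$, and let $\phi_1 > 0$ be the principal Dirichlet eigenfunction of $-\mathcal L$ on the open cube $(0,1)^d$ with eigenvalue $\lambda_1 > 0$. Take $\check{\bm F}_\zerotuple \defeq c\,\phi_1 \mathbf 1_{(0,1)^d}$, with $c$ chosen so $\int \check{\bm F}_\zerotuple \, d\check\pi = 1$. Elliptic boundary regularity makes $\phi_1$ vanish continuously on $\partial(0,1)^d$, so extension by zero is continuous on $\R^d$ and positive exactly on $(0,1)^d$. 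Under Assumption~\ref{a:Gaussian}, $\check K_\epsilon$ is the transition density at time $\epsilon^2$ of a Brownian motion $B$ with generator $\mathcal L$. Letting $\tau$ be the first exit of $\check x + B_\cdot$ from $(0,1)^d$ and using $\phi_1 \geq 0$ together with $\{\tau > \epsilon^2\} \subseteq \{\check x + B_{\epsilon^2} \in (0,1)^d\}$,
\[
\check K_\epsilon * \check{\bm F}_\zerotuple(\check x) = \E\bigl[c\,\phi_1(\check x + B_{\epsilon^2})\bigr] \geq \E\bigl[c\,\phi_1(\check x + B_{\epsilon^2})\mathbf 1_{\{\tau > \epsilon^2\}}\bigr] = c\,e^{-\lambda_1 \epsilon^2}\phi_1(\check x),
\]
the final equality being the Feynman--Kac representation of the Dirichlet heat semigroup evaluated on its principal eigenfunction. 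Since $e^{-\lambda_1 \epsilon^2} \geq 1 - \lambda_1 \epsilon^2$, this gives \eqref{e:KepsFzt} with $\gamma = 2$ and $a = \lambda_1$.

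\emph{Separated case.} Here no semigroup interpretation is available, so I would instead exploit the approximate tensor structure of $\check K$ and reduce to a one-dimensional claim. Take $\check{\bm F}_\zerotuple(\check x) = c\prod_{j=1}^d \check f_0(\check x_j)$ with $\check f_0 \defeq \sin(\pi\cdot)\mathbf 1_{[0,1]}$, a continuous nonnegative bump supported on $[0,1]$ and positive on $(0,1)$. For each separable approximant $\check K_n = \sum_i b_{n,i}\prod_j \check k_{n,i}$, convolution tensorises:
\[
\check K_{n,\epsilon} * \check{\bm F}_\zerotuple(\check x) = c\sum_i b_{n,i}\prod_{j=1}^d (\check k_{n,i,\epsilon} * \check f_0)(\check x_j).
\]
It therefore suffices to prove a uniform 1D inequality $\check k_\epsilon * \check f_0 \geq (1 - a_0 \epsilon)\check f_0$ for every even probability density $\check k$ satisfying \eqref{e:Ksep3Moments}, with $a_0 = a_0(A)$. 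Expanding the product yields $\check K_{n,\epsilon} * \check{\bm F}_\zerotuple \geq (1 - da_0\epsilon)\check{\bm F}_\zerotuple$ uniformly in $n$, and since $\check{\bm F}_\zerotuple$ is continuous and bounded the $L^1$ convergence $\check K_{n,\epsilon} \to \check K_\epsilon$ upgrades to uniform convergence of the convolutions, so the inequality passes to the limit, establishing \eqref{e:KepsFzt} for $\check K_\epsilon$ with $\gamma = 1$.

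\emph{Main obstacle.} The delicate step is the 1D inequality in the boundary region $\check x \lesssim \epsilon$, where $\check f_0(\check x) = O(\epsilon)$ and a naive $L^\infty$ perturbation bound is useless. The plan is to decompose
\[
\check k_\epsilon * \check f_0(\check x) = \check k_\epsilon * \sin(\pi\cdot)(\check x) - \check k_\epsilon * \bigl(\sin(\pi\cdot)\mathbf 1_{\R\setminus[0,1]}\bigr)(\check x).
\]
The first term equals $\bigl(\int \check k(w)\cos(\pi\epsilon w)\,dw\bigr)\sin(\pi\check x)$, whose prefactor is $\geq 1 - \pi A\epsilon$ by evenness of $\check k$ and the first-moment bound. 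For the subtracted term one must show nonpositivity on $(0,1)$: I would pair the integrals over $(k, k+1)$ and $(-k-1, -k)$ using evenness of $\check k_\epsilon$, then group consecutive $k$-values to exploit the oscillation of $\sin(\pi y)$, so that the off-intervals adjacent to $(0,1)$---on which the extended sine has sign opposite to $\sin(\pi\check x)$---dominate. The lower bound $\int_0^{1/2} w\,\check k(w)\,dw \geq 1/A$ prevents the noise from pathologically concentrating at the origin, ensuring the dominant negative contribution is genuinely of order $\epsilon$ rather than being lost in cancellation. The invariance~\eqref{e:KSymm} does not enter in this lemma, but will be needed in Lemma~\ref{l:FsExist} when pushing $\check{\bm F}_\zerotuple$ forward onto oriented cylinder sets.
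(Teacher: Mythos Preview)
Your Gaussian case is the paper's argument verbatim, recast probabilistically: the paper uses a PDE sub-solution comparison (with $e^{-\lambda_0\epsilon^2 t}\check{\bm F}_\zerotuple$ as sub-solution to $\partial_t\theta=\epsilon^2\mathcal L\theta$), you use Feynman--Kac. Same content.

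In the separated case you take a genuinely different route. The paper does \emph{not} use $\sin(\pi\cdot)$; it uses the tent function $\check f(\check x)=\min(\check x,1-\check x)\mathbf 1_{[0,1]}$. The piecewise-linear structure makes the 1D estimate a direct two-line computation: for $\check x\in(0,1/4]$ one has $\check f(\check x)-\check f(\check x-\check y)\le\check y$ exactly on the relevant range, and for $\check x\in(1/4,1/2]$ one has $\check f(\check x)\ge 1/4$, so the Lipschitz bound $|\check f(\check x)-\check f(\check x-\check y)|\le|\check y|$ already gives $\check f-\check k_\epsilon*\check f\le 4A\epsilon\check f$. No oscillatory tails, no sign analysis.

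Your sine-based approach is also workable, but your plan for the ``main obstacle'' is misdirected. The subtracted term $G(\check x)=\check k_\epsilon*(\sin(\pi\cdot)\mathbf 1_{\R\setminus[0,1]})(\check x)$ is \emph{not} always nonpositive on $(0,1)$, so the pairing/grouping strategy aimed at proving nonpositivity is chasing the wrong target. What actually works is weaker and easier: for $\check x\in(0,1/2]$, writing $G(\check x)$ as an integral over $z>0$ and splitting at $1-\check x$, the near part is nonpositive while the far part is bounded by $2\sin(\pi\check x)\int_{1-\check x}^\infty\check k_\epsilon\le 4A\epsilon\sin(\pi\check x)$ via Markov's inequality and the first-moment bound. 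So $G(\check x)\le 4A\epsilon f_0(\check x)$, which combined with your prefactor estimate gives $a_0=(\pi+4)A$. In particular the lower moment bound $\int_0^{1/2}w\check k\ge 1/A$ is not needed here; it plays no role in the 1D inequality for either your bump or the paper's. Your tensor-product passage to dimension $d$ and the $L^1$ limiting argument are fine and, if anything, cleaner than the paper's induction in its Lemma on separated kernels.
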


Given the function~$\check{\bm F}_\zerotuple$, we construct the functions~$\bm F_s$ by rescaling, and projecting to the torus.

\begin{proof}[Proof of Lemma~\ref{l:FsExist}]
  For any~$s \in \mathcal T$ we define
  \begin{equation*}
    \check{\bm F}_s
    = \frac{1}{\pi(\cyl_s)} \one_{\check \cyl_s} \check{\bm F}_\zerotuple\circ \check\varphi^{\abs{s}} \,,
  \end{equation*}
  where
  \begin{equation*}
    \check \cyl_s = \set{ \check x \in [0, 1)^d \st \Pi(\check x) \in \cyl_s }\,,
    \quad\text{and}\quad
    \check\varphi(\check x) = \check\varphi_i(\check x) \quad\text{if } \check x \in \check E_i\,.
  \end{equation*}
  We project these functions to the torus by defining
  \begin{equation*}
    \bm F_s(x) \defeq \check{\bm F}_s(\check x)\,,
  \end{equation*}
  where as before $\check x \in [0, 1)^d$ is the unique point such that $\Pi(\check x) = x$.
  We claim the functions~$\bm F_s$ satisfy all the properties required in the statement of Lemma~\ref{l:FsExist}.

  Clearly~$\bm F_s$ is supported on~$\cyl_s$, and is strictly positive in the interior.
  By definition of~$U_*$ we see~\eqref{e:UStarFs} holds, which in turn implies~$\norm{\bm F_s}_{L^1} = \norm{\check{\bm F}_\zerotuple}_{L^1} = 1$.
  It only remains to verify~\eqref{e:KepsFs}.
  To see this, note first that for all $x \in \cyl_s$ we have
  \begin{equation*}
    \check \varphi^{\abs{s}} (x) = \lambda_s \check D \check x + \check e_s
  \end{equation*}
  for some $\check e_s \in \R^d$, and orthogonal matrix~$D$ that is a product of the matrices~$\check D_i$ in~\eqref{e:PhiDef}.
  Thus, setting $n = \abs{s}$, we see
  \begin{align*}
    \check K_\epsilon * \check{\bm F}_{s}(\check x)
      &= \frac{1}{\pi(\cyl_s) \epsilon^d} \int_{\R^d} \check K\paren[\Big]{ \frac{\check y}{\epsilon} } \check{\bm F}_\zerotuple \paren[\Big]{ \lambda_s \check D (\check x - \check y) + \check e_s } \, d\check y
      \\
      &= \frac{1}{\epsilon^d} \int_{\R^d} \check K\paren[\Big]{ \frac{\check D^{-1} \check y}{\lambda_s \epsilon} } \check{\bm F}_\zerotuple ( \check \varphi^n ( \check x ) - \check y ) \, d \check y
      = \lambda_s^d (\check K_{\lambda_s \epsilon } * \check{\bm F}_\zerotuple)(\check \varphi^n (\check x))\,,
  \end{align*}
  where the last equality followed because of~\eqref{e:KSymm}.
  Using~\eqref{e:KepsFzt} (with~$\epsilon$ replaced by $\lambda_s \epsilon$), we note
  \[
    (\check K_{\lambda_s \epsilon } * \check{\bm F}_\zerotuple) \geq\paren{ 1 - a (\lambda_s \epsilon)^\gamma }
    \check{\bm F}_\zerotuple,
  \]
  and hence
  \begin{align*}
    \check K_\epsilon * \check{\bm F}_{s}(\check x)  &\geq \frac{1}{\pi(\cyl_s)} \paren{ 1 - a (\lambda_s \epsilon)^\gamma }
    \check{\bm F}_\zerotuple\circ \check \varphi^n (\check x)
    = \paren{ 1 - a (\lambda_s \epsilon)^\gamma }
    \check{\bm F}_s(\check x)\,.  
  \end{align*}
  This implies
  \begin{equation*}
    K_\epsilon * \bm F_s(x)
    \geq \check K_\epsilon * \check{\bm F}_s(\check x)
    \geq (1 - a (\lambda_s \epsilon)^\gamma ) \check{\bm F}_s( \check x)
    = (1 - a (\lambda_s \epsilon)^\gamma ) \bm F_s(x) \,,
  \end{equation*}
  yielding~\eqref{e:KepsFs} as claimed.  
\end{proof}

It remains to prove Lemma~\ref{l:F0Exist}.
Under Assumption~\ref{a:Gaussian}, we can just choose~$\bm F_s$ to be the principal eigenfunctions of the associated elliptic operator, and we will obtain~\eqref{e:KepsFs} with~$\gamma = 2$.
We do this next.

\begin{proof}[Proof of Lemma~\ref{l:FsExist} under Assumption~\ref{a:Gaussian}]
  Let $\check A = (\check a_{i,j})$ be the covariance matrix of~$\check K$, and define the differential operator~$\mathcal L$ by
  \begin{equation*}
    \mathcal L = \frac{1}{2} \sum_{i,j = 1}^d \check a_{i,j} \partial_i \partial_j\,.
  \end{equation*}
  Since~$\check A$ is non-degenerate, by assumption, the operator~$\mathcal L$ is elliptic.
  Let~$\check{\bm F}_\zerotuple$ be the principal eigenfunction of~$\mathcal L$ on the unit square $[0, 1]^d$, with Dirichlet boundary conditions, normalized so that $\norm{\check{\bm F}_\zerotuple}_{L^1} = 1$.
  Since~$\mathcal L$ is elliptic we know that $\check{\bm F}_\zerotuple$ can be chosen to be strictly positive on $(0, 1)^d$, and the associated principal eigenvalue~$\check \lambda_0 > 0$.

  Define the functions~$\theta, \underline{\theta}$ by
  \begin{equation*}
    \theta(t,x) = \check K_{\epsilon \sqrt{t}} * \check{\bm F}_\zerotuple(x)\,,
    \quad\text{and}\quad
    \underline{\theta}(t,x)
      = e^{ -\check \lambda_0 \epsilon^2 t } \check{\bm F}_0(x)\,.
  \end{equation*}
  Since $\check K_\epsilon$ is a Gaussian we know the function~$\theta$ satisfies the diffusion equation
  \begin{equation}\label{e:heat}
    \partial_t \theta = \epsilon^2 \mathcal L \theta \quad\text{in } \R^d\,,
  \end{equation}
  with initial data~$\theta(0,x) = \check{\bm F}_\zerotuple$ (extended by $0$ outside the cube $[0,1]^d$).
  Since
  \begin{equation*}
    - \mathcal L \check{\bm F}_\zerotuple = \check \lambda_0 \check{\bm F}_\zerotuple \quad\text{in } (0, 1)^d\,,
  \end{equation*}
  and the outward normal derivative of $\check{\bm F}_\zerotuple$ is nonpositive on the boundary of the cube $[0, 1]^d$, the function~$\underline{\theta}$ is a sub-solution to the diffusion equation~\eqref{e:heat}.
  As a result we must have $\underline\theta_t \leq \theta_t$ for all $t \geq 0$ and $x \in [0,1]^d$.
  Setting~$t = 1$ yields
  \begin{equation*}%
    \check K_\epsilon * \check{\bm F}_\zerotuple \geq
      e^{-\check \lambda_0 \epsilon^2 } \check{\bm F}_\zerotuple\,.
  \end{equation*}
  Since $e^{-t} \geq 1 - t$, we also obtain~\eqref{e:KepsFzt} with $\gamma = 2$ and $a = \check \lambda_0$, concluding the proof.
\end{proof}
\begin{remark*}
  When~$\check K$ is the standard Gaussian, then
  \begin{equation*}
    \check{\bm F}_\zerotuple(\check x)
      = \prod_{k=1}^d \sin( \pi \check x_k ) \,,
    \quad\text{and}\quad
    \lambda_0 = \frac{\pi^2 d}{2}\,.
  \end{equation*}
\end{remark*}

It remains to prove Lemma~\ref{l:F0Exist} under Assumption~\ref{a:Separated}.
This is more involved, and we present the proof in Section~\ref{s:F0Exist}.

\subsection{Constructing \texorpdfstring{$\check{\bm F}_\zerotuple$}{F0} under Assumption~\ref{a:Separated} (Lemma \ref{l:F0Exist}).}\label{s:F0Exist}

In the non-Gaussian case (Assumption~\ref{a:Separated}), we will start with~$d = 1$ and choose~$\check{\bm F}_\zerotuple$ to be a tent function.
This will eventually yield~\eqref{e:KepsFzt} with~$\gamma = 1$.
We begin with the calculation in one dimension.

\begin{lemma}\label{l:tent1d}
  Let $\check f \colon \R \to \R$ be the tent like function defined by
  \begin{equation*}
    \check f(\check x) =
      \begin{dcases}
	\check x & 0 \leq \check x < \frac{1}{2}\\
	1 - \check x & \frac{1}{2} \leq \check x \leq 1\,,\\
	0 & \text{otherwise}\,.
      \end{dcases}
  \end{equation*}
  If $\check K:\R \to \R$ is an even function, then $\check{\bm F}_\zerotuple = \check f / \norm{f}_{L^1}$ satisfies~\eqref{e:KepsFzt} with~$\gamma = 1$, and
  \begin{equation*}
    a = 4 \int_{-\infty}^\infty \abs{\check y} \check K(\check y) \, d\check y\,.
  \end{equation*}
\end{lemma}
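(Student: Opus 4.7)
The plan is to use a distributional Taylor identity to reduce the inequality to a one-variable convexity/tail estimate. Since $\norm{\check f}_{L^1} = \tfrac{1}{4}$ we have $\check{\bm F}_\zerotuple = 4\check f$, which is piecewise linear with distributional second derivative $\check{\bm F}_\zerotuple'' = 4\delta_0 - 8\delta_{1/2} + 4\delta_1$. Direct computation (or Taylor's formula with integral remainder) then gives
\begin{equation*}
  \check{\bm F}_\zerotuple(\check x + \check y) + \check{\bm F}_\zerotuple(\check x - \check y) - 2\check{\bm F}_\zerotuple(\check x)
    = 4\bigl(|\check y| - \check x\bigr)_+ - 8\bigl(|\check y| - |\tfrac{1}{2} - \check x|\bigr)_+ + 4\bigl(|\check y| - (1 - \check x)\bigr)_+
\end{equation*}
for all $\check x \in [0,1]$ and $\check y \in \R$. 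Integrating against $\check K_\epsilon(\check y)\,d\check y$ and using that $\check K_\epsilon$ is even (so the left side integrates to $2\bigl(\check K_\epsilon * \check{\bm F}_\zerotuple(\check x) - \check{\bm F}_\zerotuple(\check x)\bigr)$), I obtain
\begin{equation*}
  \check K_\epsilon * \check{\bm F}_\zerotuple(\check x) - \check{\bm F}_\zerotuple(\check x)
    = 2\tilde I(\check x) - 4\tilde I\bigl(|\tfrac{1}{2} - \check x|\bigr) + 2\tilde I(1 - \check x),
\end{equation*}
where $\tilde I(c) \defeq \int_\R \check K_\epsilon(\check y)\bigl(|\check y| - c\bigr)_+\,d\check y$. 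This function is nonnegative, decreasing, and convex on $[0,\infty)$ (since $\tilde I''(c) = 2\check K_\epsilon(c) \geq 0$), with $\tilde I(0) = \int |\check y|\,\check K_\epsilon(\check y)\,d\check y = \epsilon M$, where $M \defeq \int |\check y|\,\check K(\check y)\,d\check y$.

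For $\check x \notin [0,1]$ the required inequality is trivial since both sides vanish, and by the symmetry $\check x \leftrightarrow 1 - \check x$ it suffices to treat $\check x \in [0, 1/2]$. There $\check{\bm F}_\zerotuple(\check x) = 4\check x$, and the target inequality with $a = 4M$ is equivalent to
\begin{equation*}
  \tilde I\bigl(\tfrac{1}{2} - \check x\bigr)
    \leq \tfrac{1}{2}\bigl[\tilde I(\check x) + \tilde I(1 - \check x)\bigr] + 4M\epsilon\, \check x.
\end{equation*}
For $\check x \in [1/4, 1/2]$ this is immediate: $\tilde I(\tfrac{1}{2} - \check x) \leq \tilde I(0) = \epsilon M \leq 4M\epsilon\,\check x$, and the convex-combination term contributes only nonnegatively. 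For $\check x \in [0, 1/4]$ I combine two estimates. Convexity of $\tilde I$ yields $\tilde I(\tfrac{1}{2}) \leq \tfrac{1}{2}[\tilde I(\check x) + \tilde I(1-\check x)]$. Using $\tilde I'(c) = -2\mathbb{P}(\check Y > c)$ together with Markov's inequality $\mathbb{P}(\check Y > c) \leq \mathbb{E}\check Y_+/c = \epsilon M/(2c)$ and monotonicity,
\begin{equation*}
  \tilde I\bigl(\tfrac{1}{2} - \check x\bigr) - \tilde I\bigl(\tfrac{1}{2}\bigr)
    = \int_{1/2 - \check x}^{1/2} 2\mathbb{P}(\check Y > c)\,dc
    \leq 2\check x\, \mathbb{P}\bigl(\check Y > \tfrac{1}{2} - \check x\bigr)
    \leq \frac{\epsilon M \check x}{1/2 - \check x}
    \leq 4M\epsilon\,\check x,
\end{equation*}
where the final step uses $1/2 - \check x \geq 1/4$. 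Adding the two estimates concludes the case.

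The main obstacle is that the required bound is \emph{multiplicative} rather than additive: the naive $4$-Lipschitz estimate only gives $\check K_\epsilon * \check{\bm F}_\zerotuple \geq \check{\bm F}_\zerotuple - 4\epsilon M$, which is useless near $\partial\supp(\check{\bm F}_\zerotuple)$ where $\check{\bm F}_\zerotuple$ vanishes linearly. The distributional Taylor identity circumvents this by isolating the sole source of deficit---the concave kink at $\check x = 1/2$, responsible for the $-8\tilde I$ term---from the convex kinks at $0$ and $1$, which contribute the nonnegative $+2\tilde I$ terms. This separation is precisely what allows the convexity-plus-Markov argument to reproduce the sharp coefficient $a = 4M$.
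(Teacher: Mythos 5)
Your proof is correct and reaches the paper's constant $a = 4\int_\R\abs{\check y}\check K(\check y)\,d\check y$, but by a genuinely different route. The paper's argument is a direct pointwise estimate: by symmetry it reduces to $\check x\in(0,\tfrac12]$, and then splits into $\check x\le\tfrac14$ (where it bounds $\check x - \check f(\check x - \check y)$ by $\check y$ on the main interval and by $\check x$ off it, then invokes Markov with a fixed cutoff at $\abs{\check y}\ge\tfrac14$) versus $\tfrac14<\check x\le\tfrac12$ (where $\check f\ge\tfrac14$ lets the trivial $1$-Lipschitz bound $\abs{\check f(\check x)-\check K_\epsilon*\check f(\check x)}\le\epsilon M$ suffice). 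Your argument instead decomposes $\check{\bm F}_\zerotuple = 4(\check x)_+ - 8(\check x - \tfrac12)_+ + 4(\check x - 1)_+$, exploits the exact symmetric second-difference identity $g(\check x+\check y)+g(\check x-\check y)-2g(\check x)=(\abs{\check y}-\abs{\check x - z})_+$ for hinge functions $g=(\cdot-z)_+$, and hence writes the deficit $\check K_\epsilon*\check{\bm F}_\zerotuple-\check{\bm F}_\zerotuple$ exactly as $2\tilde I(\check x)-4\tilde I(\abs{\tfrac12-\check x})+2\tilde I(1-\check x)$, with $\tilde I$ convex and decreasing. Convexity absorbs the $\tilde I(\tfrac12)$ part of the middle term into the positive endpoint terms, leaving only the increment $\tilde I(\tfrac12-\check x)-\tilde I(\tfrac12)$, which Markov controls. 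This is more structural --- it isolates the concave kink at $\tfrac12$ as the sole source of deficit, making the origin of the constant transparent --- and it would generalize readily to piecewise-linear profiles with multiple kinks. The paper's proof is shorter on the page but less revealing of why the estimate holds. One small point: both proofs, and your Markov step $\mathbb{P}(\check Y > c)\le\mathbb{E}\check Y_+/c$ in particular, implicitly use that $\check K$ is a nonnegative probability density, not merely an even function as the lemma statement literally reads; this is clear from context (Assumption~\ref{a:Separated}) but worth flagging.
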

\begin{proof}
Because $\check K$ is even, and $f$ is symmetric about $\check x = 1/2$, it suffices to prove the bound for $\check x \in (0,1/2]$.  For $0 < \check x \leq 1/4$ we note
  \begin{align*}
    \MoveEqLeft
    \check f(\check x) - \check K_\epsilon*\check f ( \check x)
    = \int_{-\infty}^\infty \paren{ \check x - \check f(\check x-\check y) } \check K_\epsilon(\check y) \, d\check y
  \\
    &\leq \int_{\check x - \frac{1}{2}}^{\check x} \check y \check K_\epsilon(\check y) \, d\check y
      + \check x \int_{\check y \notin (\check x - \frac{1}{2}, \check x)} \check K_\epsilon(\check y) \, d\check y
    \\
    &\leq \int_{-\check x}^{\check x} \check y \check K_\epsilon(\check y) \, d\check y
      + \check x \int_{\abs{\check y} \geq \frac{1}{4} } \check K_\epsilon(\check y) \, d\check y
    \leq 4\epsilon f(\check x) \int_{-\infty}^\infty \abs{\check y} \check K(\check y) \, d\check y \,,
  \end{align*}
  since~$\check K$ is even.  For $1/4 < \check x \leq 1/2$ we note $f(\check x) \geq 1/4$ and so
  \begin{align*}
    \check f(\check x) - \check K_\epsilon*\check f ( \check x)
    &\leq \int_{-\infty}^\infty \abs{\check x - \check f(\check x-\check y)} \, \check K_\epsilon(\check y) \, d\check y
    \leq \int_{-\infty}^\infty \abs{\check y} \check K_\epsilon(\check y) \, d\check y
    \\
    &\leq 4\epsilon f(\check x) \int_{-\infty}^\infty \abs{\check y} \check K(\check y) \, d\check y\,.
  \end{align*}
Thus whenever $\check f > 0$ we have the estimate
  \begin{equation*}
    \check f - \check K_\epsilon*\check f \leq a \epsilon \check f\,,
  \end{equation*}
  which immediately implies~\eqref{e:KepsFzt} with~$\gamma = 1$.
\end{proof}

Next, in arbitrary dimension $d \geq 1$, we construct $\check{\bm F}_\zerotuple$ in the case that~$\check K$ is separated.
\begin{lemma}\label{l:sepK}
  Suppose~$\check K:\R^d \to \R$ is of the form
  \begin{equation}\label{e:Ksep2}
    \check K(\check x) = \prod_{i=1}^d \check k_i( \check x_i )\,,
  \end{equation}
  for some even functions $\check k_i:\R \to \R$ such that
  \begin{equation}\label{e:Ksep2Moments}
    \int_{\R} \check k_i(\check x_i) \, d\check x_i = 1\,,
    \quad
    \int_{\R} \abs{\check x_i} \check k_i(\check x_i) \, d\check x_i  = A \,,
    \quad
    \int_0^{\frac{1}{2}}
      \check x_i \check k_i(\check x_i) \, d\check x_i  = \underline{A} \,,
  \end{equation}
  for constants~$A < \infty$ and $\underline A > 0$.
  Let $\check f$ be the tent function from Lemma~\ref{l:tent1d}, and define
  \begin{equation}\label{e:F0Def}
    \check F(\check x) = \prod_{i=1}^d \check f(\check x_i)\,,
    \quad\text{and}\quad
    \check{\bm F}_\zerotuple = \frac{\check F}{\norm{\check F}_{L^1}}\,.
  \end{equation}
  There exists a constant~$a = a(d, A, \underline A)$ such that~\eqref{e:KepsFzt} holds with~$\gamma = 1$ and all sufficiently small~$\epsilon > 0$.
\end{lemma}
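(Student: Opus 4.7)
The plan is to exploit the tensor product structure and apply Lemma~\ref{l:tent1d} coordinate by coordinate. Because both the rescaled kernel and the candidate tent product factorize,
\begin{equation*}
  \check K_\epsilon(\check x) = \prod_{i=1}^d \check k_{i,\epsilon}(\check x_i),
  \qquad
  \check k_{i,\epsilon}(t) \defeq \frac{1}{\epsilon}\check k_i\paren[\Big]{\frac{t}{\epsilon}},
\end{equation*}
Fubini immediately factorizes the convolution as
\begin{equation*}
  (\check K_\epsilon * \check F)(\check x) = \prod_{i=1}^d \paren[\big]{\check k_{i,\epsilon} * \check f}(\check x_i).
\end{equation*}

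Next, since each $\check k_i$ is even, a probability density, and has first absolute moment equal to $A$ by~\eqref{e:Ksep2Moments}, applying Lemma~\ref{l:tent1d} to each factor with $\check K$ there replaced by $\check k_i$ yields
\begin{equation*}
  \paren[\big]{\check k_{i,\epsilon} * \check f}(t) \geq (1 - 4A\epsilon)\check f(t)
\end{equation*}
for every $t \in \R$; the inequality is trivial wherever $\check f(t) = 0$, as the left-hand side is always nonnegative. For $\check x$ in the open cube $(0,1)^d = \set{\check F > 0}$, every factor on the right is strictly positive, so multiplying the $d$ coordinate-wise bounds and using Bernoulli's inequality gives
\begin{equation*}
  (\check K_\epsilon * \check F)(\check x) \geq (1 - 4A\epsilon)^d \check F(\check x) \geq (1 - 4Ad\epsilon)\check F(\check x),
\end{equation*}
valid for all $\epsilon$ small enough that $4A\epsilon \leq 1$. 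Outside the support of $\check F$ the right-hand side vanishes and the left-hand side is nonnegative, so the inequality extends to all of $\R^d$. Dividing through by $\norm{\check F}_{L^1}$ gives~\eqref{e:KepsFzt} with $\gamma = 1$ and $a = 4Ad$.

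Because the structural reduction is direct and the one-dimensional estimate is already in hand, there is no real obstacle at this step; the whole argument is essentially bookkeeping. I note that the lower bound $\underline A$ from~\eqref{e:Ksep2Moments} plays no role in the proof of this particular lemma. I expect it to become relevant only later, when deducing Lemma~\ref{l:F0Exist} from Lemma~\ref{l:sepK} by passing to the $L^1$-limit of approximating kernels as in condition~\ref{i:sep} of Assumption~\ref{a:Separated}: there, a uniform lower bound on the first moments of the approximants is needed to prevent the individual tent estimates from degenerating and to keep the constant $a$ bounded uniformly in $n$.
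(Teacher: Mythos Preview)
Your argument is correct and is in fact cleaner than the paper's. The paper does \emph{not} exploit the full tensor factorization of the convolution; instead it proceeds by induction on $d$, writing $\check F(\check x) - \check K_\epsilon * \check F(\check x)$ as a telescoping sum in the first coordinate versus the remaining $d-1$ coordinates. To close the induction the paper needs a separate treatment of the boundary region where $\check F(\check x)$ is very small: it uses the hypothesis $\underline A > 0$ to obtain the uniform lower bound $\check K_\epsilon * \check F \geq (\underline A \epsilon / 2)^d$ on $(0,1)^d$, so that whenever $\check F(\check x) < (\underline A \epsilon/2)^d$ the inequality is trivial, and otherwise at least one factor $\check f(\check x_i)$ is bounded below by $\underline A \epsilon/2$, which is what makes the inductive splitting work.

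Your route bypasses all of this. Because $(\check K_\epsilon * \check F)(\check x) = \prod_i (\check k_{i,\epsilon} * \check f)(\check x_i)$ exactly, the one-dimensional Lemma~\ref{l:tent1d} applied coordinatewise and multiplied (all factors nonnegative once $4A\epsilon \leq 1$) gives the result with the explicit constant $a = 4Ad$. Your observation that $\underline A$ plays no role is therefore correct, and indeed stronger than you suggest: tracing through the paper, the lower moment bound in~\eqref{e:Ksep2Moments} and the corresponding condition $\int_0^{1/2} \check x_i \check k_{n,i} \geq 1/A$ in Assumption~\ref{a:Separated} are invoked \emph{only} in the paper's proof of this lemma, so your argument shows that hypothesis can be dropped altogether. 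Your speculation that $\underline A$ is needed to keep $a$ uniform in the approximation step of Lemma~\ref{l:F0Exist} is not quite right --- the constant $a = 4Ad$ already depends only on the \emph{upper} first-moment bound, which is uniform by~\eqref{e:Ksep3Moments}.
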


\begin{proof}
  Note first for any $\check x \in (0, 1)^d$,
  \begin{equation*}
    \check K_\epsilon * \check F(x) \geq \check K_\epsilon * \check F(0)
      \geq \prod_{i=1}^d \int_0^{1/2} \check x_i \check k_{i,\epsilon} (\check x_i)  \, d\check x_i
      \geq \paren[\Big]{\frac{\underline A \epsilon}{2}}^d\,,
  \end{equation*}
  for all sufficiently small~$\epsilon$.
  Thus whenever $\check F(\check x) < (\underline{A} \epsilon/2)^d$ we have
  \begin{equation}\label{e:CkF-KepF1}
    \check F(\check x) - \check K_\epsilon * \check F(\check x) \leq 0 \,.
  \end{equation}

  Now suppose $\check F(\check x) \geq (\underline A \epsilon/2)^d$.
  Then for at least one $i \in \set{1, \dots, d}$ we must have $\check f(\check x_i) \geq \underline A \epsilon/ 2$.
  For simplicity, and without loss of generality, we assume $i = 1$.
  We will now use the notation $\check x = (\check x_1, \check x')$ where $\check x' = (\check x_2, \dots, \check x_d)$, and $\check F'(\check x') = \prod_2^d \check f(\check x_i)$, etc.
  By induction on~$d$ we may also assume
  \begin{equation*}
    \check F' - \check K_\epsilon' * \check F'
      \leq a \epsilon \check F'\,,
  \end{equation*}
  for some dimensional constant~$a = a(A, \underline A)$.
  We will subsequently allow~$a$ to increase from line to line, as long as it does not depend on~$\epsilon$ or $\check x$. 

  Now, we compute
  \begin{align}
    \nonumber
    \MoveEqLeft
    \check F(\check x) - \check K_\epsilon * \check F(\check x)
      = \int_{\R^d} ( \check f(\check x_1) \check F'(\check x') - \check f(\check x_1 - \check y_1) \check F'( \check x' - \check y') ) \check K_\epsilon(\check y) \, d\check y
    \\
    \nonumber
      &= \int_{\R^d} 
	  \check F'(\check x') ( \check f( \check x_1)  - \check f(\check x_1 - \check y_1) ) 
	  \check K_\epsilon(\check y) \, d\check y
    \\
    \nonumber
	&\qquad+  \int_{\R^d}  \check f(\check x_1 - \check y_1) ( \check F'(\check x') - \check F'(\check x' - \check y') ) \check K_\epsilon(\check y) \, d\check y
    \\
    \label{e:CkF-KepF2}
      &\leq a \epsilon \check F'(\check x') (\check f(\check x_1) +  \check k_\epsilon * \check f(\check x_1) ) \,.
\end{align}
Above we used Lemma~\ref{l:tent1d} to bound the first term and the induction hypothesis to bound the second integral.

  Observe that
  \begin{equation*}
    \abs[\big]{ \check f(x_1) - \check k_\epsilon * \check f(\check x_1)  }
      \leq \epsilon \norm{\grad \check f}_{L^\infty} \int_\R \abs{\check x_1} \check k_1(\check x_1) \, d\check x_1\,.
  \end{equation*}
  Using this and the assumption $\check f(\check x_1) \geq \underline A \epsilon / 2$ in~\eqref{e:CkF-KepF2} yields
  \begin{equation*}
    \check F(\check x) - \check K_\epsilon * \check F(\check x)
      \leq a \epsilon \check F'(\check x') ( \check f(\check x_1) + \epsilon )
      \leq a \epsilon \check F'(\check x') \paren[\Big]{ \check f(\check x_1) + \frac{2 \check f(\check x_1)}{\underline A} }
      \leq a \epsilon \check F(\check x) \,.
  \end{equation*}
  Combining this with~\eqref{e:CkF-KepF1} concludes the proof.
\end{proof}

Given Lemma~\ref{l:sepK}, a standard approximation argument can be used to deduce Lemma~\ref{l:F0Exist}.
\begin{proof}[Proof of Lemma~\ref{l:F0Exist} under Assumption~\ref{a:Separated}]
  Let~$\check K_n$ be as in~\eqref{e:Ksep3}, and define
  \begin{equation*}
    \check K_{n,i}'(x) = \prod_{j=1}^d \check k_{n,i}( \check x_j )\,.
  \end{equation*}
  Let~$\check{\bm F}_\zerotuple$ be the function defined in~\eqref{e:F0Def}.
  By Lemma~\ref{l:sepK} we know there exists $a = a(A, 1/A, d)$, independent of~$n, i$ such that 
  \begin{equation*}
    \check K_{n, i}' * \check{\bm F}_\zerotuple \geq (1 - a \epsilon ) \check{\bm F}_\zerotuple\,.
  \end{equation*}
  Multiplying by~$b_{n, i}$ (which are nonnegative by assumption) and summing yields
  \begin{equation*}
    \check K_{n}' * \check{\bm F}_\zerotuple
      \geq  (1 - a \epsilon ) \check{\bm F}_\zerotuple \sum_{i=1}^n b_{n, i}
      =  (1 - a \epsilon ) \check{\bm F}_\zerotuple \,.
  \end{equation*}
  Since~$\check K_n \to \check K$ in~$L^1$ taking the limit as~$n \to \infty$ yields~\eqref{e:KepsFzt} as desired.
\end{proof}

\section{Lower Bounds on the Mixing Time.}\label{s:MixTimeLower}
\subsection{Proof of the lower bound (Theorem \ref{t:TMixLower}).}
The main idea behind the proof of Theorem~\ref{t:TMixLower} is to choose~$X_0$ to be uniformly distributed on a cylinder set, and track the distribution of~$X_n$ carefully.
The action of~$\varphi$ pushes the distribution to be uniform on a cylinder set that is one order lower.
If the noise does not change this too much, then the mixing time can be bounded below by the order of the cylinder set.

To make this idea rigorous, we need to estimate $\norm{T_*^n f - U_*^n f}_{L^1}$ for functions which are piecewise constant on cylinder sets (recall~$T_*$ and~$U_*$ are defined in~\eqref{e:TStarDef} and~\eqref{e:DefUTStar} respectively).
For notational convenience given~$s \in \mathcal T$ we define the normalized indicator function $\bm I_s$ by
\begin{equation*}%
  \bm I_s \defeq \frac{1}{\pi(\cyl_s)} \one_{\cyl_s}\,.
\end{equation*}
The action of~$\varphi$ on such functions is explicit.
Indeed, for any $s = (s_0, s_1, \dots, s_n) \in \mathcal S$ we see
\begin{equation*}%
  U_* \one_{\cyl_s}
    = \sum_{i=1}^M \one_{\cyl_s} \circ \varphi_i^{-1} p_i
    = \one_{\cyl_{s}} \circ \varphi_{s_0}^{-1} p_{s_0}
    = p_{s_0} \one_{\cyl_{\sigma s}}\,,
\end{equation*}
and hence
\begin{equation}\label{e:evolutionIdentity}
  U_* \bm I_{s}
    = \frac{1}{\pi(\cyl_s)} U_* \one_{\cyl_s}
    = \frac{p_{s_0}}{\pi(\cyl_s)} \one_{\cyl_{\sigma s}}
    = \bm I_{\sigma s}\,.
\end{equation}
Thus if $n < \abs{s}$, then $U_*^n \bm I_s = \bm  I_{\sigma^n s}$, which is not mixed and can be used to give a lower bound on the mixing time.

In order to bound~$\norm{T_*^n f - U_*^n f}_{L^1}$ we need to control the amount of mass that leaks out of cylinder sets due to the action of the noise.
We will shortly see that this is bounded by the \emph{perimeter to volume ratio}, which we denote by~$H$.
Explicitly, if~$\mathcal S \subseteq \mathcal T$, we define
\begin{equation}\label{e:HDef}
  H(\mathcal S) \defeq
    \max_{s \in \mathcal S \setminus \{\zerotuple\}} \frac{\Per(\cyl_s)}{\pi(\cyl_s)}\,,
  \quad\text{and by convention}\quad H(\zerotuple) = 0 \,.
\end{equation}

In our case, all the cylinder sets are cubes by assumption, in which case~$H(\mathcal S)$ can be expressed in terms of~$\mathcal \lambda_s$ (which we recall is defined in~\eqref{elldef}).
Indeed, if~$\mathcal S \neq \set{\zerotuple}$, then
\begin{equation}\label{e:HLowerBound}
  H(\mathcal S)
    = \max_{s \in \mathcal S \setminus \set{\zerotuple}} \frac{2d}{\ell_s}
    = \max_{s \in \mathcal S \setminus \set{\zerotuple}} 2d \lambda_s
    \,.
\end{equation}

We now present a lemma controlling the error~$\norm{T_*^n f - U_*^n f}_{L^p}$ for piecewise constant functions and any $p \in [1, \infty)$.
To prove Theorem~\ref{t:TMixLower} we only need~$p = 1$.
However, in order to prove Theorem~\ref{t:pl-dis-time} we will need~$p = 2$.

\begin{lemma}\label{l:pcMix}
  Suppose that either $p = 1$ and~$K_\epsilon$ satisfies~\eqref{e:FirstMomentKep},  or~$p > 1$ and~$K_\epsilon$ is supported in the ball $B(0, \epsilon)$.
  Let~$\mathcal S \subseteq \mathcal T$ be a finite set so that~$\set{\cyl_s \st s \in \mathcal S}$ partitions the torus~$\T^d$, and
  \begin{equation}\label{e:Leps}
    p_{\min}^{1/d} L \epsilon \leq \ell_s \leq L \epsilon \,,
  \end{equation}
  for some constant~$L\geq 1$.
  Suppose~$f \in L^p$ is of the form
  \begin{equation}\label{e:f01}
    f_0 = \sum_{s \in \mathcal S} c_0(s) \one_{\cyl_s} \,.
  \end{equation}
  Then for all~$N \in \N$ we have
  \begin{equation}\label{e:fullCylLpEstimate}
    \norm{ T_*^N f_0 - U_\ast^N f_0 }_{L^p}
      \leq \epsilon^{1/p} C_1^{1/p'} 
	\sum_{n = 1}^{N} H( \sigma^n \mathcal S )^{1/p} \norm{f_0}_{L^p}\,,
  \end{equation}
  where~$p' = p / (p-1)$ is the H\"older conjugate of~$p$, and
  \begin{equation}\label{e:C1Def}
    C_1 \defeq 2d \paren[\Big]{ 2 + \frac{1}{p_{\min}^{1/d}} }^{d -1}\,.
  \end{equation}
\end{lemma}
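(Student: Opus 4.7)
The strategy is to telescope the difference $T_*^N - U_*^N$ and reduce matters to a single-step perturbation estimate. Using the factorization $T_* = K_\epsilon * U_*$ from~\eqref{e:DefUTStar}, one writes
\begin{equation*}
  T_*^N f_0 - U_*^N f_0 = \sum_{n=1}^{N} T_*^{N-n}\paren[\big]{K_\epsilon * g_n - g_n},
  \qquad g_n \defeq U_*^n f_0.
\end{equation*}
Both $U_*$ (a convex combination of Jacobian-rescaled compositions with weights $p_i$) and convolution against the probability density $K_\epsilon$ (by Young) are contractions on $L^p(\pi)$, so $T_*$ is itself an $L^p$ contraction and the $T_*^{N-n}$ factors may be discarded in the triangle inequality. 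By the identity~\eqref{e:evolutionIdentity}, each $g_n$ is piecewise constant on the partition $\set{\cyl_{s'} : s' \in \sigma^n \mathcal S}$ and $\norm{g_n}_{L^p} \leq \norm{f_0}_{L^p}$. Hence it suffices to prove the single-step estimate
\begin{equation*}
  \norm{K_\epsilon * g - g}_{L^p} \leq \epsilon^{1/p} C_1^{1/p'} H(\mathcal S')^{1/p} \norm{g}_{L^p}
\end{equation*}
for any $g = \sum_{s' \in \mathcal S'} a_{s'} \one_{\cyl_{s'}}$ with $\mathcal S'$ a cylinder partition satisfying~\eqref{e:Leps}.

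For the single-step bound, I would start from Jensen's inequality applied to the convolution against the probability density $K_\epsilon$:
\begin{equation*}
  \norm{K_\epsilon * g - g}_{L^p}^p \leq \int_{\T^d} K_\epsilon(y)\, \norm{g(\cdot - y) - g}_{L^p}^p\, \pi(dy).
\end{equation*}
For fixed $y$, the integrand $\abs{g(\cdot - y) - g(\cdot)}$ is supported on the union over $s'$ of the boundary layers $\set{x \in \cyl_{s'} : x - y \notin \cyl_{s'}}$, each of which has measure at most $d \abs{y}\, \ell_{s'}^{d-1}$ by elementary geometry of axis-aligned cubes. Expanding $\abs{a_{s'} - a_{s''}}^p \leq 2^{p-1}(\abs{a_{s'}}^p + \abs{a_{s''}}^p)$ for $x \in \cyl_{s'}$, $x - y \in \cyl_{s''}$, then summing over both contributions and using the bound $\ell_{s'}^{d-1} = \pi(\cyl_{s'})/\ell_{s'} \leq H(\mathcal S')\pi(\cyl_{s'})/(2d)$ from~\eqref{e:HLowerBound}, yields an estimate of the form
\begin{equation*}
  \norm{g(\cdot - y) - g}_{L^p}^p \leq C_1^{p-1}\, \abs{y}\, H(\mathcal S')\, \norm{g}_{L^p}^p.
\end{equation*}

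Finally, the factor $\int K_\epsilon(y) \abs{y}\, \pi(dy) \leq \epsilon$ that converts $\abs{y}$ into $\epsilon$ is obtained in two different ways, matching the two clauses of the hypothesis. For $p = 1$, this inequality is exactly~\eqref{e:FirstMomentKep}, and since $C_1^{1/p'} = C_1^{0} = 1$ no compact support is needed. For $p > 1$, the support hypothesis $\supp(K_\epsilon) \subseteq B(0,\epsilon)$ directly gives $\abs{y} \leq \epsilon$ on the support of $K_\epsilon$, and integrating in $y$ produces the claimed $\epsilon^{1/p}$. The main obstacle I anticipate is the combinatorial book-keeping behind $C_1 = 2d(2 + p_{\min}^{-1/d})^{d-1}$: one must count, without double-counting, how many neighboring cubes of minimal allowed side $p_{\min}^{1/d} L \epsilon$ can meet a given face of length $\leq L \epsilon$, and sum the contributions from both the $\abs{a_{s'}}^p$ and $\abs{a_{s''}}^p$ sides of the jump. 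This counting uses Assumption~\ref{a:Cubes} in an essential way (all cylinders are axis-aligned cubes) together with the comparability of cell sizes provided by~\eqref{e:Leps}.
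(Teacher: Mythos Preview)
Your telescoping $T_*^N-U_*^N=\sum_n T_*^{N-n}(K_\epsilon*g_n-g_n)$ with $g_n=U_*^n f_0$ coincides with the paper's decomposition (once one unwinds its inductive definition of $\tilde f_n$), and the reduction to a single-step bound is the same. The single-step bound is where you diverge. The paper does not estimate $\|K_\epsilon*g_n-g_n\|_{L^p}$ globally; instead it splits $\mathcal S$ by the first coordinate $t\in\mathcal I$ into sub-families $\mathcal S^t$, so that $\{\cyl_{\sigma s}:s\in\mathcal S^t\}$ is a genuine partition of $\T^d$, writes the error as $\sum_s c_0(s)\,(\bm I_{\sigma s}*K_\epsilon-\bm I_{\sigma s})\,\pi(\cyl_s)$, invokes~\eqref{e:Leps} together with $\supp K_\epsilon\subseteq B(0,\epsilon)$ to show that at most $C_1$ of these supports overlap at any point, and then applies the single-indicator estimate of Lemma~\ref{l:convLp}. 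Your Jensen-then-translation route applied directly to the function $g_n$ is more elementary and in fact \emph{avoids} the overlap counting: the bound $\|g(\cdot-y)-g\|_{L^p}^p\le 2^{p}\sum_Q|b_Q|^p\,\leb(Q\setminus(Q+y))$ needs only that the atoms $Q$ are cubes, and produces the constant $2^{1/p'}\le C_1^{1/p'}$. So the combinatorics you flag as the main obstacle is not actually needed along your route, and neither is the hypothesis~\eqref{e:Leps}.

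One point does need care: the collection $\{\cyl_{s'}:s'\in\sigma^n\mathcal S\}$ is in general \emph{not} a partition---when tuples in $\mathcal S$ have different lengths the shifted cylinders can nest---so your sentence ``$g_n$ is piecewise constant on the partition $\{\cyl_{s'}:s'\in\sigma^n\mathcal S\}$'' is not literally correct. What is true is that $g_n$ is piecewise constant on the common refinement of the per-branch partitions $\{\cyl_{\sigma^n s}:s\in\mathcal S,\ \omega\text{ is a prefix of }s\}$ over $\omega\in\mathcal I^n$; because cylinder sets form a tree, every atom of this refinement is itself some $\cyl_{\sigma^n s}$, hence an axis-aligned cube, and the refinement has $H\le H(\sigma^n\mathcal S)$. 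With this fix your argument goes through. This partition issue is exactly why the paper introduces the $\mathcal S^t$ decomposition.
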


In our case the sum on the right of~\eqref{e:fullCylLpEstimate} can be bounded explicitly.
Indeed, if $s = (s_0, \dots, s_n) \in \mathcal T$, we note
\begin{equation*}
  \ell_{\sigma s} = \pi(\cyl_{\sigma s})^{1/d}
    = \paren[\Big]{ \frac{\pi(\cyl_s)}{p_{s_0}} }^{1/d}
    \geq \frac{\ell_s}{p_{\max}^{1/d}}\,,
\end{equation*}
and hence $H(\sigma \mathcal S) \leq p_{\max}^{1/d} H(\mathcal S)$.
This immediately implies
\begin{equation}\label{e:HSummable}
  \sum_{n=1}^{N} H( \sigma^n (\mathcal S) )^{1/p}
    \leq \frac{H(\mathcal S)^{1/p}}{1 - p_{\max}^{1/(pd)}} \,.
\end{equation}
Thus, we obtain the following corollary.

\begin{corollary}\label{c:TStarUstar}
  Suppose either $p = 1$ and~$K_\epsilon$ satisfies~\eqref{e:FirstMomentKep},  or~$p > 1$ and~$K_\epsilon$ is supported in $B(0, \epsilon)$.
  Let~$\delta > 0$, choose
  \begin{equation}\label{e:LambdaPDelta}
    \Lambda_{p, \delta} \defeq \frac{2 d C_1^{p-1}}{\delta^p p_{\min}^{1/d} (1 - p_{\max}^{1/pd})^p}\,,
  \end{equation}
  and define $\mathcal S = \mathcal S_{\epsilon, \delta} \subseteq \mathcal T$ by
  \begin{equation}\label{e:SepDelta}
    \mathcal S_{\epsilon, \delta} = \set{
      s \in \mathcal T \st \ell_s \leq \epsilon \Lambda_{p, \delta}\,,
	\text{ and } \ell_{\sigma s} >  \epsilon \Lambda_{p, \delta} }\,.
  \end{equation}
  If~$f_0$ is defined by~\eqref{e:f01}, then for every~$N \in \N$ we have
  \begin{equation}\label{e:fullCylLpEstimate2}
    \norm{ T_*^N f_0 - U_\ast^N f_0 }_{L^p}
      \leq \delta \norm{f_0}_{L^p} \,.
  \end{equation}
\end{corollary}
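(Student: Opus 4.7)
The plan is to apply Lemma~\ref{l:pcMix} directly with $L = \Lambda_{p,\delta}$, control the resulting sum via the geometric bound~\eqref{e:HSummable}, and then use the cube identity~\eqref{e:HLowerBound} together with the size constraint in~\eqref{e:SepDelta}. The value of $\Lambda_{p,\delta}$ has been engineered so that the final constant is exactly $\delta$.

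First I would verify that $\mathcal S_{\epsilon,\delta}$ is an admissible input for Lemma~\ref{l:pcMix}. For any $x \in \T^d$, the sequence of cylinder sets containing $x$ is nested and has side lengths decreasing to $0$, so there is a first $n$ at which $\ell_{(s_0,\dots,s_n)} \leq \epsilon \Lambda_{p,\delta}$, producing a tuple in $\mathcal S_{\epsilon,\delta}$. Since two distinct cylinder sets in $\mathcal S_{\epsilon,\delta}$ cannot be properly nested, the family $\set{\cyl_s \st s \in \mathcal S_{\epsilon,\delta}}$ partitions $\T^d$. The upper bound $\ell_s \leq \epsilon \Lambda_{p,\delta}$ in~\eqref{e:Leps} is the definition of $\mathcal S_{\epsilon,\delta}$, and the lower bound follows from $\ell_s = p_{s_0}^{1/d} \ell_{\sigma s} > p_{\min}^{1/d} \epsilon \Lambda_{p,\delta}$.

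Next I would apply Lemma~\ref{l:pcMix} with $L = \Lambda_{p,\delta}$. Since all cylinder sets are cubes, \eqref{e:HLowerBound} combined with the lower bound on $\ell_s$ gives $H(\mathcal S_{\epsilon,\delta}) \leq 2d/(p_{\min}^{1/d} \epsilon \Lambda_{p,\delta})$. Summing the geometric series via~\eqref{e:HSummable} and using $p/p' = p - 1$, the right-hand side of~\eqref{e:fullCylLpEstimate} simplifies to
\[
  \paren[\Big]{ \frac{2 d \, C_1^{p-1}}{p_{\min}^{1/d} \Lambda_{p,\delta} (1 - p_{\max}^{1/(pd)})^p} }^{1/p} \norm{f_0}_{L^p},
\]
in which the factor $\epsilon^{1/p}$ from Lemma~\ref{l:pcMix} has cancelled against the $\epsilon^{-1/p}$ coming from $H(\mathcal S_{\epsilon,\delta})^{1/p}$. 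Substituting the definition of $\Lambda_{p,\delta}$ from~\eqref{e:LambdaPDelta}, this expression equals exactly $\delta \norm{f_0}_{L^p}$, which is~\eqref{e:fullCylLpEstimate2}.

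The argument is essentially a direct calculation, so there is no genuine obstacle; the only delicate point is tracking the exponents $1/p$, $1/p'$, and $p-1$ so that the constants match in both the $p=1$ and $p>1$ regimes (in the former, $C_1^{1/p'}$ should be read as $C_1^{0} = 1$, consistent with $C_1^{p-1} = 1$ in the formula for $\Lambda_{p,\delta}$).
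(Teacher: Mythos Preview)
Your argument is correct and follows the paper's proof essentially verbatim: verify that $\mathcal S_{\epsilon,\delta}$ partitions the torus with side lengths in $[p_{\min}^{1/d}\Lambda_{p,\delta}\epsilon,\Lambda_{p,\delta}\epsilon]$, feed this into Lemma~\ref{l:pcMix}, and combine~\eqref{e:HLowerBound}, \eqref{e:HSummable}, and~\eqref{e:LambdaPDelta} to reduce the constant to $\delta$. The only detail the paper records that you omit is the hypothesis $L\geq 1$ of Lemma~\ref{l:pcMix}: one checks $\Lambda_{p,\delta}\geq 1$ whenever $\delta<2$, and for $\delta\geq 2$ the bound~\eqref{e:fullCylLpEstimate2} is immediate from the triangle inequality since $T_*$ and $U_*$ are $L^p$ contractions.
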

\begin{proof}
  Note~$\set{\cyl_s \st s \in \mathcal S_{\epsilon, \delta}}$  partitions the torus, and for every~$s \in \mathcal S_{\epsilon, \delta}$ we have
  \begin{equation}\label{e:LsBounds}
    p_{\min}^{1/d} \Lambda_{p, \delta} \epsilon
      \leq \ell_s
      \leq \Lambda_{p, \delta} \epsilon\,.
  \end{equation}
  If $\delta < 2$, then $\Lambda_{p, \delta} \geq 1$, and we may apply Lemma~\ref{l:pcMix}.
  Using~\eqref{e:HLowerBound}, \eqref{e:HSummable}, and~\eqref{e:LambdaPDelta} in~\eqref{e:fullCylLpEstimate} immediately implies~\eqref{e:fullCylLpEstimate2}.
  If~$\delta \geq 2$ then~\eqref{e:fullCylLpEstimate2} follows directly from the triangle inequality and the fact that~$T_*$ and~$U_*$ are contractions.
\end{proof}

Momentarily postponing the proof of Lemma~\ref{l:pcMix}, we prove Theorem~\ref{t:TMixLower}.

\begin{proof}[Proof of Theorem~\ref{t:TMixLower}]
  For any~$\delta \in (0, 1)$, define
  \begin{equation}
    \delta' = 1 + \delta\,,
    \quad
    \delta'' = 1 - \delta\,,
  \end{equation}
  and let $\mathcal S = \mathcal S_{\epsilon,\delta''}$ be defined by~\eqref{e:SepDelta} (with~$\delta = \delta''$).
  Let~$i \in \mathcal I$ be such that $p_i = p_{\max}$, and choose
  \begin{equation*}
    N = \ceil[\Big]{ \frac{ d \ln \epsilon \Lambda_{1, \delta''} }{\ln p_{\max}} }\,,
    \quad
    N_1 = \ceil[\Big]{ \frac{ \ln (1 - \delta') }{\ln p_{\max}} }\,,
    \quad
    s = \underbrace{(i, i, \dots, i)}_{N \text{ times}} \,,
    \quad
    t = \underbrace{(i, i, \dots, i)}_{N_1 \text{ times}}  \,.
  \end{equation*}
  Note~$s \in \mathcal S_{\epsilon,\delta}$, and so by Corollary~\ref{c:TStarUstar} we note
  \begin{equation*}
    \norm{T_*^{N - N_1} \bm I_s - \bm I_t}_{L^1}
      \leq \norm{T_*^{N - N_1} \bm I_s - U_*^{N - N_1} \bm I_s}_{L^1}
      \leq \delta'' \,.
  \end{equation*}
  Also, by choice of~$N_1$ and~$t$,
  \begin{equation*}
    \norm{\bm I_t - 1}_{L^1}
      \geq 1 - p_{\max}^{\abs{t}}
      \geq \delta'\,.
  \end{equation*}
  Thus, by the triangle inequality
  \begin{equation*}
    \norm{T_*^{N - N_1} \bm I_s  - 1}_{L^1}
      \geq \norm{\bm I_t  - 1}_{L^1}
	- \norm{T_*^{N - N_1} \bm I_s  - \bm I_t}_{L^1}
      \geq \delta' - \delta'' = 2\delta\,.
  \end{equation*}

  Consequently, if we choose $X_0$ to have density~$\bm I_s$, then
  \begin{equation}\label{e:DistXnMN1}
    \norm{\dist(X_{N - N_1}) - \pi}_\TV = \frac{1}{2} \norm{T_*^{N - N_1} \bm I_s - 1}_{L^1} \geq \delta
  \end{equation}
  We clarify that the factor~$1/2$ above arises from the commonly used normalization convention
  \begin{equation}\label{e:TVdef}
    \norm{\mu - \nu}_\TV \defeq \sup_{A \subseteq \T^d} \frac{\abs{\mu(A) - \nu(A)}}{2}\,.
  \end{equation}
  The lower bound~\eqref{e:DistXnMN1} immediately implies~$\tmix(\delta) \geq N - N_1$ and the choice of~$N, N_1$ implies~\eqref{e:TMixLower} as desired.
\end{proof}

\subsection{Mixing piecewise constant functions (Lemma \ref{l:pcMix})}\label{s:cylMixLpProof}

We begin by estimating the mass that leaks out of cylinder sets due to the action of the noise.

\begin{lemma}[Convolution estimates]\label{l:convLp}
  Suppose~$K_\epsilon$ satisfies~\eqref{e:FirstMomentKep}.
  For any $p \in [1, \infty)$,  $\epsilon > 0$, $s \in \mathcal T$ we have
  \begin{equation*}
    \norm{\bm I_s * K_\epsilon - \bm I_s}_{L^p} \leq
    \frac{(\epsilon \Per( \cyl_s ))^{1/p}}{\pi( \cyl_s )}.
  \end{equation*}
\end{lemma}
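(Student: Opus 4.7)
The plan is to reduce the $L^p$ bound to an $L^1$ bound using a pointwise estimate, then exploit the first moment hypothesis on~$K_\epsilon$ together with a standard translation estimate for indicator functions of sets of finite perimeter.

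First, I would pull out the normalization and write
\begin{equation*}
  \bm I_s * K_\epsilon - \bm I_s
    = \frac{1}{\pi(\cyl_s)} g\,,
  \qquad
  g \defeq \one_{\cyl_s} * K_\epsilon - \one_{\cyl_s}\,,
\end{equation*}
so it suffices to show $\norm{g}_{L^p} \leq (\epsilon \Per(\cyl_s))^{1/p}$.  The key observation is that since $K_\epsilon \geq 0$ and $\int K_\epsilon \, d\pi = 1$, both $\one_{\cyl_s}$ and $\one_{\cyl_s} * K_\epsilon$ take values in $[0,1]$, so $\abs{g} \leq 1$ pointwise.  For any $p \geq 1$ this yields $\abs{g}^p \leq \abs{g}$, and hence
\begin{equation*}
  \norm{g}_{L^p}^p \leq \norm{g}_{L^1}\,.
\end{equation*}
Thus everything reduces to showing $\norm{g}_{L^1} \leq \epsilon \Per(\cyl_s)$.

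For the $L^1$ bound I would use Fubini.  Writing $g(x) = \int_{\T^d} K_\epsilon(y)(\one_{\cyl_s}(x-y) - \one_{\cyl_s}(x)) \, \pi(dy)$ and applying the triangle inequality inside, followed by Fubini, gives
\begin{equation*}
  \norm{g}_{L^1}
    \leq \int_{\T^d} K_\epsilon(y) \paren[\Big]{ \int_{\T^d} \abs{\one_{\cyl_s}(x-y) - \one_{\cyl_s}(x)} \, \pi(dx) } \pi(dy)
    = \int_{\T^d} K_\epsilon(y) \, \pi\paren{\cyl_s \triangle (\cyl_s + y)} \, \pi(dy)\,.
\end{equation*}
Now I would invoke the standard translation estimate for a set of finite perimeter, namely
\begin{equation*}
  \pi\paren{ \cyl_s \triangle (\cyl_s + y) } \leq d(y,0) \Per(\cyl_s)\,.
\end{equation*}
Under Assumption~\ref{a:Cubes} each $\cyl_s$ is an axis-aligned cube (or interval when $d=1$), so this is elementary: the symmetric difference of a cube and its translate by~$y$ is contained in a union of $d$ slabs of thicknesses $\abs{y_i}$ along the $2(d-1)$ faces orthogonal to the $i$-th axis, yielding the bound above.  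Combining this with the first moment assumption~\eqref{e:FirstMomentKep} gives
\begin{equation*}
  \norm{g}_{L^1}
    \leq \Per(\cyl_s) \int_{\T^d} d(y,0) K_\epsilon(y) \, \pi(dy)
    \leq \epsilon \Per(\cyl_s)\,,
\end{equation*}
which, via $\norm{g}_{L^p} \leq \norm{g}_{L^1}^{1/p}$ and dividing by $\pi(\cyl_s)$, completes the proof.

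I do not anticipate any real obstacle here; the only non-routine ingredient is the translation estimate $\pi(A \triangle (A+y)) \leq d(y,0) \Per(A)$, and for axis-aligned cubes on the torus it follows by direct inspection of the symmetric difference.  The pointwise bound $\abs{g} \leq 1$ is what makes the $L^p$ case for $p > 1$ no harder than $p = 1$.
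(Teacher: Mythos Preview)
Your proof is correct and follows essentially the same approach as the paper's. The only cosmetic difference is in how the $p>1$ case is reduced: you use the pointwise bound $\abs{g}\le 1$ to get $\norm{g}_{L^p}^p\le\norm{g}_{L^1}$, whereas the paper applies Jensen's inequality to push the $p$-th power past $K_\epsilon$ and then uses that $\abs{\one_{\cyl_s}(x-y)-\one_{\cyl_s}(x)}^p=\abs{\one_{\cyl_s}(x-y)-\one_{\cyl_s}(x)}$; both exploit the same $\{0,1\}$-valuedness and arrive at the identical $L^1$ computation.
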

\begin{remark*}
  If $K_\epsilon$ is supported in $B(0, \epsilon)$, then certainly $K_\epsilon$ satisfies~\eqref{e:FirstMomentKep}.
\end{remark*}
\begin{proof}
  This is a standard result that is true in more generality (see for instance Lemma 3.24 in~\cite{AmbrosioFuscoEA00}).
  For convenience we present a simple proof here.
  Note that
  \begin{equation*}
    \int_{x \in \T^d} \abs{ \one_{\cyl_s}(x - y) - \one_{\cyl_s}(x) }^p \, dx
      \leq d(0, y) \Per(\cyl_s)\,,
  \end{equation*}
  where~$d(0, y)$ denotes the torus distance between~$y$ and~$0$.
  Thus, by Jensen's inequality and~\eqref{e:FirstMomentKep} we see
  \begin{align*}
    \norm{\one_{\cyl_s} * K_\epsilon - \one_{\cyl_s}}_{L^p}^p
      &\leq \int_{y \in B(0, \epsilon)} \int_{x \in \T^d} K_\epsilon(y) \abs{ \one_{\cyl_s}(x - y) - \one_{\cyl_s}(x) }^p \, dx \, dy
    \\
      &\leq \int_{y \in B(0, \epsilon)} d(0, y) K_\epsilon(y) \Per(\cyl_s)  \, dy
      = \epsilon \Per(\cyl_s)\,.
    \qedhere
  \end{align*}
\end{proof}

Using the convolution estimates above, we can now prove the desired \(L^p\) estimate.
\begin{proof}[Proof of Lemma \ref{l:pcMix}]
  Using~\eqref{e:evolutionIdentity}, we note that if~$f_0$ is given by~\eqref{e:f01}, we have
  \begin{equation*}
    U_* f_0
      = U_* \sum_{s \in \mathcal S} c_0(s) \bm I_s \pi(\cyl_s)
      = \sum_{s \in \mathcal S} c_0(s) \bm I_{\sigma s} \pi( \cyl_s )\,.
  \end{equation*}
  This implies
  \begin{equation*}
    f_1 \defeq T_* f_0
      = \sum_{s \in \mathcal I } c_0(s) \bm I_{\sigma s} \pi( \cyl_s )
	+ \tilde f_1
     = U_{\ast} f_0
	+ \tilde f_1\,,
  \end{equation*} 
  where
  \begin{equation*}
    \tilde f_1 \defeq
      T_* f_0 - U_* f_0
      = \sum_{s \in \mathcal I} c_0(s) (\bm I_{\sigma s}*K_\epsilon - \bm I_{\sigma s}) \pi(\mathcal C_s)\,.
  \end{equation*}

  For any $t \in \mathcal I$, we partition \(\mathcal S\) into the sets $\set{\mathcal S^t \st t \in \mathcal I}$, where
  \[
      \mathcal S^t \defeq \set{ s \in \mathcal S \st \cyl_s \subseteq \cyl_t } \,.
  \]
  Grouping the error terms associated to each \(\mathcal S^t\), we can write
  \begin{equation}\label{e:ftTildeDef}
    \tilde f_1 = \sum_{t \in \mathcal I} \tilde f_{1, t}
    \quad\text{where}\quad
    \tilde f_{1, t} \defeq \sum_{s \in \mathcal S^t} c_0(s)
      \paren[\big]{ \bm I_{\sigma s} * K_\epsilon - \bm I_{\sigma s}} \, \pi( \cyl_s )
      \,.
  \end{equation}

  We now claim 
  \begin{equation}\label{e:TiF1tp}
    \norm{\tilde f_{1, t}}^p_{L^p}
      \leq C_1^{p/p'} \sum_{s \in \mathcal S^t} \abs{c_0(s)}^p \norm{ \bm I_{\sigma s} * K_\epsilon - \bm I_{\sigma s} }_{L^p}^p \pi(\cyl_s)^p\,,
  \end{equation}
  where~$C_1$ is the constant in~\eqref{e:C1Def}.
  If~$p = 1$, then $C_1^{p / p'} = 1$, and so clearly~\eqref{e:TiF1tp} holds.

  If $p > 1$, we recall that~$\supp(K_\epsilon) \subseteq B(0, \epsilon)$ by assumption.
  We will use this to show the support of each term in~\eqref{e:ftTildeDef} intersects the support of at most~$C_1$ other terms.
  This will imply~\eqref{e:TiF1tp} as claimed.

  To count the number of terms in~\eqref{e:ftTildeDef} with intersecting support, note that $\set{\cyl_s \st s \in \mathcal S^t}$ partitions~$\mathcal S^t$.
  Hence the sets $\set{ \cyl_{\sigma s} \st s \in \mathcal S^t }$ partition the torus.
  We claim now that for any~$s \in \mathcal S^t$, the number of $s' \in \mathcal S^t$ such that $B(\cyl_{\sigma s}, \epsilon )\cap B(\cyl_{\sigma s'}, \epsilon) \neq \emptyset$ is at most the constant~$C_1$ in equation~\eqref{e:C1Def}.
  To see this, we note that~\eqref{e:Leps} and the fact that~$L \geq 1$  imply $\diam(\cyl_{\sigma s'}) \geq 2 \epsilon$ for every $s' \in S^t$.
  Moreover every cylinder set is an axis-aligned cube, and the sets $\set{ \cyl_{\sigma s'} \st s' \in \mathcal S^t}$  partition the torus.
  Consequently, if for some $s, s' \in \mathcal S^t$, the sets $B(\cyl_{\sigma s}, \epsilon)$ and $B(\cyl_{\sigma s'}, \epsilon)$ intersect, the closures of $\cyl_{\sigma s}$ and \(\cyl_{\sigma s'}\) themselves must intersect.

  Given~\eqref{e:Leps} and the fact that $\diam(\cyl_{s'}) \geq p_{\min}^{1/d} \diam(\cyl_{\sigma s'})$, the number of disjoint sets intersecting a given face of~\(\cyl_{\sigma s}\) is at most $(2 + p_{\min}^{-1/d})^{d-1}$.
  Since there are $2d$ faces, there are at most~$2d (2 + p_{\min}^{-1/d})^{d-1} = C_1$ cylinder sets~$\cyl_{\sigma s'}$ such that the neighborhoods $B(\cyl_{\sigma s}, \epsilon)$ and $B(\cyl_{\sigma s'}, \epsilon)$ intersect.

  Since \(\supp(K_\epsilon) \subseteq B(0, \epsilon)\) by assumption, \(\supp (\bm I_{\sigma s} * K_\epsilon) \subseteq B(\cyl_{\sigma s},\epsilon)\), and hence the support of each term in~\eqref{e:ftTildeDef} intersects the support of at most~$C_1$ other terms.
  This shows~\eqref{e:TiF1tp} holds even when~$p > 1$.
  \smallskip

  Using~\eqref{e:TiF1tp} with Lemma~\ref{l:convLp}, we have
  \begin{align*}
    \norm{\tilde f_{1, t}}^p_{L^p}
      &\leq C_1^{p/p'} \sum_{s \in \mathcal S^t} \abs{c_0(s)}^p \norm{ \bm I_{\sigma s} * K_\epsilon - \bm I_{\sigma s} }_{L^p}^p \pi(\cyl_s)^p
    \\
      &\leq \epsilon C_1^{p/p'} H(\sigma (\mathcal S^t) ) \sum_{s \in \mathcal S^t} \abs{c_0(s)}^p \pi(\cyl_{\sigma s} )^{-p/p'} \pi(\cyl_s)^p
    \\
      &\leq \epsilon C_1^{p/p'} H(\sigma(\mathcal S^t)) \pi(\cyl_t)^{p/p'}
	\sum_{s \in \mathcal S^t} \abs{c_0(s)}^p \pi(\cyl_s) \,,\\
    &\leq \epsilon C_1^{p/p'} H(\sigma(\mathcal S)) \pi(\cyl_t)^{p/p'}
	\sum_{s \in \mathcal S^t} \abs{c_0(s)}^p \pi(\cyl_s) \,.
  \end{align*}
  This implies
  \begin{align}
    \nonumber
    \sum_{t \in \mathcal I} \norm{\tilde f_{1, t}}_p
      &\leq \epsilon^{1/p} C_1^{1/p'}
       H( \sigma( \mathcal S ) )^{1/p}
	\sum_{t \in \mathcal I} \pi(\cyl_t)^{1/p'}
	\paren[\Big]{ \sum_{s \in \mathcal S^t} \abs{c_0(s)}^p \pi(\cyl_s) }^{1/p}
    \\
    \label{e:tmpf1}
      &\leq \epsilon^{1/p} C_1^{1/p'}
       H( \sigma( \mathcal S ) )^{1/p}
	\paren[\Big]{ \sum_{s \in \mathcal S } \abs{c_0(s)}^p \pi(\cyl_s) }^{1/p} \,.
  \end{align}
  Since
  \begin{equation*}
    \norm{f_0}_{L^p}^p
      = \sum_{s \in \mathcal S} \abs{c_0(s)}^p \norm{\one_{\cyl_s}}_{L^p}^p
      = \sum_{s \in \mathcal S} \abs{c_0(s)}^p \pi(\cyl_s)\,,
  \end{equation*}
  the inequality~\eqref{e:tmpf1} implies
  \begin{equation*}
    \norm{\tilde f_1}_{L^p} \leq 
    \epsilon^{1/p} C_1^{1/p'} H( \sigma \mathcal S )^{1/p} \norm{f_0}_{L^p}\,.
  \end{equation*}
  \smallskip

  Now for $n \geq 2$ we inductively define the error term \(\tilde f_n\) via the identity
  \begin{align*}
    f_{n}
      = T_* f_{n-1}
      &= U_*^n f_0 + \sum_{i = 1}^{n-1} T_*^i \tilde f_{n-i} + \tilde f_n
  \end{align*}
  and write
  \begin{equation*}
    \tilde f_{n} \defeq \sum_{t \in \mathcal I^{n}} \tilde f_{n, t}
    \quad\text{where}\quad 
    \tilde f_{n, t} = \sum_{s \in \mathcal S_t} c_0(s)
      \paren[\big]{ \bm I_{\sigma^n s} * K_\epsilon - \bm I_{\sigma^n s}} \, \pi( \cyl_s )
      \,.
  \end{equation*}
  Using the same argument as above, we see
  \begin{equation*}
    \norm{\tilde f_n}_{L^p} \leq 
      \epsilon^{1/p} C_1^{1/p'} H( \sigma^n \mathcal S )^{1/p} \norm{f_0}_{L^p}\,,
  \end{equation*}
  for all \(1 \le n < \max_{s \in \mathcal S} \abs{s}\).

  Since \(T_\ast\) is an \(L^p\) contraction, we have
  \begin{equation*}
    \norm{T_*^N f_0 - U_*^N f_0}_{L^p}
    \leq \sum_{i=1}^n \norm{\tilde f_n}_{L^p} 
    \leq \epsilon^{1/p} C_1^{1/p'}
	\sum_{n = 1}^{N} H( \sigma^n \mathcal S )^{1/p} \norm{f_0}_{L^p}\,.
  \end{equation*}
  for all~$N < \max_{s \in \mathcal S} \abs{s}$.
  This proves~\eqref{e:fullCylLpEstimate} for all $N < \max_{s \in \mathcal S} \abs{s}$.

  For $n \geq \max_{s \in \mathcal S}$ we note
  \begin{equation*}
    \bm I_{\sigma^n s} * K_\epsilon - \bm I_{\sigma^n s}
      = \bm I_{\zerotuple} * K_\epsilon - \bm I_{\zerotuple}
      = 0\,,
  \end{equation*}
  and hence~$\tilde f_n = 0$.
  Since~$H(\zerotuple) = 0$ by convention, this implies~\eqref{e:fullCylLpEstimate} for all~$n \in \N$.
  \end{proof}

\section{Relation between the dissipation time and mixing time.}\label{s:TMixTDis}

The upper bound for the dissipation time~\eqref{e:tmix-leq-tdis} is a general fact and requires nothing but the Markov property.
The proof is very similar to the proof of Proposition 2.2 in~\cite{IyerZhou22}.
Since the proof is short and elementary, we present it here for the readers convenience.
\begin{proof}[Proof of the upper bound~\eqref{e:tdis-leq-tmix} in Proposition~\ref{p:tmixTdis}]
  Let~$\rho_n(x, y)$ be the transition density of the process~$X$ after~$n$ time steps, and~$\theta_0 \in \dot L^2$.
  Then we note
  \begin{equation*}
    \theta_n(x)
      \defeq \E^x \theta_0(X_n)
      = \int_{\T^d} \rho_n(x, y) \theta_0(y) \, \pi(dy) \,.
  \end{equation*}
  Since $\theta_0$ has mean~$0$, this implies
  \begin{equation*}
    \theta_n(x)
    = \int_{\T^d} (\rho_n(x, y) - 1) \theta_0(y) \, \pi(dy)\,,
  \end{equation*}
  and hence
  \begin{align*}
    \norm{\theta_n}_{L^2}^2 &\leq
    \paren[\Big]{ \int_{\T^d \times \T^d}
    \abs{\rho_n(x, y) - 1} \, \pi(dy) \, \pi(dx) }
    \cdot
    \\
      &\qquad\cdot
    \paren[\Big]{ \int_{\T^d \times \T^d} \theta_0(y)^2 \paren{ \rho_n(x, y) + 1 } \, \pi(dx) \, \pi(dy) }
    \\
      &\leq 2 \norm{\theta_0}_{L^2}^2
      \sup_{x \in \T^d} \int_{\T^d} \abs{\rho_n(x, y) - 1} \, dy\,.
  \end{align*}
  The last inequality above followed from the fact that $\rho_n$ is nonnegative, and leaves the measure~$\pi$ invariant.
  Notice that the right hand side is at most \(4 \norm{\theta_0}_{L^2}^2 \norm{ \dist(X_n) - \pi }_\TV\), where we recall the total variation norm is normalized by a factor of~$\frac{1}{2}$ as in~\eqref{e:TVdef}.
  Thus, when~$n \geq \tmix(\delta^2 / 4)$, we have~$\norm{\theta_n}_{L^2}^2 \leq \delta^2 \norm{\theta_0}_{L^2}$, proving~\eqref{e:tdis-leq-tmix} as claimed.
\end{proof}

In order to prove the lower bound for the dissipation time~\eqref{e:tmix-leq-tdis}, it is convenient to introduce the ``backward'' operator~$T$, defined by
\begin{equation}\label{e:TDef}
  T f(x) = \E^x f(X_1) = \int_{\T^d} \rho(x, y) f(y) \, \pi(dy)\,,
\end{equation}
where we recall~$\rho$ is the transition kernel of the process~$X$.
From~\eqref{e:TStarDef} we see that for any~$f, g \in L^2$ we have
\begin{equation}\label{e:TTstarDual}
  \ip{Tf, g}
    = \int_{\T^d \times \T^d} \rho(x, y) f(y) g(x) \, \pi(dy) \, \pi(dx)
    = \ip{f, T_*g}\,,
\end{equation}
where~$\ip{\cdot, \cdot}$ denotes the standard~$L^2$ inner-product.

Moreover, if~$\rho_n$ is the $n$-step transition kernel of~$X$, then by the Markov property and induction we see
\begin{align}
  T^n f (x)
    &= T^{n-1} T f(x)
    = \int_{\T^d \times \T^d} \rho_{n-1}(x, y) \rho(y, z) f(z) \, \pi(dz) \, \pi(dy)
  \\
    &= \int_{\T^d} \rho_n(x, z) f(z) \, \pi(dz)
    = \E^x f(X_n)\,.
\end{align}
In light of this, we may rewrite the dissipation time~$\tdis(\delta)$ as
\begin{equation}
  \tdis(\delta) = \min \set[\big]{
    n \st \norm{ T^n f }_{L^2} \leq  \delta \norm{f}_{L^2} 
      \text{ for all } f \in \dot L^2
  }\,.
\end{equation}
We claim~$T$ can be replaced with~$T_*$ in the above definition.
That is, we claim
\begin{equation}\label{e:TDisTStar}
  \tdis(\delta) = \min \set[\big]{
    n \st \norm{ T_*^n f }_{L^2} \leq  \delta \norm{f}_{L^2} 
      \text{ for all } f \in \dot L^2
  } \,.
\end{equation}
To see this let $m = \tdis^*(\delta)$ be the right hand side of~\eqref{e:TDisTStar}.
Using~\eqref{e:TTstarDual} we see
\begin{equation}
  \norm{T^m f}_{L^2}^2 = \ip{T^m f, T^m f}
    = \ip{f, T_*^n T^m f}
    \leq \norm{f}_{L^2} \norm{T_*^m T^m f}_{L^2}
    \leq \delta \norm{f}_{L^2} \norm{T^m f}_{L^2} \,,
\end{equation}
and hence
\begin{equation}
  \norm{T^m f}_{L^2} \leq \delta \norm{f}_{L^2}\,.
\end{equation}
This implies~$\tdis(\delta) \leq m = \tdis^*(\delta)$.
The reverse inequality follows by symmetry, showing~\eqref{e:TDisTStar}.
\medskip

We will now prove the lower bound~\eqref{e:tmix-leq-tdis} by using the identity~\eqref{e:TDisTStar}
\begin{proof}[Proof of the lower bound~\eqref{e:tmix-leq-tdis} in Proposition~\ref{p:tmixTdis}]
  Since~$K_\epsilon \in L^1$, we note that
  \begin{equation*}
    \P( X_1 \in dx ) = f_1(x) \pi(dx)\,,
  \end{equation*}
  for some probability density function~$f_1 \in L^1$.
  By Young's inequality,
  \begin{equation*}
    \norm{T_* (f_1 - 1)}_{L^2} \leq \norm{K_\epsilon}_{L^2} \norm{U_* (f_1 - 1)}_{L^1}
      \leq \frac{2 \bm K }{\epsilon^{d/2} }\,.
  \end{equation*}
  Thus
  \begin{equation}\label{e:Xn2MinusPi}
    \norm{\dist(X_{n+2}) - \pi}_\TV
    = \frac{\norm{T_*^{n+1} f_1 - 1}_{L^1}}{2}
    \leq \frac{\norm{T_*^{n+1} (f_1 - 1)}_{L^2}}{2}\,.
  \end{equation}
  Notice for any~$g \in \dot L^2$ and~$n = m \tdis(\delta)$ we have
  \begin{equation}
    \norm{T_*^n(g)}_{L^2} \leq \delta^m \norm{g}_{L^2} \,.
  \end{equation}
  Thus choosing
  \begin{equation*}
    n \geq \log_\delta \paren[\Big]{ \frac{\delta' \epsilon^{d/2}}{\bm K} } \tdis(\delta)
  \end{equation*}
  the right hand side of~\eqref{e:Xn2MinusPi} is at most~$\delta'$.
  This implies~\eqref{e:tmix-leq-tdis}, concluding the proof.
\end{proof}
\section{Dissipation Time Bounds.}\label{s:dissipationTimeBounds}
\subsection{The upper bound on the dissipation time.}\label{s:DtimeUpperBd}
Throughout this section we will make the same assumptions as Theorem~\ref{t:pl-dis-time}.
That is, we will assume Assumption~\ref{a:Cubes} holds, and~$\supp(K_\epsilon) \subseteq B(0, \epsilon)$.

The main idea behind the proof is to use the noise to dissipate high frequencies, and the dynamics of~$\varphi$ to mix the low frequencies.
While dissipating high frequencies with the noise is straightforward, showing that the low frequency data gets mixed requires a little more care.
We state this as our first lemma.
\begin{lemma}\label{l:LowFreqDecay}
  Let $p \in [1, \infty)$, $\delta \in (0, 1)$.
  There exists an explicit constant~$B_{p,\delta}$ such that if~$f \in \dot L^p$ and
  \begin{equation}\label{e:GradFLpSmall}
    \norm{\grad f}_{L^p}
      \leq \frac{B_{p,\delta}}{\epsilon} \norm{f}_{L^p}\,,
  \end{equation}
  then
  \begin{equation*}
    \norm{T_*^n f}_{L^p} \leq \frac{\delta}{2} \norm{f}_{L^p}\,,
    \quad\text{for all }
    n \geq \frac{d \ln (\epsilon \Lambda_{p, \delta/4})}{\ln p_{\max} }
    \,.
  \end{equation*}
\end{lemma}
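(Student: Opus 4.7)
The approach is to approximate $f$ by a function that is piecewise constant on the partition $\set{\cyl_s \st s \in \mathcal S_{\epsilon,\delta/4}}$ from Corollary~\ref{c:TStarUstar}, then exploit the explicit action \eqref{e:evolutionIdentity} of $U_*$ on indicators of cylinder sets. The smallness of $\grad f$ controls the approximation error; Corollary~\ref{c:TStarUstar} controls the discrepancy between $T_*^n$ and $U_*^n$ on the piecewise constant part; and the mean-zero condition together with $U_*^{\abs{s}}\bm I_s = \bm I_\zerotuple \equiv 1$ annihilates the piecewise constant part once $n$ exceeds the maximum cylinder depth in $\mathcal S_{\epsilon,\delta/4}$.

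Concretely, set $\Lambda = \Lambda_{p,\delta/4}$ and $\mathcal S = \mathcal S_{\epsilon,\delta/4}$, and let
\begin{equation*}
  \tilde f \defeq \sum_{s \in \mathcal S} c_s \bm I_s\,,
  \qquad c_s \defeq \int_{\cyl_s} f \, d\pi\,,
\end{equation*}
so that $\tilde f$ is the conditional expectation of $f$ onto the $\sigma$-algebra generated by $\mathcal S$. Since each $\cyl_s$ is an axis-aligned cube of side $\ell_s \leq \epsilon \Lambda$ (by \eqref{e:LsBounds}), the Poincaré--Wirtinger inequality on cubes provides a constant $C_P = C_P(d,p)$ for which
\begin{equation*}
  \norm{f - \tilde f}_{L^p}
    \leq C_P \epsilon \Lambda_{p,\delta/4} \norm{\grad f}_{L^p}\,.
\end{equation*}
Choosing $B_{p,\delta} = \delta/(4 C_P \Lambda_{p,\delta/4})$ in \eqref{e:GradFLpSmall} forces $\norm{f - \tilde f}_{L^p} \leq (\delta/4)\norm{f}_{L^p}$, and Jensen's inequality gives $\norm{\tilde f}_{L^p} \leq \norm{f}_{L^p}$.

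Corollary~\ref{c:TStarUstar}, applied with $\delta$ replaced by $\delta/4$, yields $\norm{T_*^n \tilde f - U_*^n \tilde f}_{L^p} \leq (\delta/4)\norm{\tilde f}_{L^p}$ for every $n$. On the other hand, iterating \eqref{e:evolutionIdentity} gives $U_*^{\abs{s}}\bm I_s = \bm I_\zerotuple = 1$, and since $U_* 1 = 1$, we have $U_*^n \bm I_s \equiv 1$ for every $n \geq \abs{s}$. Because $f \in \dot L^p$ has mean zero, $\sum_{s \in \mathcal S} c_s = \int_{\T^d} f\,d\pi = 0$, and hence $U_*^n \tilde f \equiv 0$ for all $n \geq \max_{s \in \mathcal S}\abs{s}$. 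A short computation from $\ell_s \leq p_{\max}^{\abs{s}/d}$ and $\ell_s \geq p_{\min}^{1/d}\epsilon\Lambda$ bounds this maximum depth by $d\ln(\epsilon\Lambda_{p,\delta/4})/\ln p_{\max}$ (up to an $O(1)$ additive constant that can be absorbed into the threshold on $n$, or equivalently into a slight rescaling of $\Lambda_{p,\delta/4}$).

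Assembling the three estimates with the triangle inequality and the $L^p$ contraction property of $T_*$ yields
\begin{align*}
  \norm{T_*^n f}_{L^p}
    &\leq \norm{T_*^n(f - \tilde f)}_{L^p}
      + \norm{T_*^n \tilde f - U_*^n \tilde f}_{L^p}
      + \norm{U_*^n \tilde f}_{L^p} \\
    &\leq \tfrac{\delta}{4}\norm{f}_{L^p}
      + \tfrac{\delta}{4}\norm{\tilde f}_{L^p}
      + 0
    \leq \tfrac{\delta}{2}\norm{f}_{L^p}\,,
\end{align*}
as claimed. The main obstacle is simply the calibration of $B_{p,\delta}$ against $\Lambda_{p,\delta/4}$ through the Poincaré constant; the rest is bookkeeping, with the mildly annoying point being the $O(1)$ loss in the depth bound for cylinder sets in $\mathcal S_{\epsilon,\delta/4}$ that must be absorbed.
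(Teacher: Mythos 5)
Your proof is correct and follows essentially the same route as the paper: replace $f$ by its conditional expectation onto the partition $\set{\cyl_s \st s \in \mathcal S_{\epsilon,\delta/4}}$, control the difference by Poincar\'e on small cubes (which calibrates $B_{p,\delta}$ against $\Lambda_{p,\delta/4}$), invoke Corollary~\ref{c:TStarUstar} for the piecewise-constant part, note that the mean-zero condition kills $U_*^N$ of it once $N$ exceeds the maximum cylinder depth, and finish by the triangle inequality and the $L^p$-contractivity of $T_*$. The only cosmetic difference is that you explicitly flag the $O(1)$ slack in the depth bound, which the paper's inequality \eqref{e:tupleLenBounds} absorbs silently.
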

\begin{remark}
  Let~$X_0 \sim \mu$, $f_1 = K_\epsilon * U_* \mu$, and note $\norm{\grad f_1 - 1}_{L^1} \leq 2 \norm{\grad K_\epsilon}_{L^1}$.
  If
  \begin{equation}\label{e:KepSmall}
    2 \norm{\grad K_\epsilon}_{L^1} < B_{p,\delta}\,,
  \end{equation}
  it is possible to use Lemma~\ref{l:LowFreqDecay} to show that for some~$\delta \in (0, 1)$ we have the expected sharp upper bound stated in~\eqref{e:TMixDeltaSharpUpper}.
  However, the constant~$B_{p, \delta}$ is related to the Poincar\'e constant of cylinder sets and can be computed explicitly.
  Even the indicator function of a ball does not satisfy~\eqref{e:KepSmall}, and it may not be possible to find even one kernel (that satisfies~\eqref{e:FirstMomentKep}) for which~\eqref{e:KepSmall} holds.
\end{remark}

Here~$\Lambda_{p, \delta/4}$ is defined by~\eqref{e:LambdaPDelta} (in Corollary~\ref{c:TStarUstar}).
Postponing the proof of Lemma~\ref{l:LowFreqDecay}, we prove the upper bound in Theorem~\ref{t:pl-dis-time}.
While Lemma~\ref{l:LowFreqDecay} holds for any~$p \in [1, \infty)$, we are presently only able to apply it when~$p = 2$ as our proof of Theorem~\ref{t:pl-dis-time} relies on orthogonal projections in frequency space.

\begin{proof}[Proof of the upper bound in Theorem~\ref{t:pl-dis-time}]
  Define the Fourier projections~$\Plow$ and $\Phigh$, which project functions onto the low and high frequency spaces respectively.
  Explicitly, define
  \begin{equation*}
    (\Plow f)^\wedge(k)
      = \sum_{\abs{k} \leq \frac{B}{2\pi \epsilon}} \hat f(k)
    \qquad\text{and}\qquad
    \Phigh = I - \Plow\,,
  \end{equation*}
  where~$B = B_{2, \delta}$ is the constant appearing in~\eqref{e:GradFLpSmall}, and $\hat f(k)$ is the $k$-th Fourier coefficient of~$f$.

  The reason for defining~$\Plow$ and~$\Phigh$ as above is that functions in the range of~$\Plow$ have well controlled gradients, and can be mixed using Lemma~\ref{l:LowFreqDecay}.
  On the other hand, functions in the range of~$\Phigh$ only have high frequencies and are rapidly mixed by~$\Phigh$.
  More precisely, for any~$g \in \dot L^2$, we have
  \begin{equation*}
    \norm{\grad \Plow g}_{L^2} \leq \frac{B}{\epsilon} \norm{\Plow g}_{L^2}\,,
  \end{equation*}
  and hence Lemma~\ref{l:LowFreqDecay} can be applied to $\Plow g$.

  For high frequencies,
  \begin{equation*}
    \norm{K_\epsilon * \Phigh g}_{L^2}
      = \paren[\Big]{ \sum_{\abs{k} \geq \frac{B}{2\pi \epsilon}} \abs{ \hat K_\epsilon}^2 (\Phigh g)^\wedge(k)^2}^{1/2}
      \leq (1 - \chi) \norm{\Phigh g}_{L^2} \,,
  \end{equation*}
  where~$\chi = \chi(\delta)$ is defined by
  \begin{equation*}
    \chi = \sup_{\abs{k} \geq \frac{B}{2\pi \epsilon}} \abs{\hat K_\epsilon(k)}\,.
  \end{equation*}
  Recall that assumption~\eqref{e:KHatDecay} guarantees $\chi < 1$.
  \smallskip

  We will now show
  \begin{equation}\label{e:tDisStarDelta}
    \tdis(\delta) \leq N + N_1 + 1
  \end{equation} 
  where
  \begin{equation*}
    N = \ceil[\Big]{ \frac{d \ln (\epsilon \Lambda_{2, \delta/4})}{\ln p_{\max} } }\,,
    \quad\text{and}\quad
    N_1 = \ceil[\Big]{
      \frac {\ln \delta}
	{\ln\paren[\Big]{1 - \frac{\delta^2(1 - (1 - \chi)^2)}{4 (1 - \chi)^2} }}
      }\,,
  \end{equation*}
  provided
  \begin{equation}
    \delta^2 \leq \frac{2(1 - \chi)^2}{1 - (1 - \chi)^2}\,.
  \end{equation}
  We divide the proof into two cases.

  \restartcases
  \case
  Suppose for some $n \leq N_1$ we have
  \begin{equation}\label{e:PHighSmall}
    \norm{\Phigh U_* f_n}_{L^2} \leq \frac{\delta}{2(1 - \chi)} \norm{f_n}_{L^2}\,.
  \end{equation}
  Let~$\mathcal K_\epsilon g = K_\epsilon * g$ denote the convolution operator, and observe
  \begin{align*}
    \norm{T_*^{N+1}f_n}_{L^2} &\leq
      \norm{T_*^N \mathcal K_\epsilon \Plow U_* f_n}_{L^2}
      + \norm{T_*^N \mathcal K_\epsilon \Phigh U_* f_n}_{L^2}
    \\
      &\leq \frac{\delta}{2} \norm{\Plow U_* f_n}_{L^2}
	+ (1 - \chi) \norm{\Phigh U_*f_n}_{L^2} 
      \leq \delta \norm{f_n}\,.
  \end{align*}
  This implies
  \begin{equation}\label{e:fNN11}
    \norm{f_{N + N_1 + 1}}_{L^2} \leq \delta \norm{f_0}_{L^2} \,.
  \end{equation}

  \case
  Suppose now~\eqref{e:PHighSmall} does not hold for any $n \leq N_1$.
  In this case we must have
  \begin{align*}
    \norm{f_{n+1}}_{L^2}^2
      &= \norm{\mathcal K_\epsilon \Phigh U_* f_n}_{L^2}^2
	+ \norm{\mathcal K_\epsilon \Plow U_* f_n}_{L^2}^2
    \\
      &\leq (1 - \chi)^2 \norm{\Phigh U_* f_n}_{L^2}^2
	+ \norm{U_* f_n}_{L^2}^2 - \norm{\Phigh U_* f_n}_{L^2}^2 
    \\
      &\leq \paren[\Big]{1 - \frac{\delta^2(1 - (1 - \chi)^2)}{4 (1 - \chi)^2} } \norm{f_n}\,.
  \end{align*}
  Iterating this and using the definition of~$N_1$ immediately implies
  \begin{equation*}
    \norm{f_{N_1}}_{L^2} \leq \delta \norm{f_0}_{L^2}\,,
  \end{equation*}
  and hence~\eqref{e:fNN11} also holds in this case.
  \smallskip

  Thus in either case~\eqref{e:fNN11} holds, and using~\eqref{e:TDisTStar} this implies~\eqref{e:tDisStarDelta} as claimed.
  By choice of~$N$ and~$N_1$ this proves the upper bound in~\eqref{e:plDisTime} as claimed.
\end{proof}

\subsection{Mixing low frequency data (Lemma \ref{l:LowFreqDecay}).}\label{s:LowFreqDecay}

Note if~\eqref{e:GradFLpSmall} holds, then we can approximate~$f$ by a function that is piecewise constant on small cylinder sets. 
For such functions we may use Lemma~\ref{l:pcMix} (or Corollary~\ref{c:TStarUstar}) to show that~$T_*^n f$ is close to~$U_*^n f$.
Of course, if~$f \in \dot L^2$ is in the form~\eqref{e:f01}, and~$N \geq_{s \in \mathcal S} \max \abs{s}$, then
\begin{equation*}
  U_*^N f = \sum_{s \in \mathcal S} c_0(s) \pi(\cyl_s) = \int_{\T^d} f \, d\pi = 0\,.
\end{equation*}
so the bound~\eqref{e:fullCylLpEstimate2} in Corollary~\ref{c:TStarUstar} will ensure~$\norm{T_*^N f_0}_{L^2}$ is small.
This is the strategy we will use to prove Lemma~\ref{l:LowFreqDecay}.

\begin{proof}[Proof of Lemma~\ref{l:LowFreqDecay}]
  Let~$\Lambda = \Lambda_{p, \delta/4}$ be defined by~\eqref{e:LambdaPDelta}, and~$\mathcal S = \mathcal S_{\epsilon, \delta}$ be as in~\eqref{e:SepDelta}.
  Recall, for every~$s \in \mathcal S$ the side lengths~$\ell_s$ are bounded by~\eqref{e:LsBounds}.
  Moreover since $p_{\min}^{1/d} \ell{\sigma s} \leq \ell_{s} \leq p_{\max}^{1/d} \ell_{\sigma s}$ we must have
  \begin{equation}\label{e:tupleLenBounds}
    \frac{d \ln (\epsilon \Lambda_{p, \delta/4} )}{\ln p_{\min} }\le \abs{s} \le   \frac{d \ln (\epsilon \Lambda_{p, \delta/4})}{\ln p_{\max} }   \quad\text{for all } s \in \mathcal S\,.
  \end{equation}
  Choose
  \begin{equation*}
    N \defeq \max_{s \in \mathcal S} \abs{s} \leq \frac{d \ln (\epsilon \Lambda_{p, \delta/4})}{\ln p_{\max} }\,.
  \end{equation*}

  Now let~$B_{p, \delta}$ be a constant that will be chosen shortly, and suppose~$f \in \dot L^p$ satisfies~\eqref{e:GradFLpSmall}.
  Define
  \begin{equation*}
    c_s = \frac{1}{\pi(\mathcal C_s)} \int_{\cyl_s} f \, d\pi\,,
    \quad
    f_0 = \sum_{s \in \mathcal S} c_s \one_{\cyl_s} \,,
    \quad
    \tilde f_0 = f - f_0\,.
  \end{equation*}
  Since $U_*^N f_0 = 0$ (by choice of~$N$), Corollary~\ref{c:TStarUstar} implies
  \begin{equation*}
    \norm{T_*^N f_0}_{L^p}
      = \norm{T_*^N f_0 - U_*^N f_0}_{L^p}
      \leq \frac{\delta}{4} \norm{f_0}_{L^p}
      \leq \frac{\delta}{4} \norm{f}_{L^p}\,.
  \end{equation*}

  To bound~$T_*^N \tilde f_0$ observe that the Poincar\'e inequality,
  \begin{align*}
    \norm{\tilde f_0}_{L^p}^p
      &= \norm[\Big]{f - \sum_{s \in \mathcal S} c_s \one_{\cyl_s} }_{L^p}^p
      = \sum_{s \in \mathcal S} \norm{f - c_s}_{L^p(\cyl_s)}^p
    \\
      &\leq \diam(\cyl_s)^p \sum_{s \in \mathcal S} \norm{\grad f}_{L^p(\cyl_s)}^p
      \leq (\epsilon \Lambda_{p, \delta/4} \sqrt{d} )^p \norm{\grad f}_{L^p}^p\,.
  \end{align*}
  Thus if we choose
  \begin{equation*}
    B_{p, \delta} \defeq \frac{\delta}{4 \Lambda_{p, \delta/4} \sqrt{d}}\,,
  \end{equation*}
  the assumption~\eqref{e:GradFLpSmall} will imply
  \begin{equation*}
    \norm{T_*^N \tilde f_0}_{L^p}
      \leq \norm{\tilde f_0}_{L^p}
      \leq \frac{\delta}{4} \norm{f}_{L^p}\,.
  \end{equation*}

  Consequently, for any~$n \geq N$,
  \begin{equation*}
    \norm{T_*^n f}_{L^p}
      \leq \norm{T_*^N f}_{L^p}
      \leq \norm{T_*^N f_0}_{L^p}
	+ \norm{T_*^N \tilde f_0}_{L^p}
      \leq \frac{\delta}{2}\,,
  \end{equation*}
  concluding the proof.
\end{proof}

\subsection{Lower bounds on the dissipation time.}
The strategy to prove the lower bound in Theorem~\ref{t:pl-dis-time} is similar to the strategy used to prove Theorem~\ref{t:TMixLower}.
The main difference is that for Theorem~\ref{t:TMixLower} we could choose our initial distribution to be uniformly distributed on a small cylinder set.
We can not do this for Theorem~\ref{t:pl-dis-time}.
If we do, one iteration of~$T_*$ will spread it on a larger cylinder set,
and decreases the~$L^p$ norm by a constant factor for every $p \in (1, \infty)$.
Thus our initial distribution has to be spread over many small cylinder sets.
This now requires us to control the smallest cylinder set  our initial data is supported on, and results in our bounds being of order $d \abs{\ln \epsilon} / p_{\min}$, and not $d \abs{\ln \epsilon} / p_{\max}$ as was the case with Theorem~\ref{t:TMixUpper}.

\begin{proof}[Proof of the lower bounds in Theorem~\ref{t:pl-dis-time}]
  Let $\delta' \in (0, 1)$ be a small number that will be chosen shortly, and \(\mathcal S\) be the same partition that was used in the proof of Lemma~\ref{l:LowFreqDecay} with \(\Lambda = \Lambda_{2, \delta'}\), and set \(N= \min_{s \in \mathcal S} \abs{s}\).
  By~\eqref{e:tupleLenBounds}, we know that 
  \[
    N 
    \ge \dfrac{d \ln (\Lambda \epsilon)}{\ln p_{\min}}
    = \dfrac{d \ln ( \epsilon)}{\ln p_{\min}} + \dfrac{d \ln ( \Lambda)}{\ln p_{\min}}
    = \dfrac{d \ln ( \epsilon)}{\ln p_{\min}} - C (1 - \ln \delta' ) \,,
  \]
  for some explicit constant \(C > 0 \) that can be computed from~\eqref{e:LambdaPDelta}.

  Consider the function given by
  \[
    f_0(x) = \begin{cases}
      1 & \varphi^{N}(x) \in E_1 \\
      p_1/p_2 & \varphi^{N}(x) \in E_2 \\
      0 & \text{otherwise}
    \end{cases}\,,
  \]
  which is of the form~\eqref{e:f01} by our choice of \(N\).
  Note that
  \[
    U_\ast^{N-1}f_0(x) = \begin{cases}
      1 & \varphi(x) \in E_1 \\
      p_1/p_2 & \varphi(x) \in E_2 \\
      0 & \text{otherwise}
    \end{cases}\,.
  \]
  Thus \(f_0\) and \(U_\ast^{N-1}f_0\) have the same distribution function, and hence must have the same \(L^2\) norm.
  Applying Corollary~\ref{c:TStarUstar}, we immediately see that
  \begin{align*}
    \norm{T_\ast^N f_0}_{L^2} 
	&\ge \norm{U_\ast^N f_0}_{L^2} - \norm{T_\ast^N f_0 -U_\ast^N f_0}_{L^2}
	\ge \paren{ 1 - \delta'} \norm{f_0}_{L^2}
	= \delta \norm{f_0}_{L^2} \,,
  \end{align*}
  provided $\delta' = 1 - \delta \in (0, 1)$.
  Using~\eqref{e:TDisTStar}, this implies
  \begin{equation}
    \tdis(\delta) \geq N
    \,.
  \end{equation}
  proving~\eqref{e:plDisTime} as desired.
\end{proof}

\appendix
\section{Double exponential convergence when \texorpdfstring{$\varphi$}{phi} is uniformly expanding, or an ergodic toral automorphism.}\label{s:dexp}

Throughout this appendix we will assume~$K_\epsilon$ is the periodized rescaled standard Gaussian.
That is, we assume~$\check K$ is a standard Gaussian and~$K_\epsilon$ is defined by~\eqref{e:KepsPeriodized}.
At the expense of a more technical proof, similar results can be obtained using only spectral assumptions on~$K_\epsilon$; we refer to~\cite{FannjiangNonnenmacherEA04} for details.

\begin{proposition}\label{p:UnifExpanding}
  Let $N \in \set{ 2, 3, \dots}$, and suppose~$\varphi \colon \T^d \to \T^d$ is the uniformly expanding map
  \begin{equation}\label{e:UnifExpanding}
    \varphi(x) = N x \pmod{\Z^d}\,.
  \end{equation}
  Then there exists a constant~$C$ such that for all~$n \in \N$ and~$\epsilon > 0 $ sufficiently small we have
  \begin{gather}\label{e:TVDexp}
    \norm{\dist(X_{n+1}) - \pi }_\TV \leq \frac{C}{\epsilon^{d/2}}
      \exp\paren[\Big]{\frac{- \epsilon^2 N^{2n}}{C}}
  \end{gather}
  Moreover, if~$T_*$ and~$T$ are the evolution operators defined in~\eqref{e:TStarDef} and~\eqref{e:TDef} respectively, then
  \begin{equation}\label{e:L2Dexp}
    \norm{T^n}_{\dot L^2 \to \dot L^2}
      = \norm{T_*^n}_{\dot L^2 \to \dot L^2}
      \leq\exp\paren[\Big]{ - \frac{\epsilon^2 N^{2n}}{C} }\,,
  \end{equation}
\end{proposition}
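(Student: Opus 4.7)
My proof plan is to work entirely in Fourier on $\T^d$. The key observation is that for the linear expanding map $\varphi(x) = Nx$, both building blocks of $T_*$ act diagonally (or semi-diagonally) on the Fourier basis. Concretely, since $Uf(x) = f(\varphi(x))$ sends $e^{2\pi i k \cdot x}$ to $e^{2\pi i (Nk)\cdot x}$, its adjoint $U_*$ satisfies
\begin{equation*}
  \widehat{U_*f}(k) = \ip{f, U e^{2\pi i k \cdot y}} = \hat f(Nk),
\end{equation*}
while convolution with $K_\epsilon$ multiplies by $\hat K_\epsilon(k) = e^{-2\pi^2 \epsilon^2 |k|^2}$ (this is the Fourier coefficient of the periodized standard Gaussian). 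Composing, $\widehat{T_*f}(k) = \hat K_\epsilon(k)\hat f(Nk)$.

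Iterating gives the explicit formula
\begin{equation*}
  \widehat{T_*^n f}(k)
    = \Bigl(\prod_{j=0}^{n-1}\hat K_\epsilon(N^j k)\Bigr)\hat f(N^n k)
    = \exp\Bigl(-2\pi^2 \epsilon^2 |k|^2 \,\tfrac{N^{2n}-1}{N^2-1}\Bigr)\hat f(N^n k).
\end{equation*}
For $f \in \dot L^2$ we have $\hat f(0)=0$, and for every $k \in \Z^d \setminus\{0\}$ we have $|k|^2 \geq 1$ and $N^n k \neq 0$. Plancherel then yields
\begin{equation*}
  \norm{T_*^n f}_{L^2}^2
    \leq \exp\Bigl(-4\pi^2 \epsilon^2 \tfrac{N^{2n}-1}{N^2-1}\Bigr)\norm{f}_{L^2}^2,
\end{equation*}
which is~\eqref{e:L2Dexp} after absorbing the finite factor $(4\pi^2)/(N^2-1)$ into $C$. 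The equality $\norm{T^n}_{\dot L^2 \to \dot L^2} = \norm{T_*^n}_{\dot L^2 \to \dot L^2}$ is immediate from the adjoint relation~\eqref{e:TTstarDual}, so the $L^2$ bound holds for both.

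For the total variation bound~\eqref{e:TVDexp}, I will first regularize in one step. Given an arbitrary initial distribution $\mu$, the density $f_1 \defeq T_*\mu$ equals $K_\epsilon * U_*\mu$. Since $U_*\mu - \pi$ is a signed measure of total mass at most $2$, Young's inequality gives
\begin{equation*}
  \norm{f_1 - 1}_{L^2} \leq 2\norm{K_\epsilon}_{L^2} \leq \frac{C}{\epsilon^{d/2}},
\end{equation*}
using the standard estimate $\norm{K_\epsilon}_{L^2} \lesssim \epsilon^{-d/2}$ for the periodized rescaled Gaussian. Because $f_1 - 1 \in \dot L^2$, applying the $L^2$ bound to $T_*^n(f_1 - 1)$ and then using $\norm{\cdot}_{L^1} \leq \norm{\cdot}_{L^2}$ on $\T^d$ (together with the normalization~\eqref{e:TVdef}) yields~\eqref{e:TVDexp}.

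The only real subtlety is the Fourier identity $\widehat{U_*f}(k) = \hat f(Nk)$: it is cleanest to derive it from the adjoint relation $U_* = U^*$ rather than from the explicit preimage-sum formula for $U_*$, since otherwise one has to unwind the $N^d$-fold branching of $\varphi^{-1}$. Everything else is bookkeeping of constants.
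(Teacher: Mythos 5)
Your proof is correct and takes essentially the same route as the paper: diagonalize in Fourier (you work with $T_*$, the paper with $T$, but these are adjoints so the computation is identical), apply Plancherel for the $\dot L^2$ bound, then regularize one step via Young's inequality and use $L^1 \le L^2$ for the total variation bound. As a minor aside, your geometric-series denominator $N^2-1$ is the correct one; the paper writes $N-1$, a harmless slip since it is absorbed into the constant $C$.
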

As an immediate corollary, we obtain bounds on the mixing time and dissipation time.
\begin{corollary}
  For the uniformly expanding map~\eqref{e:UnifExpanding}, there exists an explicit constant~$C = C(N, d)$ such that the mixing time and dissipation time are bounded by
  \begin{gather}
    \label{e:TMixUnif}
    \tmix(\delta) \leq
      \abs{\log_N \epsilon}
      + \frac{1}{2} \log_N\paren[\Big]{  \frac{d}{2} \abs{\ln \epsilon} + \abs{\ln \delta} + C}
      + C
  \\
    \label{e:TDisUnif}
    \tdis(\delta) \leq \abs{\log_N \epsilon} + \frac{1}{2} \log_N \abs{\ln \delta} + C
  \end{gather}
\end{corollary}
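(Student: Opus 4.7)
The plan is to directly invert the double-exponential bounds~\eqref{e:TVDexp} and~\eqref{e:L2Dexp} from Proposition~\ref{p:UnifExpanding} and read off the smallest~$n$ at which each bound falls below the desired threshold. Neither step requires any new input beyond the proposition; the entire content of the corollary is bookkeeping, with the only nontrivial feature being that taking $\log_N$ of $N^{2n} \geq C \epsilon^{-2} \cdot (\text{something})$ produces the familiar $|\log_N \epsilon|$ term plus a subdominant $\log\log$ correction.

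For the mixing time bound~\eqref{e:TMixUnif}, I will require the right-hand side of~\eqref{e:TVDexp} to be at most~$\delta$. Rearranging gives
\begin{equation*}
  N^{2n} \;\geq\; \frac{C}{\epsilon^2}\paren[\Big]{ \ln C + \frac{d}{2}\abs{\ln \epsilon} + \abs{\ln \delta} }\,,
\end{equation*}
and applying $\tfrac{1}{2}\log_N$ to both sides yields a lower bound on~$n$ of exactly the form stated in~\eqref{e:TMixUnif}, once the constant $\ln C$ inside the logarithm is absorbed into the additive~$+C$ at the end and the shift from $X_{n+1}$ to $X_n$ is absorbed into the same constant. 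For the dissipation time bound~\eqref{e:TDisUnif}, I will use the equivalence~\eqref{e:TDisTStar} between $\tdis(\delta)$ and the smallest $n$ for which $\norm{T_\ast^n}_{\dot L^2 \to \dot L^2}\leq \delta$, and then invert~\eqref{e:L2Dexp} analogously: the inequality $\exp(-\epsilon^2 N^{2n}/C) \leq \delta$ rearranges to $N^{2n}\geq C \epsilon^{-2} \abs{\ln \delta}$, and again taking $\tfrac{1}{2}\log_N$ produces~\eqref{e:TDisUnif}.

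There is no real obstacle; the only care required is tracking how the various multiplicative and additive constants depend on $N$ and $d$ rather than on $\epsilon$ or $\delta$, and noting that the $\epsilon^{-d/2}$ prefactor in~\eqref{e:TVDexp} is what produces the extra $\tfrac{d}{2}\abs{\ln \epsilon}$ term inside the $\log_N$ in~\eqref{e:TMixUnif}, a term which is absent in~\eqref{e:TDisUnif} precisely because the $L^2$ bound~\eqref{e:L2Dexp} has no such prefactor.
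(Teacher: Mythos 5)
Your proposal is correct and is essentially the argument the paper has in mind: the corollary is presented as an "immediate corollary" of Proposition~\ref{p:UnifExpanding} with no separate proof, and the intended argument is precisely the direct inversion of~\eqref{e:TVDexp} and~\eqref{e:L2Dexp} that you carry out, with the $\epsilon^{-d/2}$ prefactor producing the extra $\tfrac{d}{2}\abs{\ln\epsilon}$ term inside the $\log_N$ in~\eqref{e:TMixUnif} and the $n+1 \to n$ shift and $\log_N C$ terms absorbed into the additive constant. One tiny streamlining: you don't actually need~\eqref{e:TDisTStar} for~\eqref{e:TDisUnif}, since~\eqref{e:L2Dexp} already bounds $\norm{T^n}_{\dot L^2 \to \dot L^2}$ directly, which is what the definition of $\tdis$ uses.
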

\begin{remark}
  The uniformly expanding map~\eqref{e:UnifExpanding} is of the form considered in Section~\ref{s:Phi}.
  Indeed, for the uniformly expanding map,~$M = N^d$, and each~$E_i$ is a cube of side length~$1/N$ and $p_{\max} = p_{\min} = 1/N^d$.
  From this we see~\eqref{e:TMixUnif} attains the conjectured upper bound~\eqref{e:TMixDeltaSharpUpper} (with an explicit~$\delta$ dependence), and~\eqref{e:TDisUnif} improves the~$\delta$ dependence in~\eqref{e:plDisTime} by replacing $\abs{\ln \delta} / \delta^2$ with a double logarithm.
\end{remark}
\begin{remark}
  If~$\varphi$ is an ergodic toral automorphism (with matrix~$A$), then the double exponential bounds in Proposition~\ref{p:UnifExpanding} still hold.
  The proof is contained in~\cite{FannjiangWoowski03,FengIyer19}, and the main idea is as follows.
  One can use certain Diophantine approximation results to show that there exists $\lambda > 1$ such that 
  \begin{equation}
    \abs{A_*^{-n} k} \geq \frac{\lambda^n}{C \abs{k}^{d-1}}\,,
    \quad \text{for all}\quad k \in \Z^d - \set{0}\,.
  \end{equation}
  Here~$A_*$ is the transpose of~$A$, and $C$ is a dimensional constant.
  (The proof of this follows from Lemma 4.4 in~\cite{FengIyer19}, and is the lower bound of $|B^n k|$ in the proof of Proposition 4.1.)
  Once this is established, we can follow the proof of Proposition~\ref{p:UnifExpanding}.
  (The details are carried out in the proof of Theorem~2.12 in~\cite{FengIyer19}, and a slightly different proof is in~\cite{FannjiangWoowski03}.)
\end{remark}

\begin{proof}[Proof of Proposition~\ref{p:UnifExpanding}]
  Note first for any function~$g$ we have
  \begin{equation}
    Tg = (K_\epsilon * g) \circ \varphi\,,
  \end{equation}
  from which the Fourier coefficients of~$Tg$ can be computed explicitly.
  Namely, if $k' \notin N \Z^d$, then $(Tg)^\wedge(k') = 0$, and otherwise
  \begin{equation}\label{e:TgHatNk}
    (Tg)^\wedge( N k ) = \hat K_\epsilon(k) \hat g(k)
    \,.
  \end{equation}
  Since~$K_\epsilon$ is a periodized rescaled Gaussian, we know
  \begin{equation}
    \hat K_\epsilon(k) = e^{ -2\pi^2 \epsilon^2 \abs{k}^2 }\,.
  \end{equation}
  Thus iterating~\eqref{e:TgHatNk} gives
  \begin{equation}\label{e:TgHatNkIterated}
    (T^n g)^\wedge(N^n k)
      = \exp\paren[\Big]{ - 2\pi^2 \epsilon^2 \paren[\Big]{ \frac{N^{2n} - 1}{N - 1} } \abs{k}^2 }
	\hat g(k)\,,
  \end{equation}
  and $(T^n g)^\wedge(k') = 0$ when $k' \notin N^n \Z$.

  Now if~$g \in \dot L^2$, then $\hat g(0) = 0$ and Parseval's identity implies
  \begin{align}
    \norm{T^n g}_{L^2}
      &\leq
	\norm{g}_{L^2} \sup_{k \neq 0} 
	  \exp\paren[\Big]{
	    - 2\pi^2 \epsilon^2 \paren[\Big]{ \frac{N^{2n} - 1}{N - 1} } \abs{k}^2
	  }
    \\
      &\leq
	\norm{g}_{L^2} \exp\paren[\Big]{- 2\pi^2 \epsilon^2 \paren[\Big]{ \frac{N^{2n} - 1}{N - 1} } }\,.
  \end{align}
  Since~$\norm{T^n}_{\dot L^2 \to \dot L^2} = \norm{T_*^n}_{\dot L^2 \to \dot L^2}$, this implies~\eqref{e:L2Dexp}.
  \medskip

  To prove~\eqref{e:TVDexp}, suppose~$X_0 \sim \mu$ and observe
  \begin{align}
    \norm{\dist(X_{n+1}) - \pi}_\TV
      &= \frac{\norm{T_*^{n+1} \mu - 1}_{L^1}}{2}
      \leq \frac{\norm{T_*^{n+1} \mu - 1}_{L^2}}{2}
  \\
      &\leq \frac{\norm{T_*^n}_{L^2 \to L^2}  \norm{T_* \mu - 1}_{L^2}}{2}
  \\
      &\leq \norm{K_\epsilon}_{L^2} \exp\paren[\Big]{ -\frac{\epsilon^2 N^{2n}}{C} }
    \leq \frac{C}{\epsilon^{d/2}} \exp\paren[\Big]{ -\frac{\epsilon^2 N^{2n}}{C} }\,,
  \end{align}
  concluding the proof.
\end{proof}

\section*{Acknowledgements}
We gratefully acknowledge helpful discussions with Albert Fannjiang.

\bibliographystyle{halpha-abbrv}
\bibliography{refs,preprints}

\begin{thebibliography}{CCZW21}
\expandafter\ifx\csname url\endcsname\relax
  \def\url#1{\texttt{#1}}\fi
\expandafter\ifx\csname doi\endcsname\relax
  \def\doi#1{\burlalt{doi:#1}{http://dx.doi.org/#1}}\fi
\expandafter\ifx\csname urlprefix\endcsname\relax\def\urlprefix{URL }\fi
\expandafter\ifx\csname href\endcsname\relax
  \def\href#1#2{#2}\fi
\expandafter\ifx\csname burlalt\endcsname\relax
  \def\burlalt#1#2{\href{#2}{#1}}\fi

\bibitem[AFP00]{AmbrosioFuscoEA00}
L.~Ambrosio, N.~Fusco, and D.~Pallara.
\newblock {\em Functions of bounded variation and free discontinuity problems}.
\newblock Oxford Mathematical Monographs. The Clarendon Press, Oxford
  University Press, New York, 2000.

\bibitem[BBPS21]{BedrossianBlumenthalEA21}
J.~Bedrossian, A.~Blumenthal, and S.~Punshon-Smith.
\newblock Almost-sure enhanced dissipation and uniform-in-diffusivity
  exponential mixing for advection-diffusion by stochastic {N}avier-{S}tokes.
\newblock {\em Probab. Theory Related Fields}, 179(3-4):777--834, 2021.
\newblock \doi{10.1007/s00440-020-01010-8}.

\bibitem[BCZ17]{BedrossianCotiZelati17}
J.~Bedrossian and M.~Coti~Zelati.
\newblock Enhanced dissipation, hypoellipticity, and anomalous small noise
  inviscid limits in shear flows.
\newblock {\em Arch. Ration. Mech. Anal.}, 224(3):1161--1204, 2017.
\newblock \doi{10.1007/s00205-017-1099-y}.

\bibitem[BHP17]{BasuHermonEA17}
R.~Basu, J.~Hermon, and Y.~Peres.
\newblock Characterization of cutoff for reversible {M}arkov chains.
\newblock {\em Ann. Probab.}, 45(3):1448--1487, 2017.
\newblock \doi{10.1214/16-AOP1090}.

\bibitem[CCZW21]{ColomboCotiZelatiEA21}
M.~Colombo, M.~Coti~Zelati, and K.~Widmayer.
\newblock Mixing and diffusion for rough shear flows.
\newblock 2021.
\newblock \doi{10.15781/83FC-J334}.

\bibitem[CD20]{ChatterjeeDiaconis20}
S.~Chatterjee and P.~Diaconis.
\newblock Speeding up {M}arkov chains with deterministic jumps.
\newblock {\em Probab. Theory Related Fields}, 178(3-4):1193--1214, 2020.
\newblock \doi{10.1007/s00440-020-01006-4}.

\bibitem[CDG87]{ChungDiaconisEA87}
F.~R.~K. Chung, P.~Diaconis, and R.~L. Graham.
\newblock Random walks arising in random number generation.
\newblock {\em Ann. Probab.}, 15(3):1148--1165, 1987.
\newblock
  \urlprefix\url{http://links.jstor.org/sici?sici=0091-1798(198707)15:3<1148:RWAIRN>2.0.CO;2-6&origin=MSN}.

\bibitem[CK22]{ConchonKerjan22}
G.~Conchon-Kerjan.
\newblock Cutoff for random lifts of weighted graphs.
\newblock {\em Ann. Probab.}, 50(1):304--338, 2022.
\newblock \doi{10.1214/21-aop1534}.

\bibitem[CKRZ08]{ConstantinKiselevEA08}
P.~Constantin, A.~Kiselev, L.~Ryzhik, and A.~Zlato{\v{s}}.
\newblock Diffusion and mixing in fluid flow.
\newblock {\em Ann. of Math. (2)}, 168(2):643--674, 2008.
\newblock \doi{10.4007/annals.2008.168.643}.

\bibitem[CLP99]{ChenLovaszEA99}
F.~Chen, L.~Lov\'{a}sz, and I.~Pak.
\newblock Lifting {M}arkov chains to speed up mixing.
\newblock In {\em Annual {ACM} {S}ymposium on {T}heory of {C}omputing
  ({A}tlanta, {GA}, 1999)}, pages 275--281. ACM, New York, 1999.
\newblock \doi{10.1145/301250.301315}.

\bibitem[Cor51]{Corrsin51}
S.~Corrsin.
\newblock On the spectrum of isotropic temperature fluctuations in an isotropic
  turbulence.
\newblock {\em J. Appl. Phys.}, 22:469--473, 1951.

\bibitem[CZDE20]{CotiZelatiDelgadinoEA20}
M.~Coti~Zelati, M.~G. Delgadino, and T.~M. Elgindi.
\newblock On the relation between enhanced dissipation timescales and mixing
  rates.
\newblock {\em Comm. Pure Appl. Math.}, 73(6):1205--1244, 2020.
\newblock \doi{10.1002/cpa.21831}.

\bibitem[DG92]{DiaconisGraham92}
P.~Diaconis and R.~Graham.
\newblock An affine walk on the hypercube.
\newblock {\em J. Comput. Appl. Math.}, 41(1-2):215--235, 1992.
\newblock \doi{10.1016/0377-0427(92)90251-R}.
\newblock Asymptotic methods in analysis and combinatorics.

\bibitem[DHN00]{DiaconisHolmesEA00}
P.~Diaconis, S.~Holmes, and R.~M. Neal.
\newblock Analysis of a nonreversible {M}arkov chain sampler.
\newblock {\em Ann. Appl. Probab.}, 10(3):726--752, 2000.
\newblock \doi{10.1214/aoap/1019487508}.

\bibitem[Dia96]{Diaconis96}
P.~Diaconis.
\newblock The cutoff phenomenon in finite {M}arkov chains.
\newblock {\em Proc. Nat. Acad. Sci. U.S.A.}, 93(4):1659--1664, 1996.
\newblock \doi{10.1073/pnas.93.4.1659}.

\bibitem[Dia13]{Diaconis13}
P.~Diaconis.
\newblock Some things we've learned (about {M}arkov chain {M}onte {C}arlo).
\newblock {\em Bernoulli}, 19(4):1294--1305, 2013.
\newblock \doi{10.3150/12-BEJSP09}.

\bibitem[DKRH21]{DolgopyatKanigowskiEA21}
D.~Dolgopyat, A.~Kanigowski, and F.~Rodriguez-Hertz.
\newblock Exponential mixing implies bernoulli, 2021.
\newblock \doi{10.48550/ARXIV.2106.03147}.

\bibitem[EZ19]{ElgindiZlatos19}
T.~M. Elgindi and A.~Zlato\v{s}.
\newblock Universal mixers in all dimensions.
\newblock {\em Adv. Math.}, 356:106807, 33, 2019.
\newblock \doi{10.1016/j.aim.2019.106807}.

\bibitem[FFIT20]{FengFengEA20}
Y.~Feng, Y.~Feng, G.~Iyer, and J.-L. Thiffeault.
\newblock Phase separation in the advective {C}ahn-{H}illiard equation.
\newblock {\em J. Nonlinear Sci.}, 30(6):2821--2845, 2020.
\newblock \doi{10.1007/s00332-020-09637-6}.

\bibitem[FI19]{FengIyer19}
Y.~Feng and G.~Iyer.
\newblock Dissipation enhancement by mixing.
\newblock {\em Nonlinearity}, 32(5):1810--1851, 2019.
\newblock \doi{10.1088/1361-6544/ab0e56}.

\bibitem[FM22]{FengMazzucato22}
Y.~Feng and A.~L. Mazzucato.
\newblock Global existence for the two-dimensional {K}uramoto-{S}ivashinsky
  equation with advection.
\newblock {\em Comm. Partial Differential Equations}, 47(2):279--306, 2022.
\newblock \doi{10.1080/03605302.2021.1975131}.

\bibitem[FNW04]{FannjiangNonnenmacherEA04}
A.~Fannjiang, S.~Nonnenmacher, and L.~{Wo\l owski}.
\newblock Dissipation time and decay of correlations.
\newblock {\em Nonlinearity}, 17(4):1481--1508, 2004.
\newblock \doi{10.1088/0951-7715/17/4/018}.

\bibitem[FNW06]{FannjiangNonnenmacherEA06}
A.~Fannjiang, S.~Nonnenmacher, and L.~{Wo\l owski}.
\newblock Relaxation time of quantized toral maps.
\newblock {\em Ann. Henri Poincar\'e}, 7(1):161--198, 2006.
\newblock \doi{10.1007/s00023-005-0246-4}.

\bibitem[FW03]{FannjiangWoowski03}
A.~Fannjiang and L.~{Wo\l owski}.
\newblock Noise induced dissipation in {L}ebesgue-measure preserving maps on
  {$d$}-dimensional torus.
\newblock {\em J. Statist. Phys.}, 113(1-2):335--378, 2003.
\newblock \doi{10.1023/A:1025787124437}.

\bibitem[HV05]{HaynesVanneste05}
P.~H. Haynes and J.~Vanneste.
\newblock What controls the decay of passive scalars in smooth flows?
\newblock {\em Physics of Fluids}, 17(9):097103, 2005.
\newblock \doi{10.1063/1.2033908}.

\bibitem[IXZ21]{IyerXuEA21}
G.~Iyer, X.~Xu, and A.~Zlato\v{s}.
\newblock Convection-induced singularity suppression in the {K}eller-{S}egel
  and other non-linear {PDE}s.
\newblock {\em Trans. Amer. Math. Soc.}, 374(9):6039--6058, 2021.
\newblock \doi{10.1090/tran/8195}.

\bibitem[IZ22]{IyerZhou22}
G.~Iyer and H.~Zhou.
\newblock Quantifying the dissipation enhancement of cellular flows, 2022.
\newblock \doi{10.48550/ARXIV.2209.11645}.

\bibitem[KH95]{KatokHasselblatt95}
A.~Katok and B.~Hasselblatt.
\newblock {\em Introduction to the modern theory of dynamical systems},
  volume~54 of {\em Encyclopedia of Mathematics and its Applications}.
\newblock Cambridge University Press, Cambridge, 1995.
\newblock \doi{10.1017/CBO9780511809187}.
\newblock With a supplementary chapter by Katok and Leonardo Mendoza.

\bibitem[KK17]{KapferKrauth17}
S.~C. Kapfer and W.~Krauth.
\newblock Irreversible local markov chains with rapid convergence towards
  equilibrium.
\newblock {\em Phys. Rev. Lett.}, 119:240603, Dec 2017.
\newblock \doi{10.1103/PhysRevLett.119.240603}.

\bibitem[LPW09]{LevinPeresEA09}
D.~A. Levin, Y.~Peres, and E.~L. Wilmer.
\newblock {\em Markov chains and mixing times}.
\newblock American Mathematical Society, Providence, RI, 2009.
\newblock \doi{10.1090/mbk/058}.
\newblock With a chapter by James G. Propp and David B. Wilson.

\bibitem[MD18]{MilesDoering18}
C.~J. Miles and C.~R. Doering.
\newblock Diffusion-limited mixing by incompressible flows.
\newblock {\em Nonlinearity}, 31(5):2346, 2018.
\newblock \doi{10.1088/1361-6544/aab1c8}.

\bibitem[Nea04]{Neal04}
R.~M. Neal.
\newblock Improving asymptotic variance of mcmc estimators: Non-reversible
  chains are better, 2004,
  \burlalt{math/0407281}{http://arxiv.org/abs/math/0407281}.

\bibitem[Obu49]{Obukhov49}
A.~M. Obukhov.
\newblock Structure of temperature field in turbulent flow.
\newblock {\em Izv. Akad. Nauk. SSSR, Geogr. Geofiz}, 13, 1949.

\bibitem[OTD21]{OakleyThiffeaultEA21}
B.~W. Oakley, J.-L. Thiffeault, and C.~R. Doering.
\newblock On mix-norms and the rate of decay of correlations.
\newblock {\em Nonlinearity}, 34(6):3762--3782, 2021.
\newblock \doi{10.1088/1361-6544/abdbbd}.

\bibitem[Pie94]{Pierrehumbert94}
R.~T. Pierrehumbert.
\newblock Tracer microstructure in the large-eddy dominated regime.
\newblock {\em Chaos, Solitons \& Fractals}, 4(6):1091--1110, 1994.

\bibitem[Sei22]{Seis22}
C.~Seis.
\newblock Bounds on the rate of enhanced dissipation.
\newblock {\em Comm. Math. Phys.}, 2022.
\newblock \doi{10.1007/s00220-022-04588-3}.

\bibitem[SOW06]{SturmanOttinoEA06}
R.~Sturman, J.~M. Ottino, and S.~Wiggins.
\newblock {\em The mathematical foundations of mixing}, volume~22 of {\em
  Cambridge Monographs on Applied and Computational Mathematics}.
\newblock Cambridge University Press, Cambridge, 2006.
\newblock \doi{10.1017/CBO9780511618116}.
\newblock The linked twist map as a paradigm in applications: micro to macro,
  fluids to solids.

\bibitem[SS00]{ShraimanSiggia00}
B.~I. Shraiman and E.~D. Siggia.
\newblock Scalar turbulence.
\newblock {\em Nature}, 405(6787):639, 2000.
\newblock \doi{10.1038/35015000}.

\bibitem[TC03]{ThiffeaultChildress03}
J.-L. Thiffeault and S.~Childress.
\newblock Chaotic mixing in a torus map.
\newblock {\em Chaos}, 13(2):502--507, 2003.
\newblock \doi{10.1063/1.1568833}.

\bibitem[Thi12]{Thiffeault12}
J.-L. Thiffeault.
\newblock Using multiscale norms to quantify mixing and transport.
\newblock {\em Nonlinearity}, 25(2):R1--R44, 2012.
\newblock \doi{10.1088/0951-7715/25/2/R1}.

\bibitem[Wei19]{Wei19}
D.~Wei.
\newblock Diffusion and mixing in fluid flow via the resolvent estimate.
\newblock {\em Science China Mathematics}, pages 1869--1862, 2019.
\newblock \doi{10.1007/s11425-018-9461-8}.

\bibitem[Zla10]{Zlatos10}
A.~Zlato\v{s}.
\newblock Diffusion in fluid flow: dissipation enhancement by flows in 2{D}.
\newblock {\em Comm. Partial Differential Equations}, 35(3):496--534, 2010.
\newblock \doi{10.1080/03605300903362546}.

\end{thebibliography}
\end{document}